\renewcommand\mathbb\mathbf
\newcommand{\sm}{{\operatorname{sm}}}
\newcommand{\dR}{{\operatorname{dR}}}
\newcommand{\alg}{{\operatorname{alg}}}
\DeclareMathOperator{\GSp}{{GSp}}
\newcommand{\BC}{{\operatorname{BC}}}
\newcommand{\defi}[1]{\emph{#1}}
\newcommand{\To}{\longrightarrow}
\newcommand{\C}{{\mathbb C}}
\newcommand{\F}{{\mathbb F}}
\newcommand{\G}{{\mathbb G}}
\newcommand{\PP}{{\mathbb P}}
\newcommand{\Q}{{\mathbb Q}}
\newcommand{\R}{{\mathbb R}}
\newcommand{\Z}{{\mathbb Z}}
\newcommand{\ksep}{{k^{\operatorname{sep}}}}
\newcommand\ps[1]{{\llbracket#1\rrbracket}}
\newcommand{\pp}{{\mathfrak p}}
\newcommand\td\widetilde
\newcommand\bS{{\mathbb{S}}}
\newcommand\bG{{\mathbb{G}}}
\newcommand{\calP}{{\mathcal P}}
\newcommand\scdot{\,\cdot\,}
\def\presuper#1#2%
\def\Q{\mathbb{Q}}
\def\C{\mathbb{C}}
\def\R{\mathbb{R}}
\def\Z{\mathbb{Z}}
\def\<{\ensuremath{\langle}}
\def\>{\ensuremath{\rangle}}
\newcommand{\inv}{{\operatorname{inv}}}
\DeclareMathOperator{\Lie}{Lie} \DeclareMathOperator{\Hom}{Hom}
\DeclareMathOperator{\Gal}{Gal} 
 \DeclareMathOperator{\Res}{Res}
 \DeclareMathOperator{\divv}{div}
\DeclareMathOperator{\ord}{ord} 
\DeclareMathOperator{\Div}{Div} \DeclareMathOperator{\Pic}{Pic}
\DeclareMathOperator{\Prin}{Prin}
\renewcommand{\div}{\operatorname{div}}
\DeclareMathOperator{\Spec}{Spec}
\DeclareMathOperator{\trop}{trop}
\DeclareMathOperator{\Log}{Log}
\DeclareMathOperator{\val}{val}
\newcommand{\Ab}{{\operatorname{Ab}}}
\newcommand{\an}{{\operatorname{an}}}
\newcommand{\tors}{{\operatorname{tors}}}
\newcommand{\red}{{\operatorname{red}}}
\newcommand\sptxt[1]{\quad\text{ #1 }\quad}
\newcommand{\surjects}{\twoheadrightarrow}
\newcommand{\injects}{\hookrightarrow}
\newcommand{\tensor}{\otimes}
\newcommand\djunion\amalg
\newcommand{\isomto}{\overset{\sim}{\To}}
\numberwithin{equation}{section}
\newtheorem{theorem}[equation]{Theorem}
\newtheorem{lemma}[equation]{Lemma}
\newtheorem{corollary}[equation]{Corollary}
\newtheorem{proposition}[equation]{Proposition}
\theoremstyle{definition}
\newtheorem{definition}[equation]{Definition}
\newtheorem{example}[equation]{Example}
\theoremstyle{remark}
\newtheorem{remark}[equation]{Remark}
\renewcommand{\arraystretch}{1.4}
\newcommand\bb[1]{\ensuremath{{\mathbb{#1}}}}
\newcommand\ms[1]{\ensuremath{{\mathscr{#1}}}}
\newcommand\mf[1]{\ensuremath{{\mathfrak{#1}}}}
\newcommand\fX{{\mf X}}
\newcommand\fU{{\mf U}}
\newcommand\fL{{\mf L}}
\newcommand\sH{{\ms H}}
\newcommand\sO{{\ms O}}
\newcommand\bB{{\bb B}}
\newcommand\sA{{\ms A}}
\newcommand\sM{{\ms M}}
\newcommand\fp{{\mf p}}
\newcommand\Djunion{\coprod}
\newcommand\Dsum{\bigoplus}
\newcommand\angles[1]{\ensuremath{\langle#1\rangle}}
\DeclareSymbolFont{symbolsC}{U}{pxsyc}{m}{n}
\DeclareMathSymbol{\coloneq}{\mathrel}{symbolsC}{66}
\newcommand\BCint{\presuper\BC\int}
\newcommand\Abint{\presuper\Ab\int}
\newcommand{\Supp}{\operatorname{Supp}}
\renewcommand\bb[1]{\ensuremath{{\mathbf{#1}}}}
\renewcommand\G{\bb G}
\renewcommand\R{\bb R}
\renewcommand\C{\bb C}
\renewcommand\Z{\bb Z}
\renewcommand\Q{\bb Q}
\renewcommand\F{\bb F}
\begin{document}

%%%%%%%%%%%%%%%%%%%%%%%%%%%%%%%%%%%%%%%%%%%%%%%%%%%%
%%%%%%%%%%%%%% TITLE %%%%%%%%%%%%%%%%%%%%%%%%%%%%%%%%
%%%%%%%%%%%%%%%%%%%%%%%%%%%%%%%%%%%%%%%%%%%%%%%%%%%%

\title{Uniform bounds for the number of rational points on curves of small Mordell--Weil rank}
\author{Eric Katz}
\address{Department of Mathematics, The Ohio State University, Columbus, Ohio, USA}
\email{katz.60@osu.edu}
\author{Joseph Rabinoff}
\address{Department of Mathematics, Georgia Tech, Atlanta, Georgia, USA}
\email{rabinoff@math.gatech.edu}
\author{David Zureick-Brown}
\address{Department of Mathematics and Computer Science, Emory University,
Atlanta, Georgia, USA}
\email{dzb@mathcs.emory.edu}
\subjclass{}

\date{}
\thanks{}

\begin{abstract}
  Let $X$ be a curve of genus $g\geq 2$ over a number field $F$ of degree
  $d = [F:\Q]$.  The conjectural existence of a uniform bound $N(g,d)$ on the
  number $\#X(F)$ of $F$-rational points of $X$ is an outstanding open problem
  in arithmetic geometry, known by Caporaso, Harris, and Mazur to follow from
  the Bombieri--Lang conjecture.  A related conjecture posits the existence of a
  uniform bound $N_{\tors,\dagger}(g,d)$ on the number of geometric torsion
  points of the Jacobian $J$ of $X$ which lie on the image of $X$ under an
  Abel--Jacobi map.  For fixed $X$ this quantity was conjectured to be finite by
  the Manin--Mumford conjecture, and was proved to be so by
  Raynaud.

  We give an explicit uniform bound on $\#X(F)$ when $X$ has Mordell--Weil rank
  $r\leq g-3$.  This generalizes recent work of Stoll on uniform bounds for
  \emph{hyperelliptic} curves of small rank to arbitrary curves.  Using the same
  techniques, we give an explicit, \emph{unconditional} uniform bound on the
  number of $F$-rational torsion points of $J$ lying on the image of $X$ under
  an Abel--Jacobi map.  We also give an explicit uniform bound on the number of
  geometric torsion points of $J$ lying on $X$ when the reduction type of $X$ is
  highly degenerate.

  Our methods combine Chabauty--Coleman's $p$-adic integration,
  non-Archimedean potential theory on Berkovich curves, and the theory of
  linear systems and divisors on metric graphs.
\end{abstract}

\dedicatory{Dedicated to the memory of Robert Coleman.}

\maketitle

%%%%%%%%%%%%%%%%%%%%%%%%%%%%%%%%%%%%%%%%%%%%%%%%%%%%%%%%
\section{Introduction}
%%%%%%%%%%%%%%%%%%%%%%%%%%%%%%%%%%%%%%%%%%%%%%%%%%%%%%%%

Let $X$ be a curve of genus $g\geq 2$ over a number field $F$ of degree $d = [F:\Q]$.  The conjectural
existence of a uniform bound $N(g,d)$ on
the number $\#X(F)$ of $F$-rational points of $X$ is an outstanding open problem in
arithmetic geometry, known by \cite{CaporasoHM:uniformity} to follow from the
Bombieri--Lang
conjecture.
A related conjecture posits the existence of a uniform bound
$N_{\tors,\dagger}(g,d)$ on the number of geometric torsion points of the
Jacobian of $X$ which lie on the image of $X$ under an Abel--Jacobi map.  For
fixed $X$ this quantity was conjectured to be finite by the Manin--Mumford conjecture, and was
proved to be so by Raynaud~\cite{raynaud:maninMumford}.

In this paper we obtain both kinds of uniform bounds for large classes of curves
where uniformity was previously unknown.  To do so, we combine Chabauty and
Coleman's method of $p$-adic integration, potential theory on
Berkovich curves, and the theory of linear systems and divisors on metric
graphs.  The main theorems are as follows.

\begin{theorem}
\label{T:uniformity-K-points}
Let $d\geq 1$ and $g\geq 3$ be integers.  There exists an explicit constant
$N(g,d)$ such that for any number field $F$ of degree $d$ and any smooth,
proper, geometrically connected genus $g$ curve $X / F$ of Mordell--Weil rank at
most $g-3$, we have
\[
\#X(F) \leq N(g,d).
\]
  
\end{theorem}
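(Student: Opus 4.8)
The plan is to run the Chabauty--Coleman method at a well-chosen prime $\pp$ of $F$ and bound the number of $p$-adic points in each residue disk, then sum over residue disks. Fix a prime $p$ and a place $\pp \mid p$ of $F$ with residue field $\F_{\pp}$, chosen so that $X$ has semistable reduction over the completion $F_{\pp}$ (after a bounded base change, which one absorbs into the constant $N(g,d)$). Since the Mordell--Weil rank $r$ of $J = \Jac(X)$ satisfies $r \leq g-3 < g$, the space of global $1$-forms annihilating $J(F)$ under $p$-adic integration has dimension at least $g - r \geq 3$; call this space $V \subseteq H^0(X_{F_{\pp}}, \Omega^1)$. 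Every point of $X(F)$ is a zero of the locally analytic function $x \mapsto \int_{x_0}^x \omega$ for each $\omega \in V$, so $\#X(F)$ is bounded by the number of common zeros of these integrals.

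Next I would bound the zeros disk-by-disk on the Berkovich analytification $X^{\an}_{F_{\pp}}$. The reduction map sends $X(F_{\pp})$ to the $\F_{\pp}$-points of the special fiber of a semistable model, and the preimage of a smooth point is an open disk while the preimage of a node is an open annulus. On a disk, a standard Newton-polygon argument bounds the number of zeros of $\int \omega$ by $1 + (\text{number of zeros of } \bar\omega \text{ on the component})$, where $\bar\omega$ is the reduction of a suitably scaled $\omega$; summing the zeros of $\bar\omega$ over components of the special fiber gives roughly $2g-2$. The genuinely new input, and the reason the hypothesis $r \leq g-3$ (rather than $r \leq g-1$) appears, is the analysis on the \emph{annuli}: here one uses that $V$ has dimension $\geq 3$ together with non-Archimedean potential theory to control how the integrals behave along an annulus. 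Concretely, the tropicalization / reduction of forms in $V$ gives a subspace of the space of harmonic $1$-forms on the dual metric graph $\Gamma$ of dimension $\geq 3 - (\text{something})$, and one invokes the theory of linear systems and divisors on metric graphs (a Riemann--Roch / Clifford-type bound on $\Gamma$) to bound the number of zeros of these piecewise-linear/harmonic data along the edges of $\Gamma$. This is where the interesting combinatorial-geometric estimate lives.

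Assembling the pieces: the number of residue disks is at most $\#X_{\sm}(\F_{\pp}) \leq \#\PP^1(\F_{\pp}) \cdot(\text{bounded in } g) $, which by Weil is $O(p)$ with an explicit constant depending only on $g$ and $d$; the number of annuli is bounded in terms of the combinatorial type of $\Gamma$, hence by a function of $g$ alone. Choosing $p$ to be the smallest prime of good enough behavior --- its size is bounded explicitly in terms of $g$ and $d$ by effective Chebotarev-type or elementary counting arguments --- one gets an explicit $N(g,d)$.

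The main obstacle is the annulus analysis: on a disk the Newton-polygon bound is classical (this is essentially Coleman's original argument), but an annulus can a priori contain unboundedly many zeros of a single integral $\int\omega$ (the integral can wrap around), so the bound must come from the interaction of the \emph{three-dimensional} family $V$ with the metric-graph structure. Making the potential-theoretic estimate on Berkovich curves precise --- identifying the reductions of $p$-adic integrals with harmonic functions on $\Gamma$ and bounding their zeros via linear systems on $\Gamma$ --- is the technical heart, and is exactly where the gap between Stoll's hyperelliptic case and the general case must be bridged, since for general curves the dual graph is no longer a tree or a chain of loops.
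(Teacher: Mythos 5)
Your overall strategy---Chabauty--Coleman at one place, a decomposition of $X(F_\pp)$ into residue discs and annuli, Newton polygons on discs, and potential theory on Berkovich curves plus linear systems on metric graphs to handle the annuli---is the approach of the paper. But there is a genuine gap in the annulus step, and it is exactly the point that forces the hypothesis $r\leq g-3$. On an annulus in a curve of bad reduction there are \emph{two} distinct integration theories: the Berkovich--Coleman integral, computed by formal antidifferentiation of the Laurent expansion of $\omega$ (this is what Newton polygons and the slope formula control), and the abelian integral coming from the $p$-adic logarithm on $J$ (this is what vanishes on $\iota^{-1}(J(F))$). These do \emph{not} agree on an annulus; they differ by a term $a(\omega)\bigl(\val(y)-\val(x)\bigr)$ governed by the tropical Abel--Jacobi map (Proposition~\ref{prop:correction.annulus}). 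So even after you choose $\omega$ exact on the annulus (residue zero, one linear condition---this is the ``wrapping'' you mention), the zeros of the formal antiderivative $f$ are not the points where the Chabauty integral vanishes. One must impose a second codimension-one condition $a(\omega)=0$, and it is the need to satisfy \emph{two} linear conditions simultaneously with a nonzero $\omega$ that requires $\dim V\geq 3$, i.e.\ $r\leq g-3$. As written, your mechanism accounts for only one condition and would (incorrectly) suggest $r\leq g-2$ suffices; without the comparison of the two integrals the annulus bound does not apply to the points you are counting. Relatedly, your description of the combinatorial input is off: what is used is not a space of harmonic $1$-forms on $\Gamma$ but the fact that $F_0=-\log\|\omega\|$ restricted to $\Gamma$ satisfies $\div(F_0)+K_\Gamma\geq 0$, whence its slopes are bounded by $2g-1$ (Lemma~\ref{lem:unstable.combinatorics}), and a Newton-polygon comparison converts this into a slope bound for $-\log|f|$.

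The final assembly step is also confused. There is no effective bound, in terms of $g$ and $d$ alone, on the smallest prime of good or even semistable reduction, and no Chebotarev argument will produce one; that is precisely why the classical Chabauty--Coleman bound is not uniform. The correct move (Stoll's) is to \emph{fix} a small prime, say $p=2$ or $3$, accept arbitrary reduction type there, and invoke \cite[Proposition~5.3]{Stoll:uniformity}: the minimal regular model over $\sO_{F_\pp}$ yields a covering of $X(F_\pp)$ by an explicitly bounded number of discs and annuli \emph{defined over $F_\pp$}, with the $F_\pp$-points sitting at depth at least $1/e$ inside them (which is what makes the correction factor $N_p(1/e,\cdot)$ finite and explicit). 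Your proposal of base-changing to a semistable model is salvageable for counting geometric torsion (as in Theorem~\ref{thm:geom.torsion.bound}) but is not what one wants here, since after base change you lose control of where the $F$-rational points sit relative to the new residue discs.
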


The Mordell--Weil rank is by definition the rank of the finitely generated
abelian group $J(F)$, where $J$ is the Jacobian of $X$.
Theorem~\ref{T:uniformity-K-points} is an improvement on a theorem of
Stoll~\cite{Stoll:uniformity}, which applies to \emph{hyperelliptic} $X$.  The
methods used to prove Theorem~\ref{T:uniformity-K-points} are largely inspired
by Stoll's ideas.  (See Section~\ref{sec:overview} for a discussion of Stoll's results
and their relation to this paper.)

There are any number of different ways of expressing the bound $N(g,d)$.  For
instance, in the case $F=\Q$, we can take
\[ N(g,1) = 84g^2 - 98g + 28 \]
by applying Theorem~\ref{thm:with.ss.model} with $K = \Q_3$ and by
using~\eqref{eq:nice.NpAB.bound}. 

Next, we define an equivalence relation on the set of $\bar F$-points of a curve
$X/F$ as follows: we say that two points $P,Q$ are equivalent if $mP$ is
linearly equivalent to $mQ$ on $X_{\bar F}$ for some integer $m\geq 1$.  We
define a \defi{torsion packet} to be an equivalence class under this relation.
Equivalently, a torsion packet is the inverse image of the group of geometric
torsion points of the Jacobian $J$ of $X$ under an Abel--Jacobi map
$X_{\bar F}\injects J_{\bar F}$.  Replacing $\bar F$ with $F$, one has a notion of a
\defi{rational torsion packet} as well.  As mentioned above, Raynaud
\cite{raynaud:maninMumford} proved that every torsion packet of a curve is
finite.  Many additional proofs, with an assortment of techniques and
generalizations, were given later by
\cite{buium:p-adic-jets, coleman:ramified_torsion_curves,
  hindry:autour-conjecture-manin-mumford, ullmo:positivite-manin-mumford,
  pilazannier}, and others.
Several of these proofs rely on $p$-adic methods, with the method of Coleman
being particularly closely related to ours.

A \emph{uniform} bound on the size of the torsion packets of a curve of genus
$g\geq 2$ is expected but still conjectural.  We offer two results in this
direction.  The first unconditional result concerns rational torsion packets
and is proved along with Theorem~\ref{T:uniformity-K-points}.  To our knowledge,
no uniformity result was previously known even in this case for general curves
(for hyperelliptic curves it follows from~\cite[Theorem~8.1]{Stoll:uniformity}).

\begin{theorem}
\label{T:uniformity-K-torsion}
Let $d\geq 1$ and $g\geq 3$ be integers.  There exists an explicit constant
$N_{\tors}(g,d)$ such that for any number field $F$ of degree $d$, any
smooth, proper, geometrically connected genus $g$ curve $X/F$, and any
Abel--Jacobi embedding $\iota\colon X \hookrightarrow J$ into its Jacobian
(defined over $F$), we have
\[
\#\iota^{-1}\big(J(F)_{\tors}\big) \leq N_{\tors}(g,d).
\]
\end{theorem}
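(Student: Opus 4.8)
The plan is to deduce the torsion-packet bound from the same $p$-adic machinery that yields Theorem~\ref{T:uniformity-K-points}, exploiting the formal analogy between rational points and torsion points lying on $X$ via the Abel--Jacobi map. First I would pick a prime $p$ of good reduction (or, failing that, work with a suitable semistable model as in Theorem~\ref{thm:with.ss.model}), and replace $F$ by its completion $K$ at a prime above $p$, so that $K/\Q_p$ has bounded degree; this costs nothing since $\iota^{-1}(J(F)_{\tors}) \subseteq \iota^{-1}(J(K)_{\tors})$. The key observation is that Chabauty--Coleman's $p$-adic integration applies verbatim: a regular $1$-form $\omega$ on $X_K$ integrates to a locally analytic function $\int\omega$ on $X(K)$ that vanishes at any point $P$ with $\iota(P)$ torsion, because $\int_{\iota(O)}^{\iota(P)}\omega$ is pulled back from $J$ and the abelian logarithm kills torsion. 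So the torsion packet in $X(K)$ is contained in the common zero locus of $\int\omega$ for all $\omega \in H^0(X_K,\Omega^1)$, a space of dimension $g$ — in sharp contrast to the rational-point case, here we get the \emph{full} space of differentials with no rank hypothesis needed, which is exactly why the result is unconditional and requires no constraint on $r$.

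Next I would run the local analysis residue-disk by residue-disk. In a residue disk where some $\int\omega$ has a simple zero the disk contributes at most one torsion point by the usual Newton-polygon/Weierstrass-preparation argument; the genuine work is bounding the number of ``bad'' disks and the multiplicities of zeros in them. This is where the Berkovich-curve potential theory and the combinatorics of divisors on the reduction metric graph $\Gamma$ enter, exactly as in the proof of Theorem~\ref{thm:with.ss.model}: one bounds the number of zeros of $\int\omega$ on an annulus or disk in terms of the ``tropical'' behavior of $\omega$, i.e.\ its image under the reduction/residue maps and the slopes of the associated piecewise-linear function on $\Gamma$, and then sums over the finitely many edges and vertices of $\Gamma$. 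Since $\Gamma$ has at most $3g-3$ edges and first Betti number at most $g$, these combinatorial inputs are uniform in the curve, yielding a bound $N_{\tors}(g,d)$ depending only on $g$ and $d$ (through the residue degree and the number of available primes of controlled size). In the good-reduction case one can be fully explicit — e.g.\ in the spirit of $N(g,1)=84g^2-98g+28$ — by taking $K=\Q_p$ for the smallest good $p$ and invoking the Stoll-type bound \eqref{eq:nice.NpAB.bound} on the number of zeros of an integral in a disk, uniformly over all $g$ differentials.

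The main obstacle, as in Theorem~\ref{T:uniformity-K-points}, is controlling the contribution of the residue disks meeting the non-degenerate part of the special fiber together with the behavior over annuli corresponding to the edges of $\Gamma$: a priori $\int\omega$ can have many zeros in such a disk or annulus, and one must show the \emph{total} count over all disks and all $g$ differentials stays bounded. The resolution is the same as in the rational-points argument — a careful accounting of the Newton polygons of the $p$-adic integrals in terms of the harmonic/PL structure on the Berkovich skeleton, using that the differentials span an at most $g$-dimensional space and that the geometry of $\Gamma$ is uniformly bounded — only now we do not need to cut down to a rank-$(g{-}r)$ subspace of differentials, which simplifies one step and removes the $r\le g-3$ hypothesis. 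The definition of the equivalence relation via $mP \sim mQ$ for some $m\ge 1$ is handled automatically: it says precisely $\iota(P)-\iota(Q)\in J(\bar F)_{\tors}$, so for $F$-points in a rational torsion packet all the $\iota(P)$ differ by elements of $J(F)_{\tors}$ and hence (after translating $O$) lie in $\iota^{-1}(J(F)_{\tors})$, to which the above applies.
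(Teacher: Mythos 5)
Your proposal is essentially the paper's argument: the paper proves Theorem~\ref{T:uniformity-K-torsion} simply by applying Theorem~\ref{thm:with.ss.model} to the subgroup $G = J(F)_{\tors}$, which has rank $\rho = 0 \leq g-3$, so that $N_{\tors}(g,d) = N(g,d)$; your central observation that the abelian logarithm kills torsion so the full $g$-dimensional space of differentials is available (hence no Mordell--Weil rank hypothesis) is exactly the point. Two caveats on your write-up. First, uniformity forces you to work at a \emph{fixed} small prime (the paper uses a prime of $F$ above $2$, or $\Q_3$ for $F=\Q$) and to handle arbitrary reduction there via Stoll's disc-and-annulus decomposition; your suggestion to ``take $K=\Q_p$ for the smallest good $p$'' cannot yield a bound depending only on $(g,d)$, since the smallest prime of good reduction is unbounded and the constant in Theorem~\ref{thm:with.ss.model} depends on $q$. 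Second, your claim that one need not cut down the space of differentials at all glosses over where $g\geq 3$ is actually used: on each residue annulus one must still impose the two codimension-one conditions $\Res(\omega)=0$ (exactness) and $a(\omega)=0$ (agreement of the Berkovich--Coleman and abelian integrals, Proposition~\ref{prop:correction.annulus}) to produce a genuine analytic antiderivative vanishing on the torsion points, and this is precisely why $\dim V \geq g - \rho \geq 3$ is required.
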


In fact, one may take $N_\tors(g,d) = N(g,d)$, the same constant in
Theorem~\ref{T:uniformity-K-points}.  Note that here there is no restriction on
the Mordell--Weil rank.

The second result concerns (geometric) torsion packets.  It involves the
following restriction on the reduction type.  Let $F$ be a number field, and let
$\fp$ be a finite prime of $F$.  Let $X$ be a smooth, proper, geometrically
connected curve of genus $g\geq 2$ over $F$.  Let $\fX$ be the stable model of
$X$ over an algebraic closure of $F_\pp$.  For each irreducible component $C$ of
the special fiber $\fX_s$ of $\fX$, let $g(C)$ denote its geometric genus, and let
$n_C$ denote the number points of the normalization of $C$ mapping to nodal
points of $\fX_s$. We say that $X$ \defi{satisfies condition} ($\dagger$)
\defi{at $\fp$} provided that
\begin{equation}
  \label{eq:reduction-condition}
  \tag{$\dag$}
  g > 2g(C) + n_C
\end{equation}
for each component $C$ of $\fX_s$.

\begin{theorem}
\label{T:uniformity-torsion}
Let $d\geq 1$ and $g\geq 4$ be integers.  There exists an explicit constant
$N_{\tors,\dagger}(g,d)$ such that for any number field $F$ of degree $d$ and
any smooth, proper, geometrically connected genus $g$ curve $X/F$ which
satisfies condition \textup{($\dagger$)} at some prime $\fp$ of $F$, we have
\[
\#\iota^{-1}\big(J(\bar F)_{\tors}\big) \leq N_{\tors,\dagger}(g,d).
\]  
for any Abel--Jacobi embedding
$\iota\colon X_{\bar F}\injects J_{\bar F}$ of $X_{\bar F}$ into its Jacobian.
\end{theorem}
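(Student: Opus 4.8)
The plan is to run the uniform Chabauty--Coleman argument underlying Theorems~\ref{T:uniformity-K-points} and~\ref{T:uniformity-K-torsion}, but carried out at the prime $\fp$ where~\eqref{eq:reduction-condition} holds and with the space of ``useful'' differentials taken to be \emph{all} of $H^0(X,\Omega^1)$; the hypothesis on the reduction type will play the role that ``$\rank J(F)\le g-3$'' plays in Theorem~\ref{T:uniformity-K-points}, namely it is what forces the resulting combinatorial sum over the Berkovich skeleton to be uniformly bounded.

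First I reduce to a purely $p$-adic statement. Let $p$ be the residue characteristic of $\fp$. After replacing $F_\fp$ by a finite extension $K$ of degree bounded in terms of $g$, we may assume $X$ has a stable model $\fX$ over $\calO_K$, and we work over the completed algebraic closure $\C_p$ of $F_\fp$. If the packet $\iota^{-1}(J(\bar F)_{\tors})$ is empty there is nothing to prove; otherwise fix a point $O$ in it. For any $P\in X(\C_p)$ with $\iota(P)\in J(\bar F)_{\tors}$, the class $[P-O]=\iota(P)-\iota(O)$ is torsion in $J(\C_p)$, hence lies in the kernel of the $p$-adic abelian logarithm, and so by Coleman's theory $\int_O^P\omega=0$ for \emph{every} $\omega\in H^0(X_K,\Omega^1_{X_K})$. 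It therefore suffices to bound
\[
\#\bigl\{\,P\in X(\C_p)\ :\ \int_O^P\omega=0\ \text{ for all }\ \omega\in H^0(X_K,\Omega^1_{X_K})\,\bigr\}.
\]
The key point is that here the full $g$-dimensional space of differentials annihilates the set in question --- the analogue of Mordell--Weil rank $0$ --- which is why no rank hypothesis appears; note also that $O$ need not be defined over $K$, only the $\omega$'s do.

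Next I apply the semistable-model machinery of Theorem~\ref{thm:with.ss.model}. Choosing a semistable vertex set for $X^{\an}$ whose skeleton $\Gamma$ contains all nodes of $\fX_s$ and all points above which the reduction of an $\calO_K$-integral basis of $H^0(X_K,\Omega^1)$ degenerates, one writes the classical points of $X^{\an}$ as a disjoint union of residue disks (organized by the components $C$ of $\fX_s$) and residue annuli (one per edge of $\Gamma$). On any such domain $U$, each function $P\mapsto\int_O^P\omega$ is a convergent power series (disk case) or of the form $c\log t+(\text{convergent Laurent series})$ (annulus case), and Theorem~\ref{thm:with.ss.model}, via Newton-polygon estimates and the non-Archimedean Poincar\'e--Lelong formula, bounds its number of zeros on $U$ in terms of the reduction of $\omega$ along $C$ (disk case) or of the slopes of $\log|\int_O^P\omega|$ at the two ends of $U$ (annulus case), up to a $\log_p$-type correction bounded by an explicit function of $p$ and of the ramification of $K$, hence ultimately by a function of $g$ and $d$. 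Since every $\omega$ annihilates the set of interest, on each $U$ one is free to use the differential minimizing this local bound; summing over all $U$ then bounds the displayed cardinality.

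The main obstacle is the final global step: showing that this sum over residue disks and annuli is uniformly bounded, and it is here that~\eqref{eq:reduction-condition} is used. For a fixed component $C$, the disk contributions are controlled by the linear system that $H^0(X_K,\Omega^1)$ cuts out on the normalization $\widetilde C$ through the dualizing sheaf $\omega_{\fX_s}|_C$, of degree $2g(C)-2+n_C$, together with the orders of vanishing of these reductions at the $n_C$ branches over nodes; the inequality $g>2g(C)+n_C$ forces every component to be ``small'' enough that these data, and the accompanying tropical Abel--Jacobi picture on the metric graph $\Gamma$, can be controlled simultaneously across all of $\fX_s$. Assembling the annulus contributions into a piecewise-linear function on $\Gamma$ and bounding its total variation of slope by the theory of divisors and linear systems on metric graphs --- using $h^1(\Gamma)+\sum_C g(C)=g$, $\sum_C\bigl(2g(C)-2+n_C\bigr)=2g-2$, and $\#\{\text{components}\},\#\{\text{nodes}\}\le 3g-3$ --- then yields an explicit $N_{\tors,\dagger}(g,d)$. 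I expect the delicate part to be exactly this interplay of Berkovich potential theory with linear systems on $\Gamma$, and in particular the verification that~\eqref{eq:reduction-condition}, holding at every component at once, suffices both to choose the needed differentials and to bound the skeletal contribution; the hypothesis $g\ge 4$ is what leaves~\eqref{eq:reduction-condition} enough room --- e.g.\ a totally degenerate curve with trivalent dual graph has every $n_C=3$, so satisfies~\eqref{eq:reduction-condition} precisely when $g\ge 4$ --- while the local Newton-polygon input is by now standard.
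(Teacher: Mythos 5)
Your reduction to bounding the common zeros of $P\mapsto\Abint_O^P\omega$ over all $\omega\in H^0(X,\Omega^1)$ is correct, and you rightly sense that ($\dagger$) must feed a combinatorial argument on the skeleton. But the central analytic step of your plan fails. You propose to decompose $X^\an$ into residue discs and residue annuli and to bound the zeros of the antiderivative on each piece via the Newton-polygon estimates behind Theorem~\ref{thm:with.ss.model}. Those estimates (Corollaries~\ref{cor:disc.bound} and~\ref{cor:actual.annulus.bound}) only bound zeros on the \emph{shrunk} domains $B_r$, $A_r$ with $r$ bounded away from $0$; in the rational-point setting one may take $r<1/e$ because $K$-points are $\varpi$-integral, but geometric points are not confined to any such subdomain. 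On a full open residue disc or annulus the antiderivative of an analytic $1$-form can have \emph{infinitely many} geometric zeros — $N_p(r,N_0)\to\infty$ as $r\to 0$, and already $\log(1+T)$ vanishes at $\zeta-1$ for every $p$-power root of unity $\zeta$ in the open unit disc. So the disc-and-annulus decomposition yields no bound at all for $\iota^{-1}(J(\bar F)_\tors)$. The paper instead covers $X^\an$ by \emph{overlapping} basic wide open subdomains $U_v=\tau^{-1}(S_v)$ attached to the vertices of the (split stable) skeleton; shrinking each by $r$ less than half the minimal edge length still gives a cover of all of $X^\an$, and on a wide open the mass formula (Proposition~\ref{prop:zeros.on.wide.open}) converts the zero count of $f=\int\omega$ into a sum of boundary slopes, which Proposition~\ref{p:annularbounds} and Lemma~\ref{lem:unstable.combinatorics} bound by $2g-1$ per end.

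Relatedly, the role you assign to ($\dagger$) — controlling the linear system cut out on each normalized component $\widetilde C$ — is not how the hypothesis is used, and your version of the global step is too vague to close. Condition ($\dagger$) enters through Coleman's computation of the de Rham cohomology of a basic wide open (Theorem~\ref{thm:H1dR}): the space $V_v$ of global $1$-forms that are \emph{exact on all of} $U_v$ has dimension at least $g-(2g(v)-1+\deg(v))$, and ($\dagger$) makes this at least $2$. One dimension is needed just to get a nonzero exact form, so that $f=\int\omega$ is a single-valued analytic function on $U_v$ to which the slope formula applies (on a wide open, unlike on a disc or annulus, a closed form need not be exact); the second dimension is spent imposing the linear condition that the Berkovich--Coleman integral agree with the abelian integral on $U_{v,\epsilon}$ (Propositions~\ref{prop:correction.annulus} and~\ref{prop:correction.wide.open}), without which vanishing of the abelian integral at torsion points says nothing about the zeros of $f$. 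Your proposal never produces such a differential, and this — not the geometry of the components $\widetilde C$ — is the precise place where $g>2g(C)+n_C$ is needed.
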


The condition ($\dagger$) is satisfied at $\fp$, for instance, when $X$ has totally
degenerate trivalent stable reduction over $F_\fp$.  One can take
\[ N_{\tors,\dagger}(g,d) = (16g^2-12g)\,N_2\big((4d\cdot 7^{2g^2+g+1})^{-1},\,2g-2\big), \]
where 
\begin{equation}\label{eq:explicit.geom.bound}
  N_2(s,N_0) = \min\big\{N\in\Z_{\geq 1}~:~s(n-N_0)>\lfloor \log_2(n) \rfloor ~\forall
  n\geq N\big\}. 
\end{equation}
See Theorem~\ref{thm:geom.torsion.bound} for a more precise statement.

A uniform bound as in Theorem~\ref{T:uniformity-torsion} for the size of
geometric torsion packets was previously known (see~\cite{buium:p-adic-jets}) for
curves of good reduction at a fixed prime $\pp$.  This result uses work of
Coleman~\cite{coleman:ramified_torsion_curves}, who also deduces uniform bounds
in many situations, still in the good reduction case: for instance, if $X/\Q$ has
ordinary good reduction at $p$ and its Jacobian $J$ has potential complex multiplication, then
$\#\iota^{-1}\big(J(\bar\Q)_{\tors}\big) \leq gp$.
Theorem~\ref{T:uniformity-torsion}, on the other hand, applies to curves with
highly \emph{degenerate} reduction, and hence approaches the uniform Manin--Mumford
conjecture from the other extreme.  It is also independent of the residue
characteristic of $F_\fp$.

The full power of the general machinery developed in this paper is needed for
the proof of Theorem~\ref{T:uniformity-torsion}, which is striking in that it
uses $p$-adic integration techniques to bound the number of \emph{geometric}
torsion points.  Whereas Theorems~\ref{T:uniformity-K-points}
and~\ref{T:uniformity-K-torsion} only involve integration on discs and annuli,
which was Stoll's idea, Theorem~\ref{T:uniformity-torsion} requires integrating
over finitely many \emph{wide open subdomains} which cover $X^{\an}$, and, as
such, is more subtle.  (See Section~\ref{sec:overview} below for a more detailed
summary of the proofs.)

\begin{remark}% [Bhargavology]
\label{R:Bhargavology}
One expects that the Mordell--Weil rank of the Jacobian of a curve is usually 0 or 1. In practice one needs a family of curves over a rational base to even make this precise. One therefore often restricts to families of hyperelliptic (or sometimes low genus plane) curves, in which case there are very recent partial results: see~\cite{bhargava2013average} (elliptic curves), \cite{bhargava2012average} (Jacobians of hyperelliptic curves), and \cite{Thorne:E6-arithmetic}  (certain plane quartics).
Combining these rank results with Chabauty's method and other techniques,
several recent results prove that the uniformity conjecture holds for a random
curve (in that there are no ``nonobvious'' points): see \cite{PoonenS:mostOddDegree,bhargava2013most,shankar2013average,bhargava2013pencils} (see \cite{ho:howManySurvey} for a recent survey).

\end{remark}
% % ----------------------------------------------------------------------------
\subsection{Overview of the proofs}\label{sec:overview}
% %----------------------------------------------------------------------------

Our central technique is Chabauty and Coleman's method of $p$-adic
integration. In a 1941 paper, Chabauty \cite{chabauty41-sur-les-points} proved
the Mordell conjecture in the special case of curves with Mordell--Weil rank at
most $g-1$, via a study of the $p$-adic Lie theory of the Jacobian of $X$. Four
decades later, Coleman \cite{Coleman:effectiveChabauty} made Chabauty's method
\emph{explicit}: he proved that for a curve $X/\Q$ of genus $g\geq 2$, rank
$r < g$, and a prime $p > 2g$ of good reduction,
$$\#X(\Q) \leq \#X(\F_{p}) + 2g - 2.$$
Coleman's method has been refined by many authors: these authors
\cite{LorenziniT:thue, McCallumP:chabautySurvey, stoll06:rational_points,
  katz_dzb13:chabauty_coleman}
allow $X$ to have bad reduction at $p$ and improve the $2g-2$ to $2r$,
\cite{siksek:symmetricPowerChabauty, park:tropicalSymmetricChabauty} generalize
to symmetric powers of curves, and a large body of work by many authors allow
one to explicitly execute this method in Magma for any particular curve of low
genus and low rank, frequently allowing one to compute $X(\Q)$ exactly.

Our starting point for proving Theorems~\ref{T:uniformity-K-points}
and~\ref{T:uniformity-K-torsion} is the recent progress of
Stoll~\cite{Stoll:uniformity}, who proves that for any \emph{hyperelliptic}
curve $X/\Q$ with Jacobian of rank $r \leq g-3$,
$$\#X(\Q) \leq 8(r+4)(g-1) + \max\{1,4r\} \cdot g.$$
While this bound still depends on $r$ and $g$, its independence from $p$ is a
substantial improvement.  This improvement is made possible by fixing a prime
$p$ (generally small and odd) and considering curves $X/\Q_p$ with arbitrary
reduction type.  Stoll's bold idea is to decompose $X(\Q_p)$ into a disjoint
union of residue discs and residue \emph{annuli} and to execute Chabauty's
method on \emph{both}.  The decomposition is achieved by performing a careful
analysis of the minimal regular model of $X$ over $\Z_p$.  Bounding zeros of
integrals on annuli is somewhat subtle: monodromy becomes an issue, and a key
technical feature of Stoll's work is his analysis and comparison of analytic
continuation and the emergent $p$-adic logarithms.  Stoll's method exploits the
description of differentials on a hyperelliptic curve as $f(x)dx/y$;
using an explicit calculation,
he is able to analyze the zeroes of the resulting integral directly via
Newton polygons.

In contrast, a differential on a typical curve may lack an explicit description,
and a direct, explicit analysis is impervious to classical
methods.
Moreover, one cannot hope to attain any kind of geometric bound as in
Theorem~\ref{T:uniformity-torsion} by analyzing $p$-adic integrals on discs and
annuli alone, as the antiderivative of an analytic function on a disc or annulus
may well have infinitely many geometric zeros.  This is where potential theory
on Berkovich analytic curves and the theory of linear systems on metric graphs
becomes useful.  To be clear, the inputs into the proofs of
Theorems~\ref{T:uniformity-K-points} and~\ref{T:uniformity-K-torsion} are
Stoll's bounds~\cite[Proposition~5.3]{Stoll:uniformity} on the number of discs
and annuli covering $X(\Q_p)$, and a new method of bounding the zeros of an
integral on an open annulus (Corollary~\ref{cor:actual.annulus.bound}).  As
mentioned before, the full power of the general machinery developed in this
paper is needed for the proof of Theorem~\ref{T:uniformity-torsion}; only a
fraction of it (namely, Section~\ref{sec:berkovich-curves} along with
Lemma~\ref{lem:unstable.combinatorics}) is needed for
Corollary~\ref{cor:actual.annulus.bound}.

Let us give an overview of our methods.  They are entirely geometric, so we work
over the field $\C_p$, the completion of an algebraic closure of $\Q_p$.  Let
$X$ be a curve over $\C_p$ of genus $g\geq 2$, and let $X^\an$ denote the
analytification of $X$, in the sense of
Berkovich~\cite{berkovic90:analytic_geometry}.  This is a reasonable topological
space in that it deformation retracts onto a finite metric graph
$\Gamma\subset X^\an$ called a \defi{skeleton}, whose combinatorics is
controlled by a semistable model of $X$.  (As $\C_p$ is algebraically closed,
such a model exists.)  If $f$ is a nonzero rational function on $X$, then
$-\log|f|$ is a piecewise affine function on $\Gamma$ with integer slopes.
Letting $\tau \colon X^\an\to\Gamma$ denote the deformation retraction, the
inverse image $\tau^{-1}(V)$ of a small neighborhood $V$ of a vertex $v$ of
$\Gamma$ is a \defi{basic wide open subdomain} in the sense of
Coleman~\cite[Section~3]{ColemanReciprocity}.  One can cover $X^\an$ by finitely many
basic wide open subdomains.

Our proof (roughly) proceeds by using the following steps.
\begin{enumerate}
\item Let $f$ be a nonzero analytic function on a basic wide open $U$ with central
  vertex $v$.  A basic fact from potential theory on $X^\an$ implies
  that $\deg(\div(f))$ can be calculated by summing the slopes of $-\log|f|$
  along the incoming edges at $v$ (see Proposition~\ref{prop:zeros.on.wide.open}).

\item Let $\omega$ be an exact differential form on $U$, and let $f = \int\omega$
  be an antiderivative.  A Newton polygon calculation
  (Proposition~\ref{p:annularbounds}) relates the slopes of $-\log|f|$ with the
  slopes of $-\log\|\omega\|$.  Here $\|\omega\|$ is the norm of $\omega$ with
  respect to the canonical metric on $\Omega^1_{X/\C_p}$, described
  in Section~\ref{sec:rosenlicht}.  The ``error term'' $N_p(\scdot,\scdot)$ appearing
  in~\eqref{eq:explicit.geom.bound} is introduced at this point.

\item Suppose now that $\omega$ is a global differential form on $X$.
  Then the restriction $F$ of $-\log\|\omega\|$ to $\Gamma$ is a ``section of
  the tropical canonical sheaf,'' in that $\div(F) + K_\Gamma\geq 0$, where
  $K_\Gamma$ is the canonical divisor on the graph $\Gamma$.  This is a
  consequence of the slope formula (otherwise known as the Poincar\'e--Lelong
  formula) for line bundles on Berkovich curves, which we prove in
  Theorem~\ref{thm:slope.formula.2}.

\item With $\omega$ and $F$ as above, we use a combinatorial argument
  (Lemma~\ref{lem:unstable.combinatorics}) about linear systems on vertex-weighted metric
  graphs to bound the slopes of $F$ in terms of the genus of the graph $\Gamma$,
  which is bounded by the genus of the curve.  This step plays the role of the
  usual Riemann--Roch part of the Chabauty--Coleman argument.  It also plays the
  role of~\cite[Corollary~6.7]{Stoll:uniformity}, which is proved
  using explicit calculations on hyperelliptic curves.

\item Using Coleman's calculation of the de Rham cohomology of a wide open
  subdomain $U$, under the restriction ($\dagger$) we can produce a nonzero
  global differential form $\omega$ which is exact on $U$.  Combining the above
  steps then provides a uniform bound on the number of zeros of $\int\omega$ on
  $U$.  Covering $X^\an$ by such wide opens $U$ yields
  Theorem~\ref{T:uniformity-torsion}, as the integral of any differential form
  vanishes on torsion points.

\item An open annulus is a simple kind of wide open subdomain.  Specializing the
  above results to annuli (Corollary~\ref{cor:actual.annulus.bound}) gives the
  generalization of~\cite[Proposition~7.7]{Stoll:uniformity} needed to prove
  Theorems~\ref{T:uniformity-K-points} and~\ref{T:uniformity-K-torsion} by
  using~\cite[Proposition~5.3]{Stoll:uniformity}. 
\end{enumerate}
It should be mentioned that in principle one can avoid the Berkovich language by
using intersection theory on semistable curves, but this leads to fussy
arguments and frequent base changes and at certain points is very difficult to
do.  We hope the reader will agree that the analytic framework is much more
natural.

In the summary above we have suppressed a major technical difficulty.  By an
``antiderivative'' of $\omega$, we always mean an analytic function $f$ such
that $df = \omega$.  The definite integral $\int_x^y f$ is then defined to be
$f(y) - f(x)$; this is what is needed for Newton polygons and potential theory.
However, for curves of bad reduction this does \emph{not} generally coincide
with the abelian integration used in the Chabauty--Coleman method, defined in terms of a
$p$-adic logarithm on the Jacobian.  Indeed, the former kind of integration will
have $p$-adic periods, whereas the latter cannot.  This was realized by
Stoll~\cite{Stoll:uniformity}, who found a way to compare the integrals on
annuli.  A systematic comparison between these integration theories in general,
given in Section~\ref{sec:integration}, should be of independent interest.

% %----------------------------------------------------------------------------
\subsection{Organization of paper}
% %----------------------------------------------------------------------------

In Section~\ref{sec:berkovich-curves}, we recall several basic facts about Berkovich
curves, and we develop the $p$-adic analytic machinery that we will need.  The
main features are the following: Theorem~\ref{thm:slope.formula.2}, a generalization of the
slope formula~\cite[Theorem~5.15]{baker_payne_rabinoff13:analytic_curves} to
sections of formally metrized line bundles; a careful treatment of Rosenlicht
differentials, a generalization of the relative dualizing sheaf to a semistable
curve over a possibly nondiscretely valued field, needed in order to define the
norm $\|\omega\|$ of a differential; and Coleman's calculation
(Theorem~\ref{thm:H1dR}) of the de Rham cohomology of a basic wide open
subdomain.

In Section~\ref{sec:integration}, we recall the basic properties of the
Berkovich--Coleman integral and the abelian integral in our somewhat restricted setting.
We then prove a result (Proposition~\ref{prop:correction.wide.open}) comparing
the two: essentially, the difference is controlled by the \emph{tropical}
Abel--Jacobi map.  The non-Archimedean uniformization theory of abelian
varieties plays a central role here.

In Section~\ref{sec:bounds-zeroes}, we explicitly bound the slopes of an analytic function
$f$ on an annulus in terms of the slopes of $\omega = df$
(Proposition~\ref{p:annularbounds}) and deduce, via a quick combinatorial
argument (Lemma~\ref{lem:graph.combinatorics}), a bound on the number
of zeroes of the integral of an exact differential on a wide open
(Theorem~\ref{thm:wideopen.bounds}).

Finally, in Section~\ref{sec:uniform-bounds}, we put everything together, proving our
main theorems on uniform bounds.

%%%%%%%%%%%%%%%%%%%%%%%%%%%%%%%%%%%%%%%%%%%%%%%%%%%%%%%%
\section{Berkovich curves}
\label{sec:berkovich-curves}
%%%%%%%%%%%%%%%%%%%%%%%%%%%%%%%%%%%%%%%%%%%%%%%%%%%%%%%%

In this section we develop the basic geometric facts about analytic curves over
non-Archimedean fields that will be used below.

%----------------------------------------------------------------------------
\subsection{General notation}
%----------------------------------------------------------------------------
We will use the following notations for non-Archimedean fields, in this section
only.  In subsequent sections we will generally restrict our attention to $\C_p$.
\smallskip

\def\arraystretch{1.0}
\begin{tabular}{ll}
  $K$ & A field that is complete with respect to a nontrivial,
  non-Archimedean valuation. \\
  $\val$ & $:K^\times\to\R\cup\{\infty\}$, the fixed valuation on $K$. \\
  $|\scdot|$ & $= \exp(-\val(\scdot))$, a corresponding absolute value. \\
  $R$ & $= \sO_K$, the valuation ring of $K$. \\
  $k$ & The residue field of $K$. \\
  $\Lambda$ & $= \val(K^\times)\subset\R$, the value group of $K$. \\
  $\sqrt\Lambda$ & The saturation of $\Lambda$.
\end{tabular}

\smallskip
Let $X$ be a proper $K$-scheme, and let $\fX$ be a proper, flat $R$-model of
$X$.  We use the following notations:
\smallskip

\begin{tabular}{ll}
  $X^\an$ & The analytification of $X$, in the sense of Berkovich~\cite{berkovic90:analytic_geometry}. \\
  $\sH(x)$ & The completed residue field at a point $x\in X^\an$. \\
  $\fX_k$ & The special fiber of $\fX$. \\
  $\red$ & $:X^\an\to\fX_k$, the reduction or specialization map.
\end{tabular}

\smallskip The completed residue field is a valued field extension of $K$.  For
$x\in X^\an$ the reduction $\red(x)$ is defined by applying the valuative
criterion of properness to the canonical $K$-morphism $\Spec(\sH(x))\to\fX$.  The
reduction map is anticontinuous, in that the inverse image of a closed set is
open.

%----------------------------------------------------------------------------
\subsection{Skeletons}\label{sec:skeletons}
%----------------------------------------------------------------------------
Here we fix our notions regarding non-Archimedean analytic curves and their
skeletons.  We adhere closely to the treatment
in~\cite{baker_payne_rabinoff13:analytic_curves}, our primary reference.

Let $X$ be a smooth, proper, geometrically connected $K$-curve.  We say that a
semistable $R$-model $\fX$ is \defi{split} if the $G_k$-action on the dual graph
of $\fX_\ksep$ is trivial, where $G_k = \Gal(\ksep/k)$.  Equivalently, we
require that each component of $\fX_\ksep$ be defined over $k$, that all nodes
of $\fX_\ksep$ be $k$-rational, and that the completed local ring of $\fX_\ksep$
at a node be isomorphic to $k\ps{R,S}/(RS)$.  (The final condition rules out the
possibility that $G_k$ acts via an involution on a loop edge, i.e., that it
interchanges ``tangent directions'' at the node.)  Let $\fX$ be a split
semistable $R$-model of $X$.  We will use the following notations for the
structure theory of $X^\an$: \smallskip

\begin{tabular}{ll}
  $\Gamma_\fX$ & $\subset X^\an$, the skeleton associated to $\fX$. \\
  $\tau$ & $:X^\an\to\Gamma_\fX$, the retraction to the skeleton. \\
  $g(x)$ & The genus of a type-$2$ point $x\in X^\an$.
\end{tabular}

\smallskip In general, a \defi{skeleton} of $X$ is a skeleton
$\Gamma = \Gamma_\fX$ corresponding to some split semistable model $\fX$ of $X$.
It is a $\Lambda$-metric graph (a metric graph with edge lengths in
$\Lambda$) whose vertices correspond bijectively to the generic points of
$\fX_k$ in the following way: if $x\in\Gamma_\fX$ is a vertex, then $\zeta = \red(x)$ is a
generic point of $\fX_k$, and $\red^{-1}(\zeta) = \{x\}$.  The edges of
$\Gamma_\fX$ correspond to the singular points of $\fX_k$, as follows.  For
$\varpi\in R$ nonzero we let
\[ \bS(\varpi)_+ = \big\{ \xi\in\bG_m^\an~:~|\varpi| < |T(\xi)| < 1 \big\}, \]
the open annulus of modulus $|\varpi|$.  Here $T$ is a parameter on $\bG_m$;
that is, $\bG_m = \Spec(K[T,T^{-1}])$.  If $\td x\in\fX_k$ is a node, then
$\red^{-1}(\td x)\cong\bS(\varpi)_+$ for some $\varpi\in R$ with
$|\varpi|\in(0,1)$; the open edge $e$ of $\Gamma_\fX$ corresponding to $\td x$
is the skeleton of the annulus $\bS(\varpi)_+$ (see
Section~\ref{sec:slopes.on.annuli}), and the length of $e$ is the logarithmic
modulus $\val(\varpi)\in\Lambda$ of $\bS(\varpi)_+$, which is an isomorphism
invariant.

The \defi{weight} $g(x)$ of a vertex $x\in\Gamma_\fX$ is defined to be the genus
of the type-$2$ point $x\in X^\an$, which in turn is the geometric genus of the
corresponding component of $\fX_k$. 
We have the basic identity
\begin{equation}\label{eq:genus.formula}
  g(X) = h_1(\Gamma_\fX) + \sum_{x\in\Gamma_\fX} g(x),
\end{equation}
where $g(X)$ is the genus of the curve $X$, and
$h_1(\Gamma_\fX) = \dim_\Q H_1(\Gamma_\fX,\Q)$ is the first (singular) Betti number
of $\Gamma_\fX$.  

Any curve admits a split semistable model (and hence a skeleton) after
potentially making a finite extension of the ground field $K$, of degree bounded
by the genus (see the proof of Theorem~\ref{thm:geom.torsion.bound}).  If $X$ has a
skeleton, then it has a minimal skeleton, which comes from a stable $R$-model
$\fX$ of $X$.  If $g\geq 2$, then the minimal skeleton is unique, and we denote
it by $\Gamma_{\min}$.  The vertices of $\Gamma_{\min}$ are the points
of $X^\an$ of nonzero genus and the points of $\Gamma_{\min}$ of valency greater
than $2$.

\begin{remark}\label{rem:split.condition}
  \newcommand\hatbarK{{\widehat{\overline{K}}}} Let $X$ be a $K$-curve as above,
  let $\C_K = \hatbarK$ be the completion of the algebraic closure of $K$, and
  let $X' = X_{\C_K}$ be the base change.  If $\fX$ is a semistable model of $X$,
  then the base change $\fX'$ to the ring of integers in $\C_K$ is a
  semistable model, which is necessarily split as the residue field of
  $\C_K$ is algebraically closed.  The original model $\fX$ is split if and
  only if the natural action of $G_K=\Gal(K^{\operatorname{sep}}/K)$ on $X'^\an$
  fixes $\Gamma_{\fX'}\subset X'^\an$ pointwise, that is, if the skeleton
  $\Gamma_\fX$ is ``defined over $K$.''  Indeed,
  Berkovich~\cite{berkovic90:analytic_geometry} defines the skeleton associated
  to a nonsplit semistable model as the quotient of $\Gamma_\fX$ by the action
  of $G_K$.  The split condition is necessary for the formal fibers over nodes
  in $\fX_k$ to be $K$-isomorphic to open annuli, which we use repeatedly.
\end{remark}

\begin{remark}\label{rem:comb.types}
  Suppose that the genus $g$ of $X$ is at least $2$, let
  $\Gamma = \Gamma_{\min}$ be the minimal skeleton, and let $G$ be the
  underlying vertex-weighted (nonmetric) graph.  Then $G$ is a connected graph
  of genus $g$ with the property that any vertex of valency $1$ or $2$ has
  positive weight.  It is easy to see that there are finitely many isomorphism
  classes of such graphs.  In other words, for fixed $g$, there are finitely
  many \defi{combinatorial types} of minimal skeletons of curves of genus $g$.
  This crucial observation allows us to derive uniform bounds from
  stable models (see Section~\ref{sec:stable.graphs} for much more precise
  statements).
\end{remark}

%----------------------------------------------------------------------------
\subsection{Metrized line bundles and the slope formula}\label{sec:slope.formula}
%----------------------------------------------------------------------------
In this section we assume that our non-Archimedean field $K$ is algebraically
closed, which implies that $k$ is algebraically closed and
$\Lambda = \sqrt\Lambda$.  Let $X$ be a curve as in Section~\ref{sec:skeletons}, and let
$\Gamma\subset X^\an$ be a skeleton which is not a point.  There is a
well-developed theory of divisors and linear equivalence on graphs and metric
graphs, which we briefly recall here (see~\cite{Baker:specialization} and the
references therein for details).  A \defi{tropical meromorphic function} on
$\Gamma$ is a continuous, piecewise affine-linear function $F\colon\Gamma\to\R$ with
integral slopes.  A \defi{divisor} on $\Gamma$ is a formal sum of points of
$\Gamma$; the group of divisors is denoted $\Div(\Gamma)$.  The divisor of a
meromorphic function $F$ is $\div(F) = \sum_{x\in\Gamma}\ord_x(F)\,(x)$, where
$\ord_x(F) = -\sum_{v\in T_x(\Gamma)} d_v F(x)$, $T_x(\Gamma)$ is the set of
tangent directions at $x$, and $d_v F(x)$ is the slope of $F$ in the direction
$v$.  In other words, $\ord_x(F)$ is the sum of the incoming slopes of
$F$ at $x$.

To reduce questions about curves to questions about skeletons, we will
need to relate divisors on $X$ to divisors on $\Gamma$.  The retraction map
$\tau\colon X^\an\to\Gamma$ extends by linearity to a map on divisors
\[ \tau_* \colon \Div(X) \To \Div(\Gamma). \]

\begin{theorem} \label{thm:slope.formula}
  Let $f$ be a nonzero meromorphic function on $X$, and let
  $F = -\log|f|\big|_\Gamma$.  Then $F$ is a tropical meromorphic function on
  $\Gamma$ and 
  \[ \div(F) = \tau_* \div(f). \]
\end{theorem}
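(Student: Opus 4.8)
The identity is the non-Archimedean Poincaré–Lelong (``slope'') formula for curves; over an algebraically closed $K$ it is essentially \cite[Theorem~5.15]{baker_payne_rabinoff13:analytic_curves}, and it is also the special case of Theorem~\ref{thm:slope.formula.2} in which the line bundle is $\sO_X$ with its trivial formal metric. The plan is to give a direct argument by reducing everything to a well-chosen semistable model and invoking the classical fact that a nonzero rational function on a proper curve over a field has divisor of degree zero. Throughout, recall that $F = -\log|f|$ is a globally defined function on $X^\an$, so restricting it to various skeletons restricts one and the same function; likewise $\tau_*$ is compatible with passing between a skeleton and a refinement. Since refining $\Gamma$ only inserts $2$-valent vertices, and both $\div(F)$ and $\tau_*\div(f)$ push forward correctly under the further retraction of metric graphs, it suffices to prove the formula for one convenient skeleton.

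\textbf{Step 1 (choice of model).} I would first enlarge $\fX$ so that every point of $\Supp(\div f)$ retracts to a vertex of $\Gamma = \Gamma_\fX$, distinct points of $\Supp(\div f)$ have distinct reductions, and each such reduction is a smooth point of $\fX_k$ lying on a single component. This is a routine semistable-reduction manipulation.

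\textbf{Step 2 (behaviour on edges).} Each open edge $e^\circ$ of $\Gamma$ is the skeleton of the open annulus $A_e = \red^{-1}(\td x_e)=\bS(\varpi_e)_+$ over the corresponding node $\td x_e$, and by Step 1 the restriction $f|_{A_e}$ is a unit. By the local analysis of $-\log|f|$ on an annulus (Section~\ref{sec:slopes.on.annuli}), $F|_{e^\circ}$ is then affine with integer slope, and that slope is the twist of the unit $f|_{A_e}$, i.e. the exponent of $T$ in its reduction. In particular $F$ is continuous, piecewise affine with integer slopes, with no mass outside the vertices, so it is a tropical meromorphic function on $\Gamma$; and $\tau_*\div(f)$ is likewise supported on the vertices, so it remains to check the orders at each vertex.

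\textbf{Step 3 (the computation at a vertex --- the crux).} Fix a vertex $x$, with associated component $C = C_x\subset\fX_k$, a proper $k$-curve. Since $K$ is algebraically closed and $x$ is a type-$2$ point, $|f(x)| = |c|$ for some $c\in K^\times$; replacing $f$ by $c^{-1}f$ (which only translates $F$ by a constant) makes the reduction $\bar f$ of $f$ along $C$ a well-defined nonzero rational function on $C$. The tangent directions of $\Gamma$ at $x$ correspond to the nodes $p_e\in C$ of $\fX_k$ on $C$, and by Step 1 the zeros and poles of $f$ retracting to $x$ are exactly the points of $\red^{-1}(q)$ for $q$ a smooth point of $C$ with $\ord_q(\bar f)\ne 0$, with matching multiplicity; hence
\[
\big(\tau_*\div(f)\big)(x) \;=\; \sum_{q\in C\ \mathrm{smooth}} \ord_q(\bar f).
\]
On the other hand, the annulus description in Step 2 identifies the incoming slope of $F$ along an edge $e$ at $x$ with $\ord_{p_e}(\bar f)$ (this is where the sign conventions have to be checked, via the leading term of the Laurent expansion of $f|_{A_e}$ at the $C$-end of $A_e$), so
\[
\ord_x(F) \;=\; -\!\!\!\sum_{v\in T_x(\Gamma)}\!\! d_vF(x) \;=\; \sum_{e\ \mathrm{at}\ x}\ord_{p_e}(\bar f).
\]
Adding the two displays yields $\sum_{q\in C}\ord_q(\bar f) = \deg\div(\bar f) = 0$, since $C$ is proper; therefore $\ord_x(F) = \big(\tau_*\div f\big)(x)$ at every vertex, and $\div(F) = \tau_*\div(f)$.

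\textbf{Main obstacle.} The only genuinely nontrivial input is the local dictionary used in Steps~2--3 between the slope of $-\log|f|$ along an edge and the order of the reduced function $\bar f$ at the corresponding point of the special fiber, together with the verification that the formal fiber over a smooth point $q$ of $\fX_k$ contains exactly $\ord_q(\bar f)$ zeros/poles of $f$ counted with multiplicity. This is a statement about functions on open annuli and open balls and is precisely the content of the annulus slope computation of Section~\ref{sec:slopes.on.annuli}; granting it, the global formula drops out of $\deg\div(\bar f)=0$ as above. (If $\Gamma$ is a point the formula is trivial, since both sides vanish by $\deg\div(f)=0$.)
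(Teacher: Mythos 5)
Your strategy is sound, and in fact it is not really a ``different route'' from the paper so much as from this particular proof: the paper disposes of Theorem~\ref{thm:slope.formula} by citing \cite[Theorem~5.15]{baker_payne_rabinoff13:analytic_curves}, but your argument is essentially the paper's own proof of the generalization, Theorem~\ref{thm:slope.formula.2}, specialized to $\fL=\sO_\fX$ (where $\deg(\fL|_{D_\zeta})=0$ recovers your appeal to $\deg\div(\bar f)=0$). The reduction to a convenient model, the annulus analysis on edges, and the vertex computation via the reduction $\bar f$ on the component $C$ all match; the two local inputs you isolate (the count of zeros of $f$ in a residue disc over a smooth point, and the slope/order dictionary at a node) are exactly \cite[Proposition~2.10(2)]{baker_payne_rabinoff13:analytic_curves} and \cite[Theorem~5.15(3)]{baker_payne_rabinoff13:analytic_curves}, which the paper also invokes.

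There is, however, a sign error in Step~3 that breaks the final deduction as written. The correct dictionary (check it on the standard node $ST=\varpi$ with $f=T$: there $F(\xi_r)=r$ increases toward the node, and $\ord_{p_e}(\bar T)=1$) is that $\ord_{p_e}(\bar f)$ equals the \emph{outgoing} slope $d_{v_e}F(x)$ in the direction of the edge $e$, not the incoming slope. Hence
\[
\ord_x(F)\;=\;-\sum_{e\ \mathrm{at}\ x} d_{v_e}F(x)\;=\;-\sum_{e\ \mathrm{at}\ x}\ord_{p_e}(\bar f),
\]
with a minus sign. Your two displays then \emph{subtract} to $\sum_{q\in C}\ord_q(\bar f)=0$, giving $\ord_x(F)=(\tau_*\div f)(x)$ as desired. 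As you have written it, the two displays \emph{add} to zero, from which one could only conclude $\ord_x(F)=-(\tau_*\div f)(x)$; the sentence ``adding the two displays yields $0$, therefore they are equal'' is not a valid inference. This is a correctable bookkeeping slip (which you flagged), not a structural flaw, but it must be fixed. Two smaller points: when $e$ is a loop edge, $p_e$ is a node of $C$ itself and $\ord_{p_e}(\bar f)$ must be taken on the normalization (or one subdivides loop edges first, as the paper does); and the reduction to a refined skeleton in your preamble should be justified by the purely combinatorial fact that for a tropical meromorphic function on $\Gamma'\supset\Gamma$, the pushforward of its divisor under the retraction $\Gamma'\to\Gamma$ equals the divisor of its restriction (which follows from $\deg\div=0$ on each contracted tree), rather than left implicit.
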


\begin{proof}
  This is a consequence of the slope formula for non-Archimedean
  curves (see~\cite[Theorem~5.15]{baker_payne_rabinoff13:analytic_curves}).
\end{proof}

We will need a generalization of Theorem~\ref{thm:slope.formula} that applies to
a meromorphic section of a formally metrized line bundle.
Theorem~\ref{thm:slope.formula.2} below can
in principle be extracted from Thuillier's
Poincar\'e--Lelong
formula~\cite[Proposition~4.2.3]{thuillier05:thesis},
and indeed should be seen as a reformulation of~\cite[Proposition~4.2.3]{thuillier05:thesis}, but it
is easier to derive it from the slope formula as it appears
in~\cite[Theorem~5.15]{baker_payne_rabinoff13:analytic_curves}.
In the discretely valued case, a version of Theorem~\ref{thm:slope.formula.2} can be found in
Christensen's thesis~\cite[Satz~1.3]{christensen:thesis}, with a similar proof.

The formal metric on a line bundle with an integral model is a basic
construction in Arakelov theory, which we briefly recall.  Let $\fX$ be an
admissible formal $R$-scheme in the sense
of~\cite{bosch_lutkeboh93:formal_rigid_geometry_I}, that is, a flat formal
$R$-scheme of topological finite presentation.  Let $X = \fX_\eta$ be the
analytic generic fiber, a $K$-analytic space.  Let $\fL$ be a line bundle on
$\fX$, and let $L = \fL_\eta$, a line bundle on $X$.  Let $s$ be a nonzero
meromorphic section of $L$, and let $x\in X$ be a point which is not a pole of
$s$.  Let $\fU\subset\fX$ be an open neighborhood of $\red(x)\in\fX$ on which
$\fL$ is trivial.  Then $U = \red^{-1}(\fU) = \fU_\eta$ is a closed analytic
domain containing $x$ on which $L$ is trivial, so we can write $s|_U = ft$,
where $t$ is a nonvanishing section of $\fL|_\fU$ and $f$ is a nonzero
meromorphic function on $U$.  The \defi{formal metric} on $L$ induced by $\fL$
is the metric $\|\scdot\|_\fL$ defined by
\[ \|s(x)\|_\fL \coloneq |f(x)|. \]
This is independent of all choices because an invertible function on $\fU$ has
absolute value $1$ everywhere.

In the algebraic situation, let $\fX$ be a \emph{proper} and flat $R$-scheme
with generic fiber $X$, and let $\hat\fX$ denote the completion with respect to
an ideal of definition in $R$.  Then $\hat\fX$ is a proper
admissible formal $R$-scheme, and there is a canonical isomorphism
$X^\an\cong\hat\fX_\eta$.  Hence any line bundle $\fL$ on $\fX$ with generic
fiber $L$ induces a formal metric $\|\scdot\|_\fL$ on $L^\an$.

\begin{remark} \label{rem:formal.metric.intersections}
  Formal metrics have the following intersection-theoretic interpretation over a
  discretely valued field $K$. (Note that the definition of $\|\scdot\|_\fL$
  above does not use that $K$ is algebraically closed.)  Suppose that $\Z$ is
  the value group of $K$.  For simplicity we restrict ourselves to a regular
  split semistable model $\fX$ of a smooth, proper, geometrically connected
  curve $X$.  A meromorphic section $s$ of $L$ can be regarded as a meromorphic
  section of $\fL$, and hence has an order of vanishing $\ord_D(s)$ along any
  irreducible component $D$ of $\fX_k$.  If $\zeta\in X^\an$ is the point
  reducing to the generic point of $D$, then we have the equality
  \[ -\log\|s(\zeta)\|_\fL = \ord_D(s). \]
  This follows from the observation that $\ord_D\colon K(X)^\times\to\Z$ reduces
  to (i.e., is centered at) the generic point of $D$.
\end{remark}

\begin{theorem}[The slope formula]\label{thm:slope.formula.2} 
  Let $X$ be a smooth, proper, connected $K$-curve, and let $\fX$ be a semistable
  $R$-model of $X$ with corresponding skeleton $\Gamma_\fX\subset X^\an$.
  Assume that $\fX$ is not smooth, so that $\Gamma_\fX$ is not a point. Let
  $\fL$ be a line bundle on $\fX$, let $L = \fL|_X$, and let $s$ be a nonzero
  meromorphic function on $L$.  Let $F = -\log\|s\|_\fL\big|_{\Gamma_\fX}$.
  Then $F$ is a tropical meromorphic function on $\Gamma_\fX$ and
  \begin{equation}\label{eq:section.of.trop.bundle}
    \tau_* \div(s) = \div(F) + \sum_{\zeta} \deg(\fL|_{D_\zeta})\, (\zeta),
  \end{equation}
  where the sum is taken over vertices $\zeta$ of $\Gamma_\fX$ and
  $D_\zeta$ is the irreducible component of $\fX_k$ with generic point
  $\red(\zeta)$. 
\end{theorem}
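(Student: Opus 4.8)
The plan is to reduce Theorem~\ref{thm:slope.formula.2} to the ordinary slope formula (Theorem~\ref{thm:slope.formula}) by trivializing $\fL$ locally and tracking the correction terms that come from the transition functions. First I would choose a finite affine open cover $\{\fU_i\}$ of $\fX$ such that $\fL|_{\fU_i}$ is trivial, with a chosen trivializing section $t_i$ of $\fL|_{\fU_i}$; on overlaps we get transition functions $g_{ij} = t_i/t_j \in \sO^\times(\fU_i \cap \fU_j)$, which have $|g_{ij}| \equiv 1$ on $U_i \cap U_j$ since they are formal units. Writing $s|_{U_i} = f_i t_i$ with $f_i$ meromorphic on $U_i = \red^{-1}(\fU_i)$, we have $\|s\|_\fL = |f_i|$ on $U_i$ by the definition of the formal metric, and $f_i = g_{ij} f_j$ on the overlaps. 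Because $|g_{ij}| \equiv 1$, the functions $-\log|f_i|$ glue to a well-defined continuous function on all of $X^\an$, namely $-\log\|s\|_\fL$; restricting to $\Gamma_\fX$ gives $F$. The piecewise-affine-with-integer-slopes property of $F$ is local on $\Gamma_\fX$, so it follows from applying Theorem~\ref{thm:slope.formula} to each $f_i$ (on a suitable subcurve, or by passing to a projective completion of $\fU_i$; one should be slightly careful here since $U_i$ is only a closed analytic domain, not all of $X^\an$, so it is cleanest to pick a single rational function realizing $f_i$ on a neighborhood of each edge and each vertex of $\Gamma_\fX$).

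Next I would compute $\div(F)$ vertex by vertex. Fix a vertex $\zeta$ of $\Gamma_\fX$, with $D_\zeta$ the corresponding component of $\fX_k$ and generic point $\red(\zeta)$. Pick $i$ with $\red(\zeta) \in \fU_i$; then near $\zeta$ on $\Gamma_\fX$ we have $F = -\log|f_i|$, so the incoming slopes of $F$ at $\zeta$ agree with those of $-\log|f_i|$, and hence $\ord_\zeta(F)$ computed on $\Gamma_\fX$ equals the order-of-vanishing contribution of $f_i$ at $\zeta$ in the sense of Theorem~\ref{thm:slope.formula}, applied to $f_i$ as a rational function. The point is then to compare $\tau_*\div(s)$ with $\tau_*\div(f_i)$ near $\zeta$: since $s = f_i t_i$ and $t_i$ is a nonvanishing section of $\fL$ on $\fU_i$, the section $t_i$ has no zeros or poles on the generic fiber $X$, so $\div(s) = \div(f_i)$ as divisors on $X$ in a neighborhood of the points reducing into $\fU_i$. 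Therefore the only discrepancy between $\tau_*\div(s)$ and $\div(F)$ can occur \emph{at the vertices} $\zeta$, and at each such vertex it is exactly $-\log\|t_i(\zeta)\|_\fL$-type data. Precisely, using Remark~\ref{rem:formal.metric.intersections} as the guiding principle — that $-\log\|{\cdot}\|_\fL$ at $\zeta$ computes an order of vanishing along $D_\zeta$ — one identifies the coefficient of $(\zeta)$ in $\tau_*\div(s) - \div(F)$ with the degree of $\fL$ restricted to the component $D_\zeta$. I would make this last identification rigorous by working on the component $D_\zeta$ directly: the restriction $\fL|_{D_\zeta}$ is a line bundle of some degree $\deg(\fL|_{D_\zeta})$ on the curve $D_\zeta/k$, and the global count of zeros minus poles of the reduction of $s|_{U_i}/(\text{local trivialization})$ along $D_\zeta$ forces this degree to appear. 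This is cleanest to phrase via a Weil divisor computation on the regular model in the discretely valued case (as in Christensen's thesis), and then to bootstrap to the general algebraically closed case by a base-change/approximation argument, since both sides of \eqref{eq:section.of.trop.bundle} are insensitive to replacing $K$ by a complete valued extension.

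The main obstacle I anticipate is bookkeeping at the vertices: making precise the claim that the vertex coefficient of $\tau_*\div(s) - \div(F)$ is $\deg(\fL|_{D_\zeta})$ and not, say, a degree twisted by contributions from the nodes of $D_\zeta$ or by the weights $g(\zeta)$. The resolution is that the slopes of $F$ along edges at $\zeta$ already absorb exactly the ``horizontal'' part of $\div(s)$ flowing through the nodes, so what remains is genuinely the intrinsic degree of the restricted line bundle on the component — this is where Theorem~\ref{thm:slope.formula} does the heavy lifting, since it already correctly balances edge-slopes against $\tau_*\div$ for the trivial bundle, and we are only adding the line-bundle correction on top. A secondary technical point is handling the fact that the $U_i$ are closed analytic domains rather than opens, and that $f_i$ is a priori only analytic/meromorphic on $U_i$ and not globally a rational function on $X$; I would handle this by noting that $F$ is determined by its germ along $\Gamma_\fX$, and near each vertex and each edge one can choose a single rational function on $X$ agreeing with $f_i$ up to a unit, reducing everything to Theorem~\ref{thm:slope.formula}.
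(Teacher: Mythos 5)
Your overall strategy is the paper's: trivialize $\fL$ formally-locally so that $F=-\log|f_i|$ near each edge and vertex, invoke the ordinary slope formula to handle the edges, and then account for the vertex discrepancy by a degree count on the component $D_\zeta$. The edge part of your argument is correct and matches the paper, which applies \cite[Proposition~2.10(1)]{baker_payne_rabinoff13:analytic_curves} on each $\tau^{-1}(e)$ after noting that the corresponding node lies in a formal affine trivializing $\fL$.

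The problem is the route you propose for making the vertex identification rigorous. Reducing to ``a Weil divisor computation on the regular model in the discretely valued case'' and then bootstrapping ``by a base-change/approximation argument'' does not work as stated: the theorem is asserted over an algebraically closed complete field, whose value group is divisible, and the data $(\fX,\fL,s)$ — in particular the edge lengths of $\Gamma_\fX$ — need not descend to any discretely valued subfield. Insensitivity of both sides under complete valued \emph{extensions} goes in the wrong direction; what you would need is descent, and perturbing the model to approximate it by one defined over a DVR changes $\Gamma_\fX$, its vertices, and the components $D_\zeta$, i.e.\ changes both sides of \eqref{eq:section.of.trop.bundle}. The paper avoids this entirely by doing the degree count directly on the special fiber over $k$: after blowing up to subdivide loop edges (so that $D_\zeta$ is smooth — a step you omit but need in order to reduce $s$ to a section of $\fL|_{D_\zeta}$ and count its zeros) and rescaling $s$ so that $\|s(\zeta)\|_\fL=1$, the section $s$ reduces to a nonzero meromorphic section $\td s$ on $D_\zeta$ with $\sum_{\td x\in D_\zeta(k)}\ord_{\td x}(\td s)=\deg(\fL|_{D_\zeta})$. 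The two ingredients that turn this identity into \eqref{eq:section.of.trop.bundle} — that $\ord_{\td x}(\td s)$ at a node equals the outgoing slope $d_{v_{\td x}}F(\zeta)$, and that $\ord_{\td x}(\td s)$ at a smooth point equals $\deg(\div(s|_{\red^{-1}(\td x)}))$ — are exactly \cite[Theorem~5.15(3)]{baker_payne_rabinoff13:analytic_curves} and \cite[Proposition~2.10(2)]{baker_payne_rabinoff13:analytic_curves}; your appeal to Remark~\ref{rem:formal.metric.intersections} and to ``the global count of zeros minus poles'' gestures at this but does not supply these two facts, which are where the actual content lies. If you replace the discretely-valued detour by this direct computation on $D_\zeta$, your proof closes and coincides with the paper's.
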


\begin{proof}
  If $e\subset\Gamma_\fX$ is an open edge, then $\red(\tau^{-1}(e))$ is a node in
  $\fX_k$, which is contained in a formal affine open subset of $\hat\fX$ on
  which $\fL$ is trivial.  Hence $F = -\log|f|$ on $A = \tau^{-1}(e)$ for some
  nonzero meromorphic function $f$ on $A$, so $F$ is piecewise affine-linear
  with integral slopes on $A$ and $\div(F|_A) = \tau_* \div(s|_A)$
  by~\cite[Proposition~2.10(1)]{baker_payne_rabinoff13:analytic_curves}.
  Since this holds for each edge, $F$ is a tropical meromorphic function on
  $\Gamma_\fX$.

  Now let $\zeta$ be a vertex of $\fX$, and let $D = D_\zeta$.  By blowing up
  $\fX$ we can add vertices to the interior of loop edges in $\Gamma_\fX$.
  Hence we may assume that $\Gamma_\fX$ has no loop edges, so that $D$ is
  smooth. After multiplying by a nonzero scalar we may also assume that
  $\|s(\zeta)\|_\fL = 1$, so that $s$ reduces to a nonzero meromorphic function
  $\td s$ on $D$.  Let $\td x\in D(k)$, and let $v_{\td x}$ be the tangent
  direction at $\zeta$ in the direction of $\red^{-1}(\td x)$
  (see \cite[(5.13)]{baker_payne_rabinoff13:analytic_curves}).  Let
  $\fU$ be an open neighborhood of $\td x$ trivializing $\fL$, and let
  $U = \red^{-1}(\fU_k)$, so $F = -\log|f|$ on $U$ for some nonzero meromorphic
  function $f$ on $U$ with a well-defined reduction $\td f$ on $\fU_k$.
  By~\cite[Theorem~5.15(3)]{baker_payne_rabinoff13:analytic_curves}%
  \footnote{This theorem is only stated for \emph{algebraic} meromorphic
    functions, but is true for analytic meromorphic functions such as $f$
    (see~\cite[Remark~3.6.6]{cohen_temkin_trushin14:differentials}).}
  we have
  \[ \ord_{\td x}(\td s) = \ord_{\td x}(\td f) = d_{v_{\td x}} F(\zeta). \]
  Combining this with~\cite[Proposition~2.10(2)]{baker_payne_rabinoff13:analytic_curves} yields
  \[ \ord_{\td x}(\td s) = \deg(\div(s|_{\red^{-1}(\td x)})) \]
  for all points $\td x\in D^\sm(k)$, the set of points of $D(k)$ which are not
  nodes in $\fX_k$.  Since the edges of $\Gamma_\fX$ adjacent to $\zeta$
  represent the tangent vectors at $\zeta$ in the direction of the points of
  $D(k)\setminus D^\sm(k)$, we combine the previous two equations to obtain
  \[\begin{split}
    \deg(\fL|_D) &= \sum_{\td x\in D(k)} \ord_{\td x}(\td s)
    = \sum_{\td x\in D^\sm(k)} \ord_{\td x}(\td s)
    + \sum_{\td x\in D(k)\setminus D^\sm(k)} \ord_{\td x}(\td s) \\
    &= \deg(\div(s|_{\tau^{-1}(\zeta)})) + 
    \sum_{\td x\in D(k)\setminus D^\sm(k)} d_{v_{\td x}} F(\zeta) \\
    &= \deg(\div(s|_{\tau^{-1}(\zeta)})) - \ord_\zeta(F).
  \end{split}\]
Equation~\eqref{eq:section.of.trop.bundle} follows.
\end{proof}

\begin{remark}
  As mentioned above, our slope formula is closely related to the
  Poincar\'e--Lelong formula in non-Archimedean Arakelov theory.  When the base
  is a discretely valued field, Theorem~\ref{thm:slope.formula.2} essentially
  goes back to Zhang~\cite{zhang93:admissible_pairing}.  The term
  $\sum_{\zeta} \deg(\fL|_{D_\zeta}) (\zeta)$
  in~\eqref{eq:section.of.trop.bundle} is precisely the measure $\hat c_1(\fL)$
  that Chambert-Loir~\cite{chambertloir06:mesures_equidist} associates to the
  formally metrized line bundle $L$, where $(\zeta)$ is interpreted as a point
  mass at $\zeta$.  In this language, we have
  \[ \hat c_1(\fL) = \div\big(\log\|s\|_\fL\big|_{\Gamma_\fX}\big) +
  \tau_*\div(s), \]
  where again the divisors are interpreted as counting measures.
  This is formally similar to the Poincar\'e--Lelong formula (see~\cite[Lemma~2.2.5]{chambertloir11:heights_measures} for a precise
  statement, still over a discretely valued base).
\end{remark}

\subsection{Integral Rosenlicht differentials}\label{sec:rosenlicht}
We will apply Theorem~\ref{thm:slope.formula.2} to sections of a certain
canonical extension $\Omega^1_{\fX/R}$ of the cotangent bundle $\Omega^1_{X/K}$
to our semistable model $\fX$.  If $R$ were discretely valued, we could define
$\Omega^1_{\fX/R}$ as the relative dualizing sheaf, or as the sheaf of
logarithmic differentials.  In the non-Noetherian case it is easiest to make a
somewhat ad-hoc construction, which we develop here as it is nonstandard.  To 
begin we may assume that $K$ is any complete non-Archimedean field with algebraically
closed residue field $k$.

\begin{definition}\label{def:rosenlicht}
  Let $\fX$ be a (not necessarily proper) semistable $R$-curve with smooth
  generic fiber, and let $j\colon\fU\injects\fX$ be the inclusion of the smooth
  locus.  The \defi{sheaf of integral Rosenlicht differentials} on $\fX$ is
  defined to be
  \[ \Omega^1_{\fX/R} \coloneq j_*\Omega^1_{\fU/R}, \]
  where $\Omega^1_{\fU/R}$ is the usual sheaf of K\"ahler differentials.
\end{definition}

\begin{example}\label{eg:basic.chart}
  Let $\fX = \Spec(R[S,T]/(ST-\varpi))$ for some $\varpi\in K^\times$ with
  $|\varpi| < 1$.  The smooth locus $\fU$ is the union of the two distinguished
  affine open subsets, where $S$ and $T$ are invertible.  Hence 
  \[ H^0(\fX,\Omega^1_{\fX/R}) = R[S^{\pm1}]\,\frac{dS}S\cap
  R[T^{\pm1}]\,\frac{dT}T \]
  inside of $K[S^{\pm 1}]\,dS/S = K[T^{\pm 1}]\,dT/T$.  Here we use that
  $S = \varpi/T$ and $dS/S = -dT/T$.  From this it is easy to see that
  $\Omega^1_{\fX/R}$ is a trivial invertible sheaf on $\fX$, with $dS/S=-dT/T$ a
  nonvanishing section.

  Note that a section $\omega\in H^0(\fX,\Omega^1_{\fX/R})$ restricts to a
  meromorphic section of the cotangent bundle on each component of the special
  fiber of $\fX$, with at worst a simple pole at the origin, and such that the
  residues at the origin at each component sum to zero.
\end{example}

\begin{lemma}\label{lem:rosenlicht.invertible}
  Let $\fX$ be a semistable $R$-curve as in
  Definition~\ref{def:rosenlicht}.
  \begin{enumerate}
  \item The sheaf $\Omega^1_{\fX/R}$ is invertible.
  \item If $f\colon\fX'\to\fX$ is an \'etale morphism of semistable $R$-curves, then
    $f^*\Omega^1_{\fX/R} = \Omega^1_{\fX'/R}$.
  \item The restriction of $\Omega^1_{\fX/R}$ to the special fiber $\fX_k$ is
    isomorphic to the relative dualizing sheaf of $\fX_k/k$.
  \end{enumerate}
\end{lemma}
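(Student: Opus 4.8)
The whole statement is \'etale-local on $\fX$, so the plan is to reduce everything to the two standard charts: the smooth chart $\Spec(R[T])$ (or a smooth affine scheme \'etale over it) and the nodal chart $\Spec(R[S,T]/(ST-\varpi))$ of Example \ref{eg:basic.chart}. The key input is that a semistable $R$-curve is, by definition, \'etale-locally isomorphic to one of these two models; since the formation of $j_*\Omega^1_{\fU/R}$ commutes with \'etale base change (because $j$, $\Omega^1_{-/R}$, and $j_*$ of a quasi-coherent sheaf all do, using that \'etale morphisms are flat so $j_*$ commutes with the base change, and that the \'etale locus pulls back to the \'etale locus), part (2) is essentially immediate and also reduces (1) and (3) to the two model charts.

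For part (1): on the smooth chart $\fU = \fX$ we have $j = \mathrm{id}$ and $\Omega^1_{\fX/R}$ is free of rank $1$ (generated by $dT$), nothing to prove. On the nodal chart, Example \ref{eg:basic.chart} already does the computation: $H^0$ of $j_*\Omega^1_{\fU/R}$ is the intersection $R[S^{\pm 1}]\,dS/S \cap R[T^{\pm 1}]\,dT/T$ inside $K[S^{\pm1}]\,dS/S$, and using $dS/S = -dT/T$ together with $ST = \varpi$ one checks that this intersection is exactly the free rank-one module $R[S,T]/(ST-\varpi)\cdot (dS/S)$; so $\Omega^1_{\fX/R}$ is trivial with nonvanishing generator $dS/S = -dT/T$. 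Invertibility globally then follows since a coherent sheaf that is \'etale-locally free of rank one is invertible.

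For part (3): again by (2) and the \'etale-local structure it suffices to check on the two charts that the restriction $\Omega^1_{\fX/R}\otimes_R k$ agrees with the relative dualizing sheaf $\omega_{\fX_k/k}$. On the smooth chart both sheaves are canonically $\Omega^1_{\fX_k/k}$. On the nodal chart, $\Omega^1_{\fX/R}\otimes_R k$ is free of rank one over $k[S,T]/(ST)$ with generator the image of $dS/S = -dT/T$; on the other hand $\omega_{\fX_k/k}$ for the node $k[S,T]/(ST)$ is the classical Rosenlicht dualizing sheaf, generated by the differential ``$dS/S$ on one branch, $-dT/T$ on the other,'' i.e.\ meromorphic differentials on the normalization with at worst simple poles at the two preimages of the node and opposite residues there — which is exactly what the generator of $\Omega^1_{\fX/R}\otimes_R k$ restricts to, as noted at the end of Example \ref{eg:basic.chart}. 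Matching the generators gives a canonical isomorphism, and one checks it is compatible with the gluing data so that the local isomorphisms patch. The one point requiring a little care is that forming $j_*$ genuinely commutes with the special-fiber base change $-\otimes_R k$ here; this holds because $\Omega^1_{\fX/R}$ has been shown in (1) to be invertible (hence flat over $R$), so no higher $\Tor$ obstructs the restriction, and the explicit description on the nodal chart makes the comparison with $\omega_{\fX_k/k}$ transparent.

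The main obstacle, and the place I would be most careful, is precisely this interchange of $j_*$ with reduction mod the ideal of definition in the non-Noetherian setting of part (3): one cannot blithely quote ``cohomology and base change,'' so the cleanest route is to bootstrap off part (1) — once $\Omega^1_{\fX/R}$ is known to be an honest line bundle, its restriction to $\fX_k$ is computed \'etale-locally by the explicit generators above, and the identification with the relative dualizing sheaf of the node is the classical Rosenlicht description. Everything else is a routine unwinding of the two standard charts and the \'etale-local structure theorem for semistable curves.
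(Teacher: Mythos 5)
Your proposal is correct and follows essentially the same route as the paper: flat (\'etale) base change for $j_*$ gives (2), which together with the computation of Example~\ref{eg:basic.chart} on the chart $\Spec(R[S,T]/(ST-\varpi))$ gives (1), and (3) is the identification of the restriction with the classical Rosenlicht dualizing sheaf via the explicit generator $dS/S=-dT/T$. The only cosmetic difference is that the paper packages part (3) as a canonical comparison map $\phi\colon i^*\Omega^1_{\fX/R}\to\bar j_*\Omega^1_{\fU_k/k}$ arising from the Cartesian square and identifies its image, which is the same device you propose for avoiding the base-change-of-$j_*$ issue you correctly flag.
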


\begin{proof}
  First we treat~(2).  Let $j'\colon\fU'\injects\fX'$ be the inclusion of the smooth
  locus of $\fX'$.  Then $f^{-1}(\fU) = \fU'$ and
  $f^*\Omega^1_{\fU/R} = \Omega^1_{\fU'/R}$, so
  $j'_*\Omega^1_{\fU'/R} = f^*j_*\Omega^1_{\fU/R}$ by cohomology and base change
  for flat morphisms.  The first assertion is an immediate consequence of this
  and Example~\ref{eg:basic.chart}, as every singular point of $\fX$ has an
  \'etale neighborhood which is \'etale over $\Spec(R[S,T]/(ST-\varpi))$
  for some $\varpi$.

  The Cartesian square
  \[\xymatrix @=.2in{
    {\fU_k} \ar[r]^i \ar[d]_{\bar j} & {\fU} \ar[d]^j  \\
    {\fX_k} \ar[r]_i & {\fX} }\]
  gives rise to a natural homomorphism
  $\phi\colon i^*\Omega^1_{\fX/R}\to\bar j_*\Omega^1_{\fU_k/k}$.  By construction this
  is an isomorphism on $\fU_k$.  Working \'etale-locally, it is clear from
  Example~\ref{eg:basic.chart} that $\phi$ is injective and that its image has
  the following description.  Let $\pi\colon\td\fX_k\to\fX_k$ be the normalization.
  Then a section in the image of $\phi$ in a neighborhood of a singular
  point $\td x\in\fX_k$ pulls back to a meromorphic section of
  $\Omega^1_{\td\fX_k/k}$ with at worst simple poles at the points of
  $\pi^{-1}(\td x)$, such that the residues sum to zero.  Therefore
  $i^*\Omega^1_{\fU_k/k}$ is the sheaf of classical Rosenlicht differentials,
  which is well known to be a dualizing sheaf.
\end{proof}

\subsubsection{Interpretation in terms of skeletons}
Now we suppose that $K$ is algebraically closed and that $\fX$ is a proper
semistable $R$-curve with smooth, connected generic fiber $X$.  As above, we let
$\Gamma_\fX$ denote the associated skeleton, considered as a vertex-weighted
metric graph.

\begin{lemma}\label{lem:rosenlicht.degree}
  Let $\zeta\in\Gamma_\fX$ be a vertex, and let
  $D_\zeta\subset\fX_k$ be the corresponding irreducible component.  Then
  \begin{equation}\label{eq:deg.rosenlicht}
    \deg(\Omega^1_{\fX/R}|_{D_\zeta}) = 2g(\zeta) - 2 + \deg(\zeta),
  \end{equation}
  where $g(\zeta)$ is the weight of $\zeta$ and $\deg(\zeta)$ is the
  valency of $\zeta$ in $\Gamma_\fX$.
\end{lemma}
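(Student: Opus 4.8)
The plan is to reduce the statement to the identity from Lemma~\ref{lem:rosenlicht.invertible}(3) that $\Omega^1_{\fX/R}|_{\fX_k}$ is the relative dualizing sheaf $\omega_{\fX_k/k}$, and then compute the degree of that dualizing sheaf restricted to a single component $D = D_\zeta$ of the nodal curve $\fX_k$. First I would recall the standard adjunction-type formula for the dualizing sheaf of a semistable (hence nodal) curve over an algebraically closed field: for a nodal curve $Y$ over $k$ with irreducible component $D$, one has $\omega_{Y/k}|_D \cong \omega_{D/k}(\Delta_D)$, where $\Delta_D$ is the divisor on the normalization $\widetilde D$ supported on the preimages of the nodes of $Y$ lying on $D$. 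Taking degrees, $\deg(\omega_{Y/k}|_D) = (2g(\widetilde D) - 2) + \#\Delta_D$, since $\deg\omega_{D/k} = 2g(\widetilde D)-2$ on the smooth projective curve $\widetilde D$.

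Next I would translate the right-hand side into skeleton combinatorics. By definition (Section~\ref{sec:skeletons}) the weight $g(\zeta)$ of the vertex $\zeta$ is the geometric genus of $D_\zeta$, i.e., $g(\widetilde D) = g(\zeta)$. The number $\#\Delta_D$ counts, with multiplicity, the points of $\widetilde D$ mapping to nodes of $\fX_k$; these correspond exactly to the branches at $\zeta$ of the edges of $\Gamma_\fX$ emanating from $\zeta$ — a non-loop edge contributes one such branch point, and a loop edge at $\zeta$ contributes two. In both cases the total is the valency $\deg(\zeta)$ of $\zeta$ in the metric graph $\Gamma_\fX$, where we count half-edges. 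This gives $\deg(\Omega^1_{\fX/R}|_{D_\zeta}) = 2g(\zeta) - 2 + \deg(\zeta)$, which is~\eqref{eq:deg.rosenlicht}.

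One technical point to handle carefully, in parallel with the proof of Theorem~\ref{thm:slope.formula.2}: the dualizing-sheaf adjunction formula as usually stated assumes each component is smooth, which fails if $D_\zeta$ has a self-node, i.e., if there is a loop edge at $\zeta$. As in the proof of Theorem~\ref{thm:slope.formula.2}, this is resolved by blowing up $\fX$ to subdivide each loop edge, which adds a genus-$0$ valency-$2$ vertex in its interior without changing the left-hand side (the blow-up is an isomorphism near the generic point of $D_\zeta$, so the restriction of $\Omega^1_{\fX/R}$ to $D_\zeta$ is unchanged) or the right-hand side (such a subdivision does not alter $g(\zeta)$ or $\deg(\zeta)$). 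Alternatively one can appeal directly to Example~\ref{eg:basic.chart}: working étale-locally near each node shows that $\Omega^1_{\fX/R}|_{D_\zeta}$ is precisely $\omega_{\widetilde D/k}$ twisted to allow a simple pole at each branch point over a node, which is the local description already extracted in the proof of Lemma~\ref{lem:rosenlicht.invertible}.

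The main obstacle is the bookkeeping around loop edges and the matching of "branch points over nodes on $D_\zeta$" with "half-edges at $\zeta$ in $\Gamma_\fX$"; once the loop-edge case is dispatched by blowing up (or handled directly via the local charts of Example~\ref{eg:basic.chart}), the degree computation is the classical Riemann--Roch statement for the dualizing sheaf of a nodal curve and is essentially immediate.
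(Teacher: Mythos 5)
Your proposal is correct and is exactly the argument the paper intends: the paper's proof of this lemma is the one-line statement that it is ``an immediate consequence of the definitions and Lemma~\ref{lem:rosenlicht.invertible}(3),'' and your write-up simply fleshes out that route (identify the restriction with the dualizing sheaf of $\fX_k$, apply the classical degree formula $\deg\omega_{\fX_k/k}|_{D_\zeta} = 2g(\widetilde D_\zeta)-2+\#\Delta_{D_\zeta}$, and match geometric genus with the weight and branch points over nodes with half-edges at $\zeta$). Your careful treatment of loop edges is a welcome addition but not a deviation from the paper's approach.
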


\begin{proof}
  This is an immediate consequence of the definitions and
  Lemma~\ref{lem:rosenlicht.invertible}(3).
\end{proof}

The formal metric on $\Omega^1_{X/K}$ coming from $\Omega^1_{\fX/R}$ can be
computed explicitly on $\Gamma_\fX$, as follows.  Let $e\subset\Gamma_\fX$ be an
open edge, and let $A = \tau^{-1}(e)$ be an open annulus.  Choose an isomorphism
$T\colon A\isomto\bS(\varpi)_+$ with a standard open annulus.

\begin{lemma}\label{lem:nonv.sec.annulus}
  With the above notation, let $\omega = f(T)\,dT/T$ be the restriction of a
  nonzero meromorphic section of $\Omega^1_{X/K}$ to $A$.  Then
  $\|\omega\|_{\Omega^1_{\fX/R}}=|f|$ on $A$.
\end{lemma}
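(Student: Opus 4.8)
The plan is to reduce the lemma to the single statement that $dT/T$ is a nowhere‑vanishing section of $\Omega^1_{\fX/R}$ over $A$ whose formal‑metric norm is identically $1$. Granting this, the lemma is immediate from the definition of the formal metric recalled in Section~\ref{sec:slope.formula}: writing $\omega = f(T)\,(dT/T)$ with $dT/T$ a nonvanishing section of $\Omega^1_{\fX/R}$ near each point of $A$, one gets $\|\omega(x)\|_{\Omega^1_{\fX/R}} = |f(x)|$ wherever $\omega$ is regular, i.e.\ $\|\omega\|_{\Omega^1_{\fX/R}} = |f|$ on $A$.

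To establish the displayed claim, first I would recall that the open edge $e$ corresponds to a node $\td x$ of $\fX_k$ and that $A = \tau^{-1}(e) = \red^{-1}(\td x)$ is exactly the formal fiber over $\td x$ (Section~\ref{sec:skeletons}). As in the proof of Lemma~\ref{lem:rosenlicht.invertible}, there is an étale neighborhood of $\td x$ in $\hat\fX$ that is étale over the standard chart $\Spec\big(R[S_0,T_0]/(S_0T_0 - \varpi)\big)$, carrying $\td x$ to the origin; since an étale morphism with trivial residue extension induces an isomorphism of completed local rings, and hence of formal fibers, the coordinate $T_0$ restricts on $A$ to a standard open‑annulus coordinate. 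By Example~\ref{eg:basic.chart}, $\Omega^1$ of the standard chart is freely generated by $dS_0/S_0 = -\,dT_0/T_0$, and by Lemma~\ref{lem:rosenlicht.invertible}(2) (compatibility of integral Rosenlicht differentials with étale base change) it follows that $dT_0/T_0$ generates $\Omega^1_{\fX/R}$ over $A$. Being (up to sign) the restriction of a nonvanishing section of the integral model sheaf $\Omega^1_{\fX/R}$, it has formal‑metric norm $1$ at every point of $A$, directly from the construction of $\|\scdot\|_{\Omega^1_{\fX/R}}$.

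Next I would reconcile $T_0$ with the coordinate $T$ in the statement. Both $T$ and $T_0$ are isomorphisms of $A$ onto a standard open annulus, and since the logarithmic modulus is an isomorphism invariant both target $\bS(\varpi)_+$ for the same $\varpi$. Hence $T = T_0\circ\phi^{-1}$ for an automorphism $\phi$ of $\bS(\varpi)_+$; using the standard description $\Aut_K(\bS(\varpi)_+) = \{T\mapsto aT,\ T\mapsto a\varpi/T : |a| = 1\}$, this means $T = a\,T_0^{\pm 1}$ with $|a| = 1$, so $dT/T = \pm\,dT_0/T_0$. Therefore $dT/T$ is also a generator of $\Omega^1_{\fX/R}|_A$ of constant norm $1$, which is what was needed, and the lemma follows.

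I expect the main obstacle to be precisely the bookkeeping in the two middle steps: identifying $dT/T$, for the given \emph{abstract} annulus coordinate $T$, with the canonical (up to sign) trivialization of $\Omega^1_{\fX/R}|_A$. This rests on the fact that the formal fiber over a node carries a preferred class of coordinates all differing by automorphisms of the standard annulus, on the explicit form of $\Aut_K(\bS(\varpi)_+)$, and on the compatibility of the étale‑local standard‑chart coordinate with this class. Once these are in hand, Example~\ref{eg:basic.chart}, Lemma~\ref{lem:rosenlicht.invertible}(2), and the definition of the formal metric plug in with no further computation.
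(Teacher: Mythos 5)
Your overall strategy is the same as the paper's: reduce to showing that $dT/T$ is a nonvanishing section of $\Omega^1_{\fX/R}$ of constant norm $1$ on $A$, prove this for the standard-chart coordinate via the \'etale-local identification with $\Spec(R[S_0,T_0]/(S_0T_0-\varpi))$, Example~\ref{eg:basic.chart}, and Lemma~\ref{lem:rosenlicht.invertible}(2), and then compare an arbitrary annulus parameter $T$ with the chart coordinate $T_0$. The paper does these steps in the opposite order (parameter-independence first, then the \'etale-local computation), but that is immaterial.

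The genuine gap is in your comparison step. The group $\Aut_K(\bS(\varpi)_+)$ is \emph{not} $\{T\mapsto aT,\ T\mapsto a\varpi/T : |a|=1\}$: by the structure theorem for units on an annulus (\cite[Proposition~2.2(1)]{baker_payne_rabinoff13:analytic_curves}), any second coordinate has the form $T' = aT^{\pm 1}(1+g)$ with $|a|$ suitable and $g$ analytic with $|g(x)|<1$ for all $x$, and such maps with $g\neq 0$ are honest automorphisms (e.g.\ $T\mapsto T(1+cT)$ for $|c|$ small). Consequently your conclusion $dT/T = \pm\,dT_0/T_0$ is false in general: one only gets
\[ \frac{dT'}{T'} = \Big(\pm 1 + \frac{Tg'(T)}{1+g}\Big)\,\frac{dT}{T} = h(T)\,\frac{dT}{T}, \]
and the content of the paper's first paragraph is precisely the estimate that $|h(x)|=1$ for all $x\in\bS(\varpi)_+$ (which uses $|g|<1$ to control $|Tg'(T)|$). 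That weaker statement is all you need --- it still gives $\|dT/T\| = \|dT_0/T_0\| = 1$ pointwise --- so the error is localized and fixable, but as written the step asserting the form of the automorphism group, and the resulting identity $dT/T=\pm dT_0/T_0$, is incorrect and must be replaced by the unit-structure computation.
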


\begin{proof}
  First suppose that $T'\colon A\isomto\bS(\varpi)_+$ is a different isomorphism.  A
  calculation
  using~\cite[Proposition~2.2(1)]{baker_payne_rabinoff13:analytic_curves},
  the explicit description of the units on $\bS(\varpi)_+$, shows that
  $dT'/T' = g(T)\,dT/T$ for an invertible analytic function $g$ on $\bS(\varpi)_+$
  such that $|g(x)| = 1$ for all $x\in\bS(\varpi)_+$.  Hence the lemma is true
  for $T'$ if and only if it is true for $T$, so we may choose any parameter $T$
  that we like.

  Let $\td x\in\fX_k$ be the nodal point such that $A = \red^{-1}(\td x)$, let
  $\phi\colon\fU\to\fX$ be an \'etale neighborhood of $\td x$, and let $\td y\in\fU$
  be an inverse image of $\td x$.  Then $\phi$ induces an isomorphism
  $\red^{-1}(\td y)\isomto\red^{-1}(\td x) = A$.  Similarly, if
  $\psi\colon\fU\to\Spec(R[S,T]/(ST-\varpi))$ is an \'etale morphism sending $\td y$
  to the origin $\td z$, then $\psi$ induces an isomorphism
  $\red^{-1}(\td y)\isomto\red^{-1}(\td z)$.  Now the lemma follows from
  Example~\ref{eg:basic.chart}, where it was shown that $dT/T$ is a nonvanishing
  section of the sheaf of integral Rosenlicht differentials in a neighborhood of
  $\td z$.
\end{proof}

The next lemma says that the restriction of $\|\scdot\|_{\Omega^1_{\fX/R}}$ to
$\Gamma_\fX$ is compatible with refinement of the semistable model giving the skeleton.

\begin{lemma}\label{lem:indep.of.skel}
  Let $\fX,\fX'$ be two semistable models of $X$, and suppose that there exists
  a (necessarily unique) morphism $\fX'\to\fX$ inducing the identity on $X$.
  Then $\Gamma_\fX\subset\Gamma_{\fX'}$, and we have
  $\|\scdot\|_{\Omega^1_{\fX'/R}}\big|_{\Gamma_{\fX}} =
  \|\scdot\|_{\Omega^1_{\fX/R}}|_{\Gamma_{\fX}}$.
\end{lemma}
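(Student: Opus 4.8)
\emph{Plan.} The first assertion is essentially formal. A morphism $\fX'\to\fX$ of semistable $R$-models inducing the identity on $X$ corresponds, under the standard dictionary between semistable models and skeletons (see~\cite{baker_payne_rabinoff13:analytic_curves}), to an inclusion of the associated semistable vertex sets in $X^\an$; consequently $\Gamma_\fX\subseteq\Gamma_{\fX'}$, and the retraction onto the coarser skeleton factors as $\tau=\rho\circ\tau'$, where $\rho\colon\Gamma_{\fX'}\to\Gamma_\fX$ is the retraction of the larger graph onto its subgraph $\Gamma_\fX$ and restricts to the identity on $\Gamma_\fX$. In particular each edge $e$ of $\Gamma_\fX$ is subdivided into finitely many edges $e_1,\dots,e_m$ of $\Gamma_{\fX'}$, meeting in finitely many new type‑$2$ vertices lying in the interior of $e$.

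For the statement about metrics I would first observe that $\Omega^1_{\fX/R}$ and $\Omega^1_{\fX'/R}$ are line bundles (Lemma~\ref{lem:rosenlicht.invertible}) whose generic fibres are both $\Omega^1_{X/K}$. Hence $\|\scdot\|_{\Omega^1_{\fX/R}}$ and $\|\scdot\|_{\Omega^1_{\fX'/R}}$ are two \emph{continuous} metrics on the single line bundle $\Omega^1_{X/K}$ viewed on $X^\an$, so their ratio is a well‑defined continuous positive function on $X^\an$. It therefore suffices to check that this ratio equals $1$ on a dense subset of $\Gamma_\fX$; the subset I would use is obtained from $\Gamma_\fX$ by deleting the (finitely many) vertices of $\Gamma_\fX$ together with the new vertices created by the subdivision, i.e.\ $\bigcup_e\bigcup_i\tau'^{-1}(e_i)$, the union running over edges $e$ of $\Gamma_\fX$ and their sub‑edges $e_i$.

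Now fix such an $e_i$ and set $A_i=\tau'^{-1}(e_i)$, the open annulus in $X^\an$ lying over the corresponding node of $\fX'_k$. Since $\tau=\rho\circ\tau'$ and $\rho$ fixes $e\supseteq e_i$ pointwise, $A_i$ is contained in the open annulus $A=\tau^{-1}(e)$. Given a nonzero meromorphic section $\omega$ of $\Omega^1_{X/K}$, pick a standard parameter $T$ on $A$ and a standard parameter $T_i$ on $A_i$ and write $\omega|_A=f\,\tfrac{dT}{T}$ and $\omega|_{A_i}=f_i\,\tfrac{dT_i}{T_i}$. By Lemma~\ref{lem:nonv.sec.annulus}, applied to $\fX$ on $A$ and to $\fX'$ on $A_i$, we get $\|\omega\|_{\Omega^1_{\fX/R}}=|f|$ on $A$ and $\|\omega\|_{\Omega^1_{\fX'/R}}=|f_i|$ on $A_i$. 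Finally $T_i$ and the restriction of $T$ are two standard parametrizations of the annulus $A_i$, so—exactly as recalled in the proof of Lemma~\ref{lem:nonv.sec.annulus}—one has $\tfrac{dT}{T}=g_i\,\tfrac{dT_i}{T_i}$ for an analytic unit $g_i$ on $A_i$ with $|g_i|\equiv 1$; hence $f=f_ig_i$ and $|f|=|f_i|$ on $A_i$. Thus the two metrics agree on each $A_i$, hence on a dense subset of $\Gamma_\fX$, hence on all of $\Gamma_\fX$ by continuity of formal metrics.

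I do not expect a real obstacle here: the entire analytic content is packaged in Lemma~\ref{lem:nonv.sec.annulus} (invariance of $dT/T$ under change of standard coordinate) and in Example~\ref{eg:basic.chart}, both already established. The only points that need to be handled with a little care are (i) that the formal fibres over the nodes of $\fX'_k$ created by $\fX'\to\fX$ sit \emph{inside} the formal fibres over the nodes of $\fX_k$ beneath them, which is precisely the factorization $\tau=\rho\circ\tau'$, and (ii) the passage from the dense subset $\bigcup_e\bigcup_i\tau'^{-1}(e_i)$ to all of $\Gamma_\fX$, which is where continuity of the two formal metrics (equivalently, of the distortion function between them) is used.
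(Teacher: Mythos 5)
Your proof is correct and follows essentially the same route as the paper's: reduce to the open edges of $\Gamma_\fX$, note that a sub-edge $e_i$ of an edge $e$ corresponds to a subannulus $A_i\subset A=\tau^{-1}(e)$ on which the parameter $T$ restricts to a parameter, and invoke Lemma~\ref{lem:nonv.sec.annulus} together with the unit-norm invariance of $dT/T$ under change of standard coordinate. Your extra step of passing from the dense union of open sub-edges to all of $\Gamma_\fX$ by continuity of the two formal metrics is a harmless (and slightly more explicit) way of handling the vertices, which the paper's proof leaves implicit.
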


\begin{proof}
  The fact that $\Gamma_\fX\subset\Gamma_{\fX'}$ follows
  from~\cite[Proposition~3.13, Theorem~4.11]{baker_payne_rabinoff13:analytic_curves}.
  Then $\Gamma_{\fX'}$ is obtained from $\Gamma_\fX$ by subdividing some edges
  and adding some new ones.  As we are restricting to $\Gamma_\fX$, we are not
  concerned with new edges, so it suffices to show that
  $\|\scdot\|_{\Omega^1_{\fX/R}}$ is insensitive to subdividing an edge, or
  equivalently, to blowing up a node on $\fX$.  But by
  Lemma~\ref{lem:nonv.sec.annulus}, $\|\scdot\|_{\Omega^1_{\fX/R}}$ restricted
  to an open edge $e$ only depends on a parameter $T$ for $\tau^{-1}(e)$, and
  $T$ restricts to a parameter on $\tau^{-1}(e')$ for any $e'\subset e$.
\end{proof}

By virtue of Lemma~\ref{lem:indep.of.skel}, we will write
$\|\scdot\| = \|\scdot\|_{\Omega^1_{\fX/R}}\big|_{\Gamma_\fX}$ for any
semistable model $\fX$.

\begin{remark}
  Temkin~\cite{temkin:pluriforms} has developed an extremely general procedure
  for metrizing the cotangent sheaf on an analytic space, of which the above
  construction is a special case.  However, it is not obvious that the metric
  resulting from his theory restricts to a formal metric on a skeleton, and
  therefore one cannot immediately apply Theorem~\ref{thm:slope.formula.2}, as
  we do in Section~\ref{sec:canon.div.graph}.
\end{remark}

\subsubsection{Interpretation in terms of the canonical divisor of a graph}
\label{sec:canon.div.graph}
We assume still that $K$ is algebraically closed and that $\fX$ is a proper
semistable $R$-curve with smooth, connected generic fiber $X$.  The
\defi{canonical divisor} on $\Gamma_\fX$ is by definition
\begin{equation}\label{eq:canon.div.graph}
  K_{\Gamma_\fX} \coloneq \sum \big(2g(\zeta) - 2 + \deg(\zeta)\big)\,(\zeta),
\end{equation}
where the sum is taken over the vertices of $\Gamma_\fX$ (see~\cite[Definition~2.13]{abbr14:lifting_harmonic_morphism_I}).
By Lemma~\ref{lem:rosenlicht.degree}, equation~\eqref{eq:section.of.trop.bundle} becomes
\begin{equation}\label{eq:cont.specialization}
  \tau_*\div(\omega) = \div(F) + K_{\Gamma_\fX},
\end{equation}
where $\omega$ is a nonzero meromorphic $1$-form on $X$, and
$F = -\log\|\omega\|$.  In particular, if $\omega$ is a \emph{regular}
global $1$-form, then 
\begin{equation}\label{eq:section.of.trop.can.bun}
  \div(F) + K_{\Gamma_\fX}\geq 0,
\end{equation}
which formally says that $F$ is a ``section of the tropical canonical bundle.''

%%%%%%%%%%%%%%%%%%%%%%%%%%%%%%%%%%%%%%%%%%%%%%%%%%%%%%%%
\subsubsection{Interpretation in terms of intersection theory}
%%%%%%%%%%%%%%%%%%%%%%%%%%%%%%%%%%%%%%%%%%%%%%%%%%%%%%%%

Assume for this subsection that $K$ is discretely valued and $\fX$ is 
semistable, with irreducible decomposition $\fX_k = \bigcup\, C_i$ and with dual
graph $\Gamma_{\fX}$.
Let $\fL \in \Pic(\fX)$ be a line bundle, and denote by $\fL|_{\Gamma_{\fX}}$ the
divisor $\sum \left(\deg \fL|_{C_i}\right) C_i \in \Div(\Gamma_{\fX})$.  Here we
are identifying irreducible components of $\fX_k$ with vertices of $\Gamma_\fX$. A nonzero
regular section $s$ of $\fL|_X$ extends to a meromorphic section of $\fL$, and
after scaling by an element of $K$ extends to a regular section of $\fL$ (with
possible zeroes along entire components of $\fX_k$).

Write $\divv(s) = H + V$, where $H$ is the closure of $\divv(s)|_X$ and
$V = \sum n_iC_i$ is the complement $\divv(s) - H$ (so that
$\Supp(V) \subset \fX_k$). Suppose that the support of $H$ is contained in
$\fX^{\text{reg}}$; this is guaranteed if $\fX$ is regular and
$\Supp (\divv(s))|_X \subset X(K)$. Then $\sO(H) \in \Pic(\fX)$;
if additionally $\fX$ is regular, then
$\sO(H)|_{\Gamma_{\fX}} = \sum \deg (C_i \cap H)\,C_i$.

Let $f\colon \Gamma_{\fX} \to \Z$ be given by $f(C_i) = n_i$ and extended linearly on edges of $\Gamma_{\fX}$. Then, since $\fX$
is regular, the adjunction formula \cite[Theorem~9.1.36]{liu:algebraicGeometry}
gives 
\[
\sO(V)|_{\Gamma_{\fX}} = -\Delta(f) \coloneq \sum_v \sum_{e=vw} (f(w) - f(v)) (v).
\]
Since
$\fL \cong \sO(\divv(s)) \cong \sO(V) \otimes \sO(H)$, this gives the geometric
variant of Baker's specialization lemma \cite[Lemma~2.8]{Baker:specialization}:
\[
\fL|_{\Gamma_{\fX}} + \Delta(f) =  \sO(H)|_{\Gamma_{\fX}}.
\]
When $\fL$ is the relative dualizing sheaf $\omega_{\fX/R}$, this is precisely
the ``discrete'' version of~\eqref{eq:cont.specialization}, as
$\omega_{\fX/R}|_{\Gamma_\fX} = K_{\Gamma_\fX}$, and $\Delta(f)$ (resp., 
$\sO(H)_{\Gamma_\fX}$) plays the role of $\div(F)$ (resp., $\tau_*\div(s)$).
From the point of view of chip firing, this formula has a more colloquial description: the vanishing of $s$ along components of $\fX_k$ gives exactly the firing sequence witnessing the linear equivalence of $\fL|_{\Gamma_{\fX}}$ with the divisor $\sO(H)|_{\Gamma_{\fX}}$ on the graph $\Gamma_{\fX}$.

%----------------------------------------------------------------------------
\subsection{Basic wide open subdomains}\label{sec:basic.wide.open}
%---------------------------------------------------------------------------- 
Assuming now that $K$ is algebraically closed, we return to the notation
of Section~\ref{sec:skeletons}.  Fix a split semistable $R$-model $\fX$ of $X$ and a
type-$2$ point $\zeta\in\Gamma_\fX$.
An \defi{open star neighborhood} of $\zeta$ in $\Gamma_\fX$ is a
simply connected open neighborhood of the form
$V = \{\zeta\}\djunion\bigcup_{i=1}^r e_i$, where $e_i$ is an open interval of
length in $\Lambda$ contained in an edge of $\Gamma_\fX$ and containing
$\zeta$ in its closure, and $r\geq 1$.  The inverse image $U = \tau^{-1}(V)$ of
an open star neighborhood is called a \defi{basic wide open subdomain} of
$X^\an$.  The \defi{central point} of $U$ is $\zeta$ and the
\defi{underlying affinoid} of $U$ is $Y \coloneq \tau^{-1}(\zeta)$.
After a suitable blowing up on the special fiber, we can arrange that $\zeta$ is
a vertex of $\Gamma_\fX$.  In this case, if
$D_\zeta\subset\fX_k$ is the irreducible component with generic point
$\td\zeta =\red(\zeta)$, then $Y = \red^{-1}(D_\zeta^\sm)$, where $D_\zeta^\sm$
is the set of smooth points of $\fX_k$ lying on $D_\zeta$.  Hence
\[ Y\setminus\{\zeta\}\cong\Djunion_{\td x\in D_\zeta^\sm}\red^{-1}(\td x), \]
where for any smooth point $\td x\in\fX_k$, the inverse image
$\red^{-1}(\td x)$ is isomorphic to an open unit disc.  Moreover, we have
$U\setminus Y = \Djunion\tau^{-1}(e_i)$, with each
$A_i \coloneq\tau^{-1}(e_i)$ isomorphic to an open annulus.  The closure of
$A_i$ in $X^\an$ is equal to $A_i\djunion\{\zeta_i,\zeta\}$, where
$\zeta_i\in\Gamma_\fX$ is the other endpoint of $e_i$, which is a type-$2$
point not contained in $U$.  We call $\zeta_i$ the \defi{end} of $U$ associated
to $A_i$. 

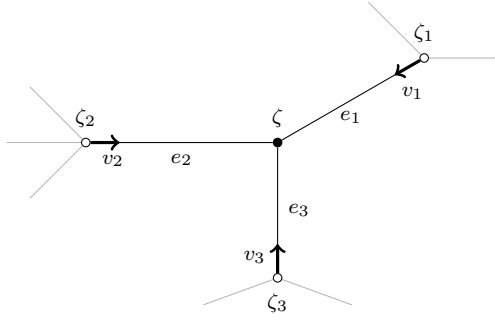
\begin{figure}[ht]
  \centering
  \begin{tikzpicture}[scale=1.5,
    every circle node/.style={draw, inner sep=.4mm},
    every node/.style={font=\tiny},
    other edge/.style={black!30!white, very thin}]
    \node (Z) at (0,0) [circle, fill=black, label=above:$\zeta$] {};
    \draw (Z) ++(30:1.5cm) 
          node (Z1) [circle, label=above:$\zeta_1$] {};
    \draw[other edge] (Z1) -- ++(0:.7cm)
          (Z1) -- ++(135:.7cm);
    \draw (Z) ++(180:1.7cm) 
          node (Z2) [circle, label=above:$\zeta_2$] {};
    \draw[other edge] (Z2) -- ++(135:.7cm)
          (Z2) -- ++(180:.7cm)
          (Z2) -- ++(225:.7cm);
    \draw (Z) ++(270:1.2cm) 
          node (Z3) [circle, label=below:$\zeta_3$] {};
    \draw[other edge] (Z3) -- ++(200:.7cm)
          (Z3) -- ++(340:.7cm);
    \path (Z) edge node [below] {$e_1$} (Z1) 
          (Z) edge node [below] {$e_2$} (Z2) 
          (Z) edge node [right] {$e_3$} (Z3);
    \draw[very thick, ->] (Z1) -- ($(Z1)!.3cm!(Z)$)
          node [anchor=135] {$v_1$};
    \draw[very thick, ->] (Z2) -- ($(Z2)!.3cm!(Z)$)
          node [anchor=70] {$v_2$};
    \draw[very thick, ->] (Z3) -- ($(Z3)!.3cm!(Z)$)
          node [anchor=30] {$v_3$};
  \end{tikzpicture}
  
  \caption{An open star neighborhood of a type-$2$ point $\zeta$ in a skeleton
    $\Gamma_\fX$ and associated notations.  The tangent vectors illustrate the
    statement of Proposition~\ref{prop:zeros.on.wide.open}.}
  \label{fig:wide.open}
\end{figure}

\begin{remark}
  Our definitions of basic wide open subdomains and their ends are equivalent to
  those of Coleman~\cite[Section~3]{ColemanReciprocity} under the identification of a
  Berkovich analytic space and its corresponding rigid space.  More precisely,
  any basic wide open subdomain of $X^\an$ in Coleman's sense is the inverse
  image of an open star neighborhood of a type-$2$ point of \emph{some} skeleton
  $\Gamma$ of $X$.
\end{remark}

\begin{remark}
  The open star neighborhood $V$ deformation retracts onto $\zeta$, and the
  deformation retraction of $X^\an$ onto its skeleton retracts $U$ onto $V$.
  Therefore a basic wide open subdomain is contractible.
\end{remark}

We will use the following fundamental relationship between the number of zeros
of an analytic function on $U$ and the slopes of its valuation at the ends.
This is called the ``mass formula''
in~\cite[Proposition~5.30]{baker_rumely10:book}, where it is proved for
basic wide open subdomains of $\PP^{1,\an}$. The situation in the proposition below is illustrated in Figure~\ref{fig:wide.open}.

\begin{proposition}\label{prop:zeros.on.wide.open}
  Let $U\subset X^\an$ be a basic wide open neighborhood with underlying
  affinoid $Y$, annuli $A_1\djunion\cdots\djunion A_r = U\setminus Y$, and
  corresponding ends $\zeta_1,\ldots,\zeta_r$.  Let $v_i$ denote the tangent
  direction at $\zeta_i$ in the direction of $A_i$.  Let $f$ be a nonzero
  meromorphic function on $U$ which extends to a meromorphic function on a
  neighborhood of $\{\zeta_1,\ldots,\zeta_r\}$, and let $F = -\log|f|$.  Then
  \[ \deg\big(\div(f|_U)\big) = \sum_{i=1}^r d_{v_i} F(\zeta_i). \]
\end{proposition}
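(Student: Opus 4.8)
The plan is to reduce the statement to an application of the slope formula (Theorem~\ref{thm:slope.formula}), combined with a local computation on the affinoid $Y$ and on each annulus $A_i$. First I would observe that $f$ is meromorphic on the contractible space $U$, and that $U$ deformation retracts onto the open star neighborhood $V = \{\zeta\}\cup\bigcup_i e_i$. The quantity $\deg(\div(f|_U))$ counts zeros minus poles of $f$ inside $U$, all of which lie in $Y\setminus\{\zeta\}$ (a disjoint union of open discs) since an analytic function on an annulus or at a type-$2$ point contributes no zeros there beyond what is recorded by its slopes --- more precisely, $\div(f|_U)$ is supported on $\bigcup_{\td x\in D_\zeta^\sm}\red^{-1}(\td x)$. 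By the tropical theory, $\tau_*\div(f|_U)$ is a divisor supported on $V$, and since each disc $\red^{-1}(\td x)$ retracts to the single point $\zeta$, in fact $\tau_*\div(f|_U) = \deg(\div(f|_U))\cdot(\zeta)$.

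Next I would invoke the slope formula in the form appropriate to $U$. Extend $f$ to a meromorphic function on a neighborhood of $\{\zeta_1,\ldots,\zeta_r\}$, and set $F = -\log|f|$, a piecewise affine function with integer slopes on the relevant portion of $\Gamma_\fX$. On each open edge $e_i = \tau^{-1}(A_i)$'s skeleton, $F$ is affine-linear with integer slope (by~\cite[Proposition~2.10(1)]{baker_payne_rabinoff13:analytic_curves} applied to the annulus, since $f$ has no zeros or poles on $A_i$), and the relation $\div(F|_{e_i}) = \tau_*\div(f|_{A_i}) = 0$ confirms $F$ is linear on $e_i$. Now compute $\ord_\zeta(F)$, the sum of incoming slopes of $F$ at $\zeta$ along the edges $e_1,\ldots,e_r$. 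Because $F$ is linear on each $e_i$, the incoming slope at $\zeta$ along $e_i$ equals $-d_{v_i}F(\zeta_i)$, where $v_i$ is the tangent direction at $\zeta_i$ pointing into $A_i$ (the slope is constant along $e_i$, and the orientation flips between the two endpoints). Hence $\ord_\zeta(F) = -\sum_{i=1}^r d_{v_i}F(\zeta_i)$.

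To close the loop, I would apply the slope formula (Theorem~\ref{thm:slope.formula}, or its sharper local version for annuli and discs used in its proof): for a meromorphic function $f$ on $U$, one has $\ord_\zeta(-\log|f|) = -\deg(\div(f|_{\tau^{-1}(\zeta)}))$ modulo the contributions along the edges, and since all of $\div(f|_U)$ retracts to $\zeta$ through the discs in $Y\setminus\{\zeta\}$, the precise statement is $\deg(\div(f|_U)) = -\ord_\zeta(F)$. Combining this with the computation $\ord_\zeta(F) = -\sum_i d_{v_i}F(\zeta_i)$ gives $\deg(\div(f|_U)) = \sum_{i=1}^r d_{v_i}F(\zeta_i)$, as desired.

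The main obstacle I anticipate is bookkeeping the sign and orientation conventions cleanly --- in particular being careful that $d_{v_i}F(\zeta_i)$ is measured with $v_i$ pointing \emph{into} $A_i$ (away from $Y$), which is the opposite orientation from the incoming-slope-at-$\zeta$ picture, so these differ by a sign that precisely cancels the sign in $\ord_\zeta(F) = -\sum_v d_v F(\zeta)$. A secondary technical point is justifying that $f|_U$ has only finitely many zeros and poles and that they all retract to $\zeta$; this follows because $U\setminus Y$ consists of annuli (where an analytic nonvanishing hypothesis or the absence of zeros is recorded entirely by slopes) and $Y$ is an affinoid whose only skeleton point is $\zeta$, so $\tau_*$ collapses everything in $Y$ to $\zeta$. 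One must also note that $f$ need only be assumed meromorphic near the ends (not on all of $X^\an$) for the edge-slope computation to make sense, which is exactly the hypothesis given.
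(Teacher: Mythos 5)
Your overall strategy (the slope formula plus a combinatorial computation on the star $V$) is exactly the one the paper has in mind, but as written the argument has a genuine gap: you assert that $\div(f|_U)$ is supported on the residue discs $\red^{-1}(\td x)\subset Y$, i.e.\ that $f$ has no zeros or poles on the annuli $A_i$, and you use this to conclude that $F$ is affine-linear on each edge $e_i$ and that $\tau_*\div(f|_U)=\deg(\div(f|_U))\,(\zeta)$. Nothing in the hypotheses forces this: a meromorphic function on $U$ can perfectly well vanish at classical points of an open annulus (e.g.\ $T-c$ with $|\varpi|<|c|<1$), in which case $F$ has kinks along $e_i$, the pushforward $\tau_*\div(f|_U)$ has mass on the open edges, and your chain of identities $\ord_\zeta(F)=-\sum_i d_{v_i}F(\zeta_i)$ and $\deg(\div(f|_U))=-\ord_\zeta(F)$ both fail. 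The conclusion survives only because the extra zeros on $A_i$ are exactly compensated by the divisor of $F$ along $e_i$, which your argument never sees.

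The fix is to run the combinatorics over the whole closed star $\bar V=V\cup\{\zeta_1,\dots,\zeta_r\}$ rather than only at $\zeta$: the local slope formula (\cite[Proposition~2.10]{baker_payne_rabinoff13:analytic_curves} on each $A_i$, and the argument at $\zeta$ from the proof of Theorem~\ref{thm:slope.formula.2}) gives $\ord_x(F)=(\tau_*\div(f|_U))(x)$ for every $x$ in the open star, including interior points of the edges; at each leaf $\zeta_i$ of $\bar V$ the only tangent direction is $v_i$, so $\ord_{\zeta_i}(F)=-d_{v_i}F(\zeta_i)$. Since $\bar V$ is a compact metric graph and $\deg(\div(F))=0$ on such a graph, summing gives $0=\deg(\div(f|_U))-\sum_i d_{v_i}F(\zeta_i)$, which is the assertion with no hypothesis on where the zeros lie. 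Separately, note that even in your restricted setting the two sign errors you make cancel: with the paper's convention that $\ord_x(F)$ is the sum of \emph{incoming} slopes, the incoming slope at $\zeta$ along a linear $e_i$ equals $+d_{v_i}F(\zeta_i)$ (not $-d_{v_i}F(\zeta_i)$), and the slope formula gives $\deg(\div(f|_U))=+\ord_\zeta(F)$ (not $-\ord_\zeta(F)$).
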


\begin{proof}
  This follows from Theorem~\ref{thm:slope.formula} and an easy combinatorial
  argument. 
\end{proof}
%----------------------------------------------------------------------------
\subsection{de Rham cohomology of a basic wide open}
\label{sec:de.rham}
%---------------------------------------------------------------------------- 
In this section we assume that $K = \C_p$ for a prime $p$.  We will need Coleman's calculation of the
de Rham cohomology of a basic wide open $U = Y\djunion\Djunion_{i=1}^r A_i$.
This calculation does not depend on the ambient curve $X$, so by gluing closed
discs onto the annuli $A_i$ we may assume that $X$ has good reduction and that
$U$ is the complement in $X$ of finitely many closed discs contained in distinct
residue discs.  Let $S\subset X(\C_p)\setminus U(\C_p)$ be a choice of $r$
points, one in each deleted disc.

For a scheme $Z$ over $\C_p$, we let $H^1_\dR(Z)^\alg$ denote the algebraic de
Rham cohomology of $Z$ over $\C_p$, so $H^1_\dR(X)^\alg$ is a $\C_p$-vector
space of dimension $2g$, where $g$ is the genus of $X$.  We define
\[ H^1_\dR(U) = \Omega^1_{U/\C_p}(U)/d\sO(U), \]
the analytic differential forms modulo the exact differentials.
Coleman~\cite[Theorem~4.2]{ColemanReciprocity} proves that the natural
restriction map
\[ H^1_\dR(X\setminus S)^\alg \To H^1_\dR(U) \]
is an isomorphism.  In particular,
\begin{equation}\label{eq:dim.h1}
\dim_{\C_p} H^1_\dR(U) = \dim_{\C_p} H^1_\dR(X\setminus S)^\alg = 2g - 1 +
\#S = 2g - 1 + r,
\end{equation}
where $r$ is the number of deleted discs.  (The algebraic de Rham cohomology can
be calculated using a comparison theorem over $\C$, for example.)

Let $T$ be a parameter on the annulus $A_i$, normalized so that $|T(x)|\searrow 1$
as $x\to\zeta$, the central point of $U$.  Let
$\omega\in\Omega_{A_i/\C_p}^1(A_i)$, and write
\[ \omega = \sum_{n=-\infty}^\infty a_n T^n \frac{dT}T. \]
The \defi{residue} of $\omega$ is defined to be $\Res(\omega) = a_0$.  This is
independent of the parameter $T$ up to a sign that is determined by the
orientation of the annulus, which we have fixed.  Clearly
the residue of an exact differential is zero, so $\Res$ defines a homomorphism
$H^1_\dR(A_i)\to\C_p$.

\begin{theorem}[Coleman~{\cite[Proposition~4.3, Proposition~4.4]{ColemanReciprocity}}]\label{thm:H1dR}
  The following sequence is exact:
  \[ 0 \To H^1_\dR(X)^\alg \To H^1_\dR(U) \xrightarrow{\Dsum\Res}
  \Dsum_{i=1}^r \C_p \overset{\sum}\To \C_p \To 0. \]
\end{theorem}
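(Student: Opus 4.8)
The plan is to establish exactness of
\[ 0 \To H^1_\dR(X)^\alg \To H^1_\dR(U) \xrightarrow{\Dsum\Res}
\Dsum_{i=1}^r \C_p \overset{\sum}\To \C_p \To 0 \]
at each of its four nonzero spots, using the dimension count~\eqref{eq:dim.h1} together with Coleman's cited results on the de Rham cohomology of wide opens and a residue theorem. First I would observe that exactness at the first two spots is essentially the content of Coleman's isomorphism $H^1_\dR(X\setminus S)^\alg\isomto H^1_\dR(U)$: the composite $H^1_\dR(X)^\alg\to H^1_\dR(U)$ is the restriction map, which is injective because a global regular $1$-form that becomes exact on $U$ must already be exact on $X$ (a form exact on the wide open complement of finitely many residue discs has no periods, hence is exact after the comparison with $X\setminus S$, hence is $d$ of a rational function with poles only at $S$; but a global regular form has no poles, so it is exact on $X$ and thus zero in $H^1_\dR(X)^\alg$). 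Its image is exactly the kernel of $\Dsum\Res$ because a class in $H^1_\dR(X\setminus S)^\alg$ has all residues vanishing at the punctures precisely when it extends to a class on $X$; this is the standard exact sequence relating the de Rham cohomology of an open curve to that of its completion, which one can import from the algebraic theory (e.g.\ via the comparison with $\C$, as indicated after~\eqref{eq:dim.h1}), or directly from~\cite[Proposition~4.3]{ColemanReciprocity}.

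Next I would handle exactness at $\Dsum_{i=1}^r \C_p$. The map $\sum$ is obviously surjective (it is nonzero and lands in a one-dimensional space, as $r\geq 1$). The inclusion $\im(\Dsum\Res)\subset\ker(\sum)$ is the residue theorem: the sum of the residues of an analytic $1$-form on $U$ over all its ends is zero. This can be deduced from Proposition~\ref{prop:zeros.on.wide.open} applied to an exact antiderivative locally, or more directly from the fact that for a meromorphic $1$-form on $X$ with poles only in $S$ the sum of all residues vanishes, together with Coleman's identification of $H^1_\dR(U)$ with $H^1_\dR(X\setminus S)^\alg$ and the observation that the residue at the end associated to $A_i$ agrees with the residue at the point $s_i\in S$ (up to the fixed orientation). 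The reverse inclusion $\ker(\sum)\subset\im(\Dsum\Res)$ then follows by a dimension count once all the other inclusions are established.

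Finally I would assemble the dimension count to pin down exactness everywhere simultaneously. We have $\dim H^1_\dR(X)^\alg = 2g$, $\dim H^1_\dR(U) = 2g-1+r$ by~\eqref{eq:dim.h1}, $\dim\bigl(\Dsum_{i=1}^r\C_p\bigr) = r$, and $\dim\C_p = 1$; the alternating sum $2g - (2g-1+r) + r - 1 = 0$, so once we know the sequence is a complex (all composites zero) and is exact at the two ends, exactness in the middle is automatic. Concretely: injectivity at spot one and surjectivity at spot four are as above; exactness at spot two gives $\dim\im(\Dsum\Res) = (2g-1+r) - 2g = r-1$; since $\Dsum\Res$ lands in $\ker(\sum)$, which has dimension $r-1$, we get $\im(\Dsum\Res) = \ker(\sum)$, i.e.\ exactness at spot three. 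The main obstacle I expect is the residue-theorem step, i.e.\ showing the composite $\Dsum\Res$ followed by $\sum$ is zero (equivalently, that the residues at the ends of a globally analytic form on $U$ sum to zero): this requires either a careful global argument on $X$ via the algebraic residue theorem transported through Coleman's isomorphism, or a direct potential-theoretic argument on $X^\an$ using the slope formula (Theorem~\ref{thm:slope.formula}) and the description of $-\log|f|$ on the annuli $A_i$ for an antiderivative $f$. Everything else is bookkeeping with Coleman's cited results and the dimension formula~\eqref{eq:dim.h1}.
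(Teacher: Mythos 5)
Your proposal is correct and follows essentially the same route as the paper: the paper simply cites Coleman's Proposition~4.4 for exactness at $H^1_\dR(U)$ and Proposition~4.3 for the residue theorem (image of $\Dsum\Res$ contained in $\ker(\sum)$), and then deduces exactness at $\Dsum_{i=1}^r\C_p$ from the dimension count~\eqref{eq:dim.h1}, exactly as in your final paragraph. The extra details you supply (injectivity of $H^1_\dR(X)^\alg\to H^1_\dR(U)$ via the comparison with $X\setminus S$ and the standard algebraic residue sequence) are a correct unpacking of what the citations to Coleman already provide.
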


\begin{proof}
  By~\cite[Proposition~4.4]{ColemanReciprocity}, the sequence is exact at
  $H^1_\dR(U)$, so we only need to justify exactness at $\Dsum_{i=1}^r\C_p$.
  Proposition~4.3 in~\cite{ColemanReciprocity} says that the image is contained in the
  kernel, so exactness follows from the dimension count~\eqref{eq:dim.h1}.
\end{proof}

%%%%%%%%%%%%%%%%%%%%%%%%%%%%%%%%%%%%%%%%%%%%%%%%%%%%%%%%
\section{Integration}
\label{sec:integration}
%%%%%%%%%%%%%%%%%%%%%%%%%%%%%%%%%%%%%%%%%%%%%%%%%%%%%%%%

We will use two integration theories on curves, namely, Berkovich--Coleman
integration and abelian integration.  The former is functorial with respect to
morphisms and can be calculated by formal antidifferentiation on open annuli.
The latter is suitable for use with Chabauty's method.  The purpose of this
section is to introduce the two integrals and compare them.  Related work comparing the two integrals in the context of parallel transport is being undertaken by Besser and Zerbes \cite{besserzerbes}.

In this section, we take $K = \C_p$, with the valuation normalized so that
$\val(p)=1$.  We introduce the following notation for a smooth, commutative
algebraic or analytic $\C_p$-group $G$ and a smooth $\C_p$-analytic space $X$:
\smallskip

\begin{tabular}{ll}
  $Z^1_\dR(X)$ & The space of closed $1$-forms on $X$.  \\
  $\Omega^1_\inv(G)$ & $\subset Z^1_\dR(G)$, the space of invariant $1$-forms on
  $G$. \\
  $\Lie(G)$ & The tangent space of $G$ at the identity, dual to
  $\Omega^1_\inv(G)$. \\
\end{tabular}

%----------------------------------------------------------------------------
\subsection{Integration theories}\label{s:twointegrals}
%----------------------------------------------------------------------------

Let $X$ be a smooth $\C_p$-analytic space, and let $\calP(X)$ be the set of paths
$\gamma\colon [0,1]\rightarrow X$ with ends in $X(\C_p)$.

\begin{definition} \label{d:it} An \defi{integration theory} on $X$ is a map $\int\colon \calP(X)\times Z^1_\dR(X)\rightarrow\C_p$ such that:
\begin{enumerate}
\item \label{i:ftc} If $U\subset X$ is an open subdomain isomorphic to an open
  polydisc and $\omega|_U=df$ with $f$ analytic on $U$, then
  $\int_\gamma \omega=f(\gamma(1))-f(\gamma(0))$ for all $\gamma\colon [0,1]\to U$.

\item $\int_\gamma \omega$ only depends on the fixed endpoint homotopy class of $\gamma$.

\item If $\gamma'\in\calP(X)$ and $\gamma'(0)=\gamma(1)$, then
\[\int_{\gamma'*\gamma} \omega=\int_\gamma \omega+\int_{\gamma'} \omega. \]

\item $\omega\mapsto\int_\gamma \omega$ is linear in $\omega$ for fixed
  $\gamma$.
\end{enumerate}
\end{definition}

Condition (\ref{i:ftc}) completely determines the integration theory on an open
polydisc $X$ by the Poincar\'e lemma: every closed $1$-form
$\omega\in Z^1_\dR(X)$ is exact.  To be explicit, let $X = \bB(1)_+$ be the
$1$-dimensional open unit disc.  Any $\omega\in Z^1_\dR(\bB(1)_+)$ can be
written as $\omega = g(T) dT$, where $g(T) = \sum_{n\geq 0} a_n T^n$ is a
convergent power series; then $\omega = df$, where
$f(T) = \sum_{n\geq 0} a_n T^{n+1} / (n+1)$ is the power series obtained by
formally antidifferentiating $g(T)$.  Note that on an \emph{open} disc, if
$g(T)$ is convergent, then $f(T)$ is also convergent.  Hence for
$\gamma\in\calP(X)$ we have $\int_\gamma g(T)dT = f(\gamma(1)) - f(\gamma(0))$.
In higher dimensions one proceeds as above, one variable at a time, as in the
proof of the classical Poincar\'e lemma.

In general, Definition~\ref{d:it} does not completely specify an integration
theory, because a smooth $\C_p$-analytic space, even a smooth proper curve,
cannot necessarily be covered (as a Berkovich space) by open polydiscs.  The
ambiguity is illustrated in the following fundamental example.

\begin{example} \label{eg:torus.integral}
  Let $\bG_m^\an$ be the analytification of the multiplicative group over $\C_p$
  with coordinate $T$.  This is a contractible topological space, so any
  integration theory on $\bG_m^\an$ is by definition path-independent; hence, it
  makes sense to write $\int_x^y\omega$ for $x,y\in\C_p^\times$ and
  $\omega\in Z^1_\dR(\bG_m^\an) = H^0(\bG_m^\an, \Omega^1_{\bG_m^\an/\C_p})$. 

  Let $\omega=dT/T$, an invariant $1$-form.  The formal
  antiderivative of $\omega$ on the space of ``1-units''
  $U_1 = \{x\in\bG_m^\an~:~|T(x)-1| < 1\}\cong\bB(1)_+$ is the logarithm given
  by the usual Mercator series
  \[ \log(T) = \sum_{n=1}^\infty(-1)^{n+1}\frac{(T-1)^n}n. \]
  If we require that $x\mapsto\int_1^x dT/T \colon \C_p^\times\to\C_p$ be a group
  homomorphism, then $\int_1^x dT/T$ is determined on
  $\sO_{\C_p}^\times = \{x\in\C_p^\times~:~|x|=1\}$ by the property that
  $\int_1^x dT/T = 0$ for $x$ a root of unity.  Set $\log(x) = \int_1^x dT/T$
  for $x\in\sO_{\C_p}^\times$.

  Beyond this, one has to make a choice to specify an integration theory on
  $\bG_m^\an$.  Let $t\colon \Q\to\C_p^\times$ be a section of $\val\colon \C_p^\times\to\Q$
  such that $t(1) = p$, and let $h\colon \Q\to\C_p$ be any additive group
  homomorphism.  Define $\int_1^x dT/T = \log(x_1) + h(r)$, where
  $x = x_1\cdot t(r)$ for $r\in\Q$ and $x_1\in\sO_{\C_p}^\times$.  It turns out that this
  extends to an integration theory on $\bG_m^\an$ for any choice of $h$.  (The
  definition does not depend on $t$, since if $t'$ is another section with
  $t(1)=p$, then $t(r)t'(r)^{-1}$ is a root of unity for all $r\in\Q$.)

\end{example}

%----------------------------------------------------------------------------
\subsection{Berkovich--Coleman integration}\label{sec:bc.integral}
%---------------------------------------------------------------------------- 
The choice of homomorphism $h\colon \Q\to\C_p$ in Example~\ref{eg:torus.integral} is
the only additional datum necessary for the construction of the
Berkovich--Coleman integration theory.  It is equivalent to the following datum.

\begin{definition} A \defi{branch of the logarithm} is a homomorphism
\[ \Log \colon
 \C_p^\times \To \C_p \]
that restricts to $\log$ on $\sO_{\C_p}^\times$.
\end{definition}

In the notation of Example~\ref{eg:torus.integral}, we have
\begin{equation}\label{eq:Log.and.h}
  \Log(x) = \log(x_1) + h(r), \sptxt{where} x = x_1\cdot t(r).
\end{equation}
After mandating that the integral be functorial under morphisms of analytic
spaces, the integration theory is uniquely specified by equivariance under a
lift of Frobenius, a principle attributed to Dwork.  This approach to
integration has been greatly extended by
Berkovich~\cite{Berkovich:integrationOfOne}; here, we present only a very small
subset of his theory.

\begin{definition} \label{def:bc.int}
  The \defi{Berkovich--Coleman integration theory} is an integration theory
  \[ \presuper\BC\int\colon \calP(X)\times Z^1_\dR(X)\To\C_p \]
  for every smooth $\C_p$-analytic space $X$, satisfying:
  \begin{enumerate}
  \item if $X = \G_m^{\an}$, then $\presuper\BC\int_1^x dT/T=\Log(x)$, and
  \item if $f\colon X\rightarrow Y$ is a morphism and $\omega\in Z^1_\dR(Y)$, then
    \[\presuper\BC\int_\gamma f^*\omega=\presuper\BC\int_{f(\gamma)} \omega.\]
  \end{enumerate}
  Moreover, condition~(\ref{i:ftc}) of Definition~\ref{d:it} holds for any
  open subdomain $U\subset X$.
\end{definition}

This integration theory was defined for curves of bad reduction by Coleman and de Shalit \cite{CdS}.  There, one covers a curve by basic wide open subsets and
annuli.  A primitive (i.e., an antiderivative) is produced on the basic wide
opens by means of Frobenius equivariance and constructed explicitly on annuli by
antidifferentiating a power series.  This integration theory is closely related to
that of Schneider on $p$-adically uniformized curves (see
\cite{deShalit:versus} for details on the comparison).

\begin{example}
  Choose $\varpi\in\C_p$ with $0<|\varpi|<1$, and let $X = \bS(\varpi)_+$, the
  open annulus $|\varpi| < |x| < 1$.  A closed $1$-form $\omega$ can be written
  \[ \omega = g(T)\frac{dT}T = \sum_{n=-\infty}^\infty a_n T^n\,\frac{dT}T \]
  for a convergent infinite-tailed Laurent series $g(T)$.  Let
  $f(T) = \sum_{n\neq 0} (a_n/n)T^n$.  Then $df = \omega - a_0(dT/T)$, so for
  $x,y\in\bS(\varpi)(\C_p)$, we have
  \[ \presuper\BC\int_x^y \omega = \big(f(y) + a_0\Log(y)\big) - \big(f(x) + a_0\Log(x)\big). \]
\end{example}

\begin{example}\label{ex:BC.homom}
  Let $G$ be a smooth, commutative, simply connected $\C_p$-analytic group, and
  let $\omega$ be a (closed) invariant differential on $G$.  Since $G$ is
  simply connected, a Berkovich--Coleman integral only depends on the endpoints
  of a path, so it makes sense to write $\presuper\BC\int_1^x\omega$ for
  $x\in G(\C_p)$.  For $x,y\in G(\C_p)$, we have
  \[ \presuper\BC\int_1^x\omega + \presuper\BC\int_1^y\omega =
  \presuper\BC\int_1^x\omega + \presuper\BC\int_x^{xy}\omega =
  \presuper\BC\int_1^{xy}\omega, \]
  where the first equality is by invariance of $\omega$ and the second is by
  Definition~\ref{d:it}(3). Therefore $x\mapsto\presuper\BC\int_1^x\omega$ is a
  group homomorphism $G(\C_p)\to\C_p$.
\end{example}

In what follows, we will pick once and for all a branch of logarithm.
A convenient choice is $\Log(p)=0$; that is, $h=0$.

%----------------------------------------------------------------------------
\subsection{The abelian integral}
\label{sec:abelian-integral}
%---------------------------------------------------------------------------- 
Another approach to defining a $p$-adic integration theory on
a curve is via $p$-adic Lie theory on its Jacobian.  This was done in great
generality by Zarhin~\cite{Zarhin:abelian-integrals}.  This method was
extended to the $p$-adic Tate module of abelian varieties by
Colmez~\cite{Colmez:periodes}.
Other references for this integration theory are
\cite{Breuil:integration}, \cite{Vologodsky:hodge} and the second part of
\cite{ColemanIovita}, taken with the understanding that the first part uses the
Berkovich--Coleman integration theory.

Recall that if $A$ is an abelian variety over $K$, then
\[ \Omega^1_{A/K}(A) = \Omega^1_\inv(A) = Z^1_\dR(A) \]
because all global $1$-forms on a proper group scheme are invariant, and any
invariant $1$-form on a smooth, commutative algebraic group is closed.

\begin{definition}\label{def:abelian.integral}
  Let $A$ be an abelian variety over $\C_p$.  The \defi{abelian logarithm} on $A$
  is the unique homomorphism of $\C_p$-Lie groups
  $\log_{A(\C_p)}\colon  A(\C_p)\to\Lie(A)$ such that
  \begin{equation}\label{eq:tangent.cond}
    d\log_{A(\C_p)} \colon
 \Lie(A) \To \Lie(\Lie(A)) = \Lie(A)
  \end{equation}
  is the identity map.
\end{definition}

See~\cite{Zarhin:abelian-integrals} for the existence and uniqueness of
$\log_{A(\C_p)}$.  For $x\in A(\C_p)$ and $\omega\in\Omega_{A/\C_p}^1(A)$, we define
\[ \presuper\Ab\int_0^x\omega = \angles{\log_{A(\C_p)}(x),\,\omega}, \]
where $\angles{\scdot,\scdot}$ is the pairing between $\Lie(A)$ and
$\Omega_{A/\C_p}^1(A)$.  For $x,y\in A(\C_p)$, we set
\[ \presuper\Ab\int_y^x\omega = \presuper\Ab\int_0^x\omega -
\presuper\Ab\int_0^y\omega. \]
We call $\presuper\Ab\int$ the \defi{abelian integral} on $A$.

The abelian logarithm and the abelian integral are functorial under homomorphisms
of abelian varieties: if $f\colon A\rightarrow B$ is a homomorphism, then
$df\circ \log_{A(\C_p)} = \log_{B(\C_p)}\circ f$ and
\[\presuper{\Ab}{\int}_\gamma f^*\omega=\presuper{\Ab}{\int}_{f(\gamma)}
\omega\]
for $\omega\in\Omega_{B/\C_p}^1(B)$.

\begin{proposition}\label{prop:abelian.int}
  The abelian integral
  $(\gamma,\omega)\mapsto\presuper\Ab\int_{\gamma(0)}^{\gamma(1)}\omega$ is an
  integration theory on $A^\an$ in the sense of Definition~\ref{d:it}.
\end{proposition}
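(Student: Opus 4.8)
The plan is to verify the four axioms of Definition~\ref{d:it} directly for the map $(\gamma,\omega)\mapsto\presuper\Ab\int_{\gamma(0)}^{\gamma(1)}\omega = \angles{\log_{A(\C_p)}(\gamma(1)) - \log_{A(\C_p)}(\gamma(0)),\,\omega}$, leaning on the known properties of the abelian logarithm (existence, uniqueness, and the normalization~\eqref{eq:tangent.cond}) and on the fact that $\Omega^1_{A/\C_p}(A) = Z^1_\dR(A)$, so that every closed $1$-form on $A^\an$ is the analytification of an invariant algebraic $1$-form. Axioms (2) and (3) are essentially immediate: since the value of the integral depends only on the endpoints $\gamma(0),\gamma(1)$ and not on the path at all, homotopy-invariance (2) is trivial, and additivity under concatenation (3) is just the telescoping identity $\big(L(z)-L(y)\big) + \big(L(y)-L(x)\big) = L(z)-L(x)$ where $L = \angles{\log_{A(\C_p)}(\scdot),\omega}$. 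Axiom (4), linearity in $\omega$ for fixed $\gamma$, is clear from bilinearity of the pairing $\angles{\scdot,\scdot}$ between $\Lie(A)$ and $\Omega^1_{A/\C_p}(A)$.

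The only axiom requiring real work is~(\ref{i:ftc}): if $U\subset A^\an$ is an open subdomain isomorphic to an open polydisc and $\omega|_U = df$ for some analytic $f$ on $U$, then $\presuper\Ab\int_\gamma\omega = f(\gamma(1)) - f(\gamma(0))$ for every path $\gamma$ in $U$. To prove this I would reduce to the local statement that, near any point, the abelian logarithm is itself obtained by formal antidifferentiation of invariant differentials. Concretely: the normalization condition~\eqref{eq:tangent.cond} says that $d\log_{A(\C_p)}$ is the identity on $\Lie(A)$, which is exactly the statement that, in any system of formal/analytic coordinates at the identity arising from a basis $\omega_1,\dots,\omega_n$ of $\Omega^1_\inv(A)$ with dual basis $\partial_1,\dots,\partial_n$ of $\Lie(A)$, the $i$-th coordinate function of $\log_{A(\C_p)}$ is the analytic primitive of $\omega_i$ vanishing at the origin on the residue polydisc of the identity. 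Pairing against $\omega = \sum c_i\omega_i$ and translating (using invariance of the $\omega_i$ and functoriality/homomorphism property of $\log_{A(\C_p)}$, i.e. Example~\ref{ex:BC.homom}-style translation), one gets that on any open residue polydisc $V\subset A^\an$ the function $x\mapsto\presuper\Ab\int_{x_0}^x\omega$ is a primitive of $\omega|_V$. Since two primitives of $\omega$ on a connected open polydisc $U$ differ by a constant (the only analytic functions with zero differential on an open polydisc are the constants), $f$ and $x\mapsto\presuper\Ab\int_{x_0}^x\omega$ agree up to an additive constant on $U$, whence $f(\gamma(1)) - f(\gamma(0)) = \presuper\Ab\int_{\gamma(0)}^{\gamma(1)}\omega$.

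The main obstacle is the compatibility between the \emph{formal} (scheme-theoretic/Lie-theoretic) construction of $\log_{A(\C_p)}$ and the \emph{analytic} notion of antiderivative on a Berkovich open polydisc appearing in Definition~\ref{d:it}(\ref{i:ftc}): one must know that the $\C_p$-Lie group homomorphism $\log_{A(\C_p)}$ restricts, on the open polydisc that is the residue domain of the identity, to the analytic map given by the convergent power series obtained by formally integrating the invariant differentials, and that these local primitives glue correctly under translation to yield a well-defined (path-independent, since it depends only on endpoints) function on all of $A^\an$. This is standard — it is the content of Zarhin's construction~\cite{Zarhin:abelian-integrals} — but it is the one point where citing the literature rather than re-deriving is essential. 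Once that is in hand, the remaining verifications are the routine telescoping and bilinearity checks sketched above, so I would state~(\ref{i:ftc}) carefully, cite~\cite{Zarhin:abelian-integrals} for the local description of $\log_{A(\C_p)}$, and dispatch axioms (2)–(4) in a single short paragraph.
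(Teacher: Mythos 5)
Your handling of axioms (2)--(4) and your identification of axiom~(\ref{i:ftc}) as the only substantive point both match the paper, which dismisses (2)--(4) in one sentence. The gap is in your argument for~(\ref{i:ftc}). What your Lie-theoretic computation actually establishes is that $g(x)=\presuper\Ab\int_{x_0}^{x}\omega$ is an analytic primitive of $\omega$ on each \emph{residue} polydisc (a translate of the formal fiber of the identity in a formal model of $A$). But the $U$ in Definition~\ref{d:it}(\ref{i:ftc}) is an arbitrary Berkovich open polydisc in $A^\an$, and it is not established that $U$ lies in a single residue polydisc. On each residue polydisc $V_\alpha\subset U$ you get $g=f+c_\alpha$, and the rigidity statement ``two primitives of $\omega$ on a connected polydisc differ by a constant'' cannot be used to equate the $c_\alpha$: it applies to functions analytic on all of $U$, whereas at this stage $g$ is only locally analytic on the totally disconnected set $U(\C_p)$, on which ``locally constant'' carries no information. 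Citing \cite{Zarhin:abelian-integrals} does not close this, because Zarhin's construction and uniqueness live entirely in the na\"ive topology on $A(\C_p)$ and say nothing about analyticity across residue polydiscs inside one Berkovich polydisc; Example~\ref{eg:torus.integral} is precisely a warning that being an analytic primitive on every residue disc underdetermines an integration theory.

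The paper supplies the missing global input by a different route: it lifts $U$ to $\td U$ in the universal cover $E^\an$, invokes Proposition~\ref{prop:compare.integrals} to express the difference between the abelian and Berkovich--Coleman integrals as $\angles{L\circ\trop(\scdot),\omega}$, and then proves (Lemma~\ref{lem:local.val.const}, a harmonicity/maximum-principle argument using the local splitting of the Raynaud extension) that $\trop$ is constant on $\td U$. Since the Berkovich--Coleman integral \emph{is} given by a single analytic antiderivative on all of $U$ by construction, this forces $g$ to differ from such an antiderivative by an honest constant on $U$, which is exactly axiom~(\ref{i:ftc}). To repair your argument you would either need to prove the unstated geometric fact that every open polydisc in $A^\an$ is contained in a single residue polydisc (plausible for curves via a genus argument, but not at all obvious in higher dimension, and itself requiring a potential-theoretic input of the same nature as Lemma~\ref{lem:local.val.const}), or else route through the Berkovich--Coleman comparison as the paper does.
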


We postpone the proof until after the comparison result,
which is Proposition~\ref{prop:compare.integrals}.

%----------------------------------------------------------------------------
\subsection{Comparison between the Berkovich--Coleman and abelian integrals}
\label{sec:comparison.thm}
%----------------------------------------------------------------------------

Before comparing the two integration theories, we consider the following
motivating example (see also~\cite[Ex.~7.4]{ColemanIovita}).

\begin{example}
  Let $E$ be an elliptic curve over $\C_p$ with bad reduction.  Then $E$ is a
  Tate curve; that is, it has a uniformization $E^\an\cong\bG_m^\an/q^\Z$ for a
  unique value $q\in\C_p$ with $0<|q|<1$.  As $\bG_m^\an$ is contractible, the
  projection $\pi\colon \bG_m^\an\to E^\an$ is a universal covering space (in
  the sense of point-set topology), with deck transformation group $q^\Z$.

  Let $\omega$ be the invariant $1$-form on $E$ which pulls back to $dT/T$ on
  $\bG_m^\an$.  Let $\gamma\colon [0,1]\to E^\an$ be a path with $\gamma(0) = 0$ and
  $\gamma(1) = x\in E(\C_p)$, let $\td\gamma\colon [0,1]\to\bG_m^\an$ be the unique lift
  of $\gamma$ with $\td\gamma(0) = 1$, and let $\td x = \td\gamma(1)$.  By
  the definition and the functoriality of the Berkovich--Coleman integral, we have
  \[ \presuper\BC\int_{\gamma}\omega = \presuper\BC\int_{\td\gamma}\frac{dT}T =
  \Log(\td x). \]
  On the other hand, the abelian integral gives rise to a (potentially)
  \emph{different} branch of the logarithm $\Log_{\Ab}\colon \C_p^\times\to\C_p$ by
  setting
  \[ \Log_{\Ab}(\td x) \coloneq \presuper\Ab\int_0^{\pi(\td x)}\omega =
  \presuper\Ab\int_\gamma\omega. \]
  This branch of the logarithm $\Log_{\Ab}$ comes from the homomorphism
  $h_\Ab\colon \Q\to\C_p$ defined by $\Q$-linearity and
  $h_\Ab(\val(q)) = \log(t(\val(q))/q)$.  As both $\Log$ and $\Log_\Ab$ restrict
  to $\log$ on $\sO_{\C_p}^\times$, their difference $\Log-\Log_\Ab$ descends to
  a homomorphism from $\C_p^\times/\sO_{\C_p}^\times = \Q$ to $\C_p$, and we
  have
  \[ \presuper\BC\int_\gamma\omega - \presuper\Ab\int_\gamma\omega = \Log(\td x)
  - \Log_\Ab(\td x) = (h - h_\Ab)(\val(\td x)). \]
  In particular, the difference between the integrals is a $\Q$-linear function
  of the valuation of $\td x$.  We will show that this fact, suitably
  interpreted, holds in general.
\end{example}

%----------------------------------------------------------------------------
\subsubsection{Non-Archimedean uniformization of abelian varieties}\label{sec:uniformization}
%---------------------------------------------------------------------------- 
To study the general situation we will make use of the non-Archimedean analytic
uniformization of abelian varieties, in Berkovich's language.  The canonical
references
are~\cite{bosch_lutkeboh84:stable_reduction_uniformiz_abelian_varieties_II} and ~\cite{bosch_lutkeboh91:degenerat_abelian_varieties} (see also~\cite[Section~4]{baker_rabinoff} for a summary).

Let $A$ be an abelian variety over $\C_p$, and let $\pi\colon  E^\an\to A^\an$ be the
topological universal cover of $A^\an$.  Then $E^\an$ has the unique structure
of a $\C_p$-analytic group (after choosing an identity element), and the kernel
$M'$ of $\pi$ is canonically isomorphic to $\pi_1(A^\an) = H_1(A^\an,\Z)$.
Moreover, $E^\an$ is the analytification of an algebraic $\C_p$-group $E$, which
is an extension of an abelian variety $B$ with good reduction by a torus $T$.
This uniformization theory is summarized in the
\defi{Raynaud uniformization cross}:
\begin{equation} \label{eq:raynaud.cross}
  \xymatrix @=.20in{
    & {T^\an} \ar[d] & \\
    {M'} \ar[r] & {E^\an} \ar[r]^\pi\ar[d] & {A^\an} \\
    & {B^\an} &
  }
\end{equation}
Let $M$ be the character lattice of $T$, so $T = \Spec(\C_p[M])$.
The abelian variety $A$ has a semiabelian $\sO_{\C_p}$-model $\sA$ whose
special fiber $\bar A$ fits into the short exact sequence
\[ 0 \To \bar T \To \bar A \To \bar B \To 0, \]
where $\bar T = \Spec(\bar\F_p[M])$ and $\bar B$ is the reduction of $B$.  Let
$\hat\sA$ be the $p$-adic completion of $\sA$, and let $A_0 = \hat\sA_\eta$ be
its analytic generic fiber.  This is an analytic domain subgroup in $A^\an$
which lifts in a unique way to an analytic domain subgroup in $E^\an$.  It fits
into the short exact sequence
\begin{equation} \label{eq:T0.A0.B}
  0 \To T_0 \To A_0 \To B^\an \To 0,
\end{equation}
where $T_0 = \sM(\C_p\angles{M})$ is the affinoid torus inside of $T^\an$.

Let $N = \Hom(M,\Z)$ be the dual of the character lattice of $T$, with
$(\scdot,\scdot)\colon  N\times M\to\Z$ the evaluation pairing.  For $u\in M$, we let
$\chi^u\in \C_p[M]$ denote the corresponding character of $T$.  We have a
\defi{tropicalization map} $\trop\colon  T^\an\to N_\R = \Hom(M,\R)$ defined by
$(\trop(\|\scdot\|),\,u) = -\log\|\chi^u\|$, where we regard $T^\an$ as a space
of seminorms on $\C_p[M]$.  The map $\trop$ is surjective, continuous, and proper,
and the affinoid torus $T_0$ is equal to $\trop^{-1}(0)$.  We extend $\trop$ to
all of $E^\an$ by declaring that $\trop^{-1}(0) = A_0$.  We have
$\trop(E(\C_p)) = N_\Q = \Hom(M,\Q)$, so the map $\trop\colon  E(\C_p)\to N_\Q$ is
a surjective group homomorphism with kernel $A_0(\C_p)$.  The restriction of
$\trop$ to $M'\subset E(\C_p)$ is injective, and its image
$\trop(M')\subset N_\Q$ is a full-rank lattice in the real vector space
$N_\R$.  Let $\Sigma = \Sigma(A)$ be the real torus $N_\R/\trop(M)$.  Since
$A^\an$ is the quotient of $E^\an$ by the action of $M'$, there exists a unique
map $\bar\tau\colon  A^\an\to\Sigma$ making the following exact diagram commute:
\begin{equation} \label{eq:trop.of.unif}
  \xymatrix @R=.2in{
    & & A_0 \ar[d] \ar @{=} [r] & A_0 \ar[d] & \\
    0 \ar[r] & {M'} \ar[r] \ar[d]^{\trop}_\cong
    & {E^\an} \ar[r] \ar[d]^{\trop}
    & {A^\an} \ar[r] \ar[d]^{\bar\tau} & 0 \\
    0 \ar[r] & {\trop(M')} \ar[r] 
    & {N_\R} \ar[r] & {\Sigma} \ar[r] & 0
  }
\end{equation}
The real torus $\Sigma$ is called the \defi{skeleton} of $A$; in fact there
exists a canonical section $\Sigma\to A^\an$ of $\bar\tau$, and $A^\an$
deformation retracts onto its image~\cite[Section~6.5]{berkovic90:analytic_geometry}.
Letting $\Sigma_\Q = N_\Q/\trop(M')$ and taking $\C_p$-points, we
have a surjective homomorphism of short exact sequences
\begin{equation} \label{eq:trop.of.unif.pts}
  \xymatrix @R=.2in{
    0 \ar[r] & {M'} \ar[r] \ar[d]^{\trop}_\cong
    & {E(\C_p)} \ar[r] \ar[d]^{\trop}
    & {A(\C_p)} \ar[r] \ar[d]^{\bar\tau} & 0 \\
    0 \ar[r] & {\trop(M')} \ar[r] 
    & {N_\Q} \ar[r] & {\Sigma_\Q} \ar[r] & 0
  }
\end{equation}
where $A_0(\C_p)$ is the kernel of the middle and right vertical arrows.

%----------------------------------------------------------------------------
\subsubsection{Comparison of the integrals}
%----------------------------------------------------------------------------
Since $E^\an$ is locally isomorphic to $A^\an$, or since any invariant $1$-form
on $E^\an$ descends to an invariant $1$-form on $A^\an$, we have canonical
identifications 
\[ \Lie(E) = \Lie(A) \sptxt{and} \Omega^1_\inv(E) = \Omega_{A/\C_p}^1(A). \]
Since $E^\an$ is simply connected, by Example~\ref{ex:BC.homom} we may
define a homomorphism
\[ \log_\BC \colon
 E(\C_p)\To\Lie(A), \qquad x\mapsto\presuper\BC\int_0^x. \]
Composing the abelian logarithm $\log_{A(\C_p)}\colon A(\C_p)\to\Lie(A)$ with
$\pi\colon E^\an\to A^\an$ yields
\[ \log_\Ab \colon
 E(\C_p)\To\Lie(A), \qquad x\mapsto\presuper\Ab\int_0^{\pi(x)}. \]
\begin{proposition}\label{prop:compare.integrals}
  The difference 
  \[ \log_{\BC}-\log_{\Ab} \colon
 E(\C_p)\To \Lie(A) \]
  between the two logarithms
  factors as
  \[ E(\C_p) \xrightarrow\trop N_\Q \overset{L}\To\Lie(A), \]
  where $L$ is $\Q$-linear.
\end{proposition}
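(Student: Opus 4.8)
The plan is to compare the two logarithms one piece at a time along the Raynaud uniformization cross \eqref{eq:raynaud.cross}, using functoriality of both integration theories under morphisms of analytic (resp.\ algebraic) groups. First I would observe that both $\log_\BC$ and $\log_\Ab$ are group homomorphisms $E(\C_p)\to\Lie(A)$: for $\log_\BC$ this is Example~\ref{ex:BC.homom}, since $E^\an$ is simply connected; for $\log_\Ab$ it is the composition of the homomorphism $\log_{A(\C_p)}$ with the homomorphism $\pi$. Hence their difference $\delta \coloneq \log_\BC - \log_\Ab$ is a homomorphism, and it suffices to show $\delta$ kills $A_0(\C_p)$ (so that it factors through $\trop\colon E(\C_p)\to N_\Q$ by \eqref{eq:trop.of.unif.pts}) and then to check $\Q$-linearity of the induced map $N_\Q\to\Lie(A)$.

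For the vanishing on $A_0(\C_p)$, the key point is that $A_0 = \hat\sA_\eta$ is the generic fiber of the $p$-adic completion of a semiabelian model, hence is an analytic subdomain of $A^\an$ on which everything is ``as in the good reduction case.'' On such a domain the Berkovich--Coleman and abelian integrals agree: both are characterized by the Frobenius-equivariance/Dwork principle on the formal group, or more concretely, one uses the short exact sequence \eqref{eq:T0.A0.B}, namely $0\to T_0\to A_0\to B^\an\to 0$. On the affinoid torus $T_0 = \sM(\C_p\angles{M})$ every invariant $1$-form is a $\C_p$-combination of $d\chi^u/\chi^u$, and since $T_0 = \trop^{-1}(0)$ consists of points of norm $1$ in each coordinate, the branch-of-logarithm ambiguity disappears: $\presuper\BC\int = \presuper\Ab\int$ there because $\Log$ restricts to the canonical $\log$ on $\sO_{\C_p}^\times$ (our fixed choice). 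For the good-reduction abelian variety $B$, both integration theories agree by the uniqueness characterization of the abelian logarithm (Definition~\ref{def:abelian.integral}) together with the fact that $\log_\BC$ also has derivative the identity on $\Lie(B)$ and is a homomorphism; alternatively one cites that Berkovich--Coleman integration on a good-reduction abelian variety coincides with abelian integration. Piecing these together along \eqref{eq:T0.A0.B} via additivity (Definition~\ref{d:it}(3)) and linearity gives $\delta|_{A_0(\C_p)} = 0$.

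It remains to identify the induced homomorphism $L\colon N_\Q\to\Lie(A)$ and to see it is $\Q$-linear, i.e.\ that it extends to an $\R$-linear (equivalently here, $\Q$-linear) map. Since $\delta$ is a group homomorphism factoring through $\trop$, the induced map $L\colon N_\Q\to\Lie(A)$ is automatically $\Z$-linear, and $N_\Q = N\otimes_\Z\Q$ is a $\Q$-vector space, so $\Z$-linearity is the same as $\Q$-linearity; thus there is essentially nothing more to prove once the factorization is established. Concretely, $L$ is determined by its values on the generators $n\in N$, and these are computed exactly as in the Tate-curve example preceding \eqref{eq:raynaud.cross}: for $x\in E(\C_p)$ with $\trop(x) = n$, write $x = x_0\cdot s(n)$ for a chosen section $s$ of $\trop$, so that $\delta(x) = \delta(s(n))$ depends only on $n$; the difference $\presuper\BC\int - \presuper\Ab\int$ on $s(n)$ is the familiar $\Log(t(r)/q\text{-type})$ correction term, which is $\Z$-linear in $n$ by construction of the abelian branch $\Log_\Ab$.

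The main obstacle is the comparison on the good-reduction part $B$: asserting that the Berkovich--Coleman and abelian integrals coincide on a good-reduction abelian variety requires knowing that the Frobenius-equivariant primitive produced by Berkovich's theory on $B^\an$ agrees with the $p$-adic Lie-theoretic logarithm on $B(\C_p)$. One clean way to handle this is to note that on the formal group $\hat\sB$ (and its translates), the Berkovich--Coleman primitive of an invariant form is, by Dwork's principle and uniqueness, the formal-group logarithm, whose derivative at the origin is the identity; but $\log_{B(\C_p)}$ is the \emph{unique} $\C_p$-Lie group homomorphism $B(\C_p)\to\Lie(B)$ with that property, so the two must agree. Carrying out this identification carefully — and checking that the affinoid-torus comparison glues compatibly with it along \eqref{eq:T0.A0.B} — is where the real work lies; everything else is formal manipulation with the axioms of Definition~\ref{d:it} and the exact diagram \eqref{eq:trop.of.unif.pts}.
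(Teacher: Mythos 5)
Your reduction is the right one, and it matches the paper's: both logarithms are homomorphisms, so the difference $\delta=\log_\BC-\log_\Ab$ is a homomorphism, and since $A_0(\C_p)=\ker(\trop\colon E(\C_p)\to N_\Q)$ it suffices to show $\delta$ vanishes on $A_0(\C_p)$; the resulting map $L\colon N_\Q\to\Lie(A)$ is $\Z$-linear, hence $\Q$-linear because $\Lie(A)$ is uniquely divisible. The divergence is in how you prove $\delta|_{A_0(\C_p)}=0$. Your plan to split the problem along $0\to T_0\to A_0\to B^\an\to 0$ and treat $T_0$ and $B$ separately has a gap that you yourself flag but do not fill: this extension splits only locally on $B^\an$, the abelian integral is defined on $A$ rather than on $B$, and ``piecing together via Definition~\ref{d:it}(3)'' does not apply --- that axiom concerns concatenation of paths, not group extensions. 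To make your route work you would have to show that $\delta$, having killed $T_0(\C_p)$, descends to a homomorphism on $B(\C_p)$ and then identify that descended map with a difference of integrals of forms pulled back from $B$, which is exactly the ``real work'' you defer.

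The decomposition is in fact unnecessary, and the argument you give for $B$ is the one the paper applies \emph{directly to $A_0$}. Three observations make this work: (a) Zarhin's uniqueness of a homomorphism $G\to\Lie(G)$ inducing the identity on tangent spaces holds not just for abelian varieties but for any $\C_p$-Lie group $G$ with $G/U$ torsion for every open subgroup $U$; this holds for $A(\C_p)$ and hence for the open subgroup $A_0(\C_p)$, so $\log_\Ab|_{A_0(\C_p)}$ is characterized by its derivative at $0$. (b) $A_0$ is simply connected (the retraction of $A^\an$ onto $\Sigma$ collapses $A_0$ to a point), so by Example~\ref{ex:BC.homom} the Berkovich--Coleman integral is path-independent on $A_0$ and $\log_\BC|_{A_0(\C_p)}$ is a homomorphism. (c) Its derivative at $0$ is the identity, computed by formal antidifferentiation on a polydisc neighborhood of $0$ as in Section~\ref{s:twointegrals}. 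Uniqueness then gives $\log_\BC=\log_\Ab$ on $A_0(\C_p)$ in one step, with no gluing. (Your observation that the branch ambiguity of $\Log$ is invisible on points of norm $1$ is correct but is subsumed by this argument.)
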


We have the following interpretation of Proposition~\ref{prop:compare.integrals}
in terms of paths.  Let $\gamma\colon [0,1]\to A^\an$ be a path with $\gamma(0) = 0$
and $\gamma(1) = x\in A(\C_p)$.  Let $\td\gamma\colon [0,1]\to E^\an$ be the unique
lift starting at $0$, and let $\td x = \td\gamma(1)\in E(\C_p)$.  Then for
$\omega\in\Omega_{A/\C_p}^1(A)$, we have
\begin{equation}\label{eq:integral.difference}
  \presuper\BC\int_\gamma\omega - \presuper\Ab\int_\gamma\omega =
  \angles{L\circ\trop(\td x),\,\omega}, 
\end{equation}
where $\angles{\scdot, \scdot}$ denotes the pairing between $\Lie(A)$ and
$\Omega_{A/\C_p}^1(A)$.
Because $\log_{\Ab}$ vanishes on $\trop(M')\subset E(\C_p)$, the homomorphism
$L$ is uniquely determined by
\[ \angles{L\circ\trop(\td x),\, \omega} = \presuper{\BC}{\int}_\gamma \omega
\sptxt{for} \gamma\in\pi_1(A^\an,0). \]

\begin{proof}[Proof of Proposition~\ref{prop:compare.integrals}]
  Since $A_0(\C_p) = \ker(\trop\colon E(\C_p)\surjects N_\Q)$, the proof amounts to
  showing that $\log_\BC = \log_\Ab$ on $A_0(\C_p)$.  According
  to~\cite{Zarhin:abelian-integrals}, the abelian integral on an abelian
  $\C_p$-Lie group $G$ exists and is characterized by~\eqref{eq:tangent.cond}
  whenever $G$ has the property that $G/U$ is a torsion group for all open
  subgroups $U$ of $G$.  This property is satisfied by $A(\C_p)$.  Since
  $A_0(\C_p)$ is an analytic domain in $A(\C_p)$, it is an open subgroup of
  $A(\C_p)$ in the na\"ive analytic topology, so the property is also satisfied
  by $A_0(\C_p)$.  Hence $\log_\Ab|_{A_0(\C_p)}$ is characterized by the fact
  that it induces the identity map on tangent spaces.

  On the other hand, $A_0$ is simply connected---the deformation retraction of
  $A^\an$ onto $\Sigma$ takes $A_0$ onto $\{0\}$---so the Berkovich--Coleman
  integral on $A_0(\C_p)$ is path-independent.  Hence it suffices to show that
  $\log_\BC$ induces the identity map on $\Lie(A_0) = \Lie(A)$. But $0\in A$ has
  a neighborhood $U$ isomorphic to an open unit polydisc, so $\log_\BC$ can be
  calculated on $U$ by formal antidifferentiation as in Section~\ref{s:twointegrals}.
\end{proof}

Because $N=0$ for abelian varieties of good reduction, we have the following.

\begin{corollary}\label{cor:same.integrals}
  The Berkovich--Coleman and abelian integrals coincide on abelian varieties of
  good reduction.
\end{corollary}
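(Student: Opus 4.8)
The plan is to read this off Proposition~\ref{prop:compare.integrals} once the non-Archimedean uniformization of Section~\ref{sec:uniformization} is made explicit in the good reduction case. First I would observe that if $A$ has good reduction then its (necessarily unique) semiabelian $\sO_{\C_p}$-model $\sA$ is an abelian scheme, so the torus $\bar T$ appearing in the reduction sequence is trivial; hence the character lattice $M$, its dual $N$, and the period lattice $M'$ all vanish, the torus $T$ in the Raynaud uniformization cross~\eqref{eq:raynaud.cross} is trivial, and $\pi\colon E^\an\to A^\an$ is the identity with $E = A$. Equivalently, the skeleton $\Sigma = N_\R/\trop(M')$ of $A$ is a single point, so $A^\an = A_0 = \hat\sA_\eta$ deformation retracts onto a point and is in particular simply connected.

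Next I would use simple connectedness to reduce to the case handled by~\eqref{eq:integral.difference}. The abelian integral $\Abint_\gamma\omega$ depends on $\gamma$ only through its endpoints by construction, and the Berkovich--Coleman integral $\BCint_\gamma\omega$ depends on $\gamma$ only through its fixed endpoint homotopy class by Definition~\ref{d:it}(2), hence also only through its endpoints since $A^\an$ is simply connected. So by Definition~\ref{d:it}(3) both integrals of a form over a path from $a$ to $b$ equal the integral from $0$ to $b$ minus the integral from $0$ to $a$, and it suffices to compare $\BCint_\gamma\omega$ and $\Abint_\gamma\omega$ for $\gamma$ a path from $0$ to a point $x\in A(\C_p)$ and $\omega$ a global $1$-form on $A$ (equivalently, any closed $1$-form). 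For such $\gamma$ the lift $\td\gamma$ to $E^\an$ starting at $0$ is $\gamma$ itself since $\pi$ is the identity, so $\td x = x$, and~\eqref{eq:integral.difference} reads
\[
\BCint_\gamma\omega - \Abint_\gamma\omega = \angles{L\circ\trop(x),\,\omega}.
\]
Since $\trop$ maps into $N_\Q = 0$, the right-hand side vanishes, which gives the corollary; equivalently $\log_\BC = \log_\Ab$ on $E(\C_p) = A(\C_p)$.

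I do not anticipate a real obstacle here: the content is exactly the specialization $N = 0$ of Proposition~\ref{prop:compare.integrals}. The only step deserving a sentence of care is the claim that good reduction forces the uniformization to be trivial, that is, that $\sA$ is then abelian over $\sO_{\C_p}$, so that the sequence $0\to T_0\to A_0\to B^\an\to 0$ of~\eqref{eq:T0.A0.B} has trivial torus part $T_0$ and $A_0 = A^\an$; everything else is bookkeeping with the axioms of an integration theory.
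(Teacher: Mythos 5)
Your proposal is correct and matches the paper's argument, which is precisely the one-line observation preceding the corollary: ``Because $N=0$ for abelian varieties of good reduction,'' the difference $\log_\BC-\log_\Ab$ in Proposition~\ref{prop:compare.integrals} factors through the zero vector space and hence vanishes. Your additional justification that good reduction trivializes the torus part of the Raynaud uniformization (so $E=A$, $M'=0$, and $A^\an$ is simply connected) is the implicit content of that remark, spelled out.
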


\begin{remark}
  Given a branch of the logarithm, Zarhin~\cite{Zarhin:abelian-integrals}
  defines an abelian integration theory for any commutative $\C_p$-algebraic
  group $G$.  The proof of Proposition~\ref{prop:compare.integrals} shows that
  the Berkovich--Coleman integral coincides with Zarhin's integral on any $G$
  such that $G^\an$ is simply connected and admits a neighborhood of $1$ which
  is isomorphic to a unit polydisc.
\end{remark}

\begin{proof}[Proof of Proposition~\ref{prop:abelian.int}]
  The only part of Definition~\ref{d:it} that does not follow immediately from
  the definitions is condition~(\ref{i:ftc}), the fundamental theorem of
  calculus on open polydiscs.  Let $U\subset A^\an$ be an open subdomain
  isomorphic to an open polydisc.  As $U$ is simply connected, it lifts to an
  open subdomain $\td U\subset E^\an$ which maps isomorphically onto $U$.  By
  Proposition~\ref{prop:compare.integrals}, it suffices to show that
  $\trop(\td U)$ is a single point.

  Choosing a basis for $N$, we can think of $\trop$ as a map $E^\an\to\R^n$.  As
  explained in the paragraph after the statement in~\cite[Theorem~1.2]{bosch_lutkeboh91:degenerat_abelian_varieties}, the extensions
  $0\to T_0\to A_0\to B^\an\to 0$ and $0\to T\to E^\an\to B^\an\to 0$ split
  locally on $B^\an$ in a compatible way.  It follows that the coordinates of
  $\trop$ locally (on $B^\an$) have the form $-\log|f|$ for $f$ an invertible
  function.  Therefore the claim is a consequence of
  Lemma~\ref{lem:local.val.const} below.
\end{proof}

\begin{lemma}\label{lem:local.val.const}
  Let $F\colon \bB^n(1)_+\to\R$ be a continuous function which locally has the form
  $F(x) = -\log|f(x)|$ for an invertible function $f$.  Then $F$ is constant.
\end{lemma}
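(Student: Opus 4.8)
The plan is to reduce to the one-dimensional case and then show that such an $F$ is harmonic, so that a maximum principle applies.

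\textbf{Reduction to $n=1$.} Since type-$1$ points are dense in $\bB^n(1)_+$ and $F$ is continuous, it suffices to prove $F$ is constant on $\bB^n(1)_+(\C_p)$. If not, choose $\C_p$-points $a\neq b$ with $F(a)\neq F(b)$. The set of parameters $t$ for which $a+t(b-a)$ lies in $\bB^n(1)_+$ is a finite intersection of open discs in the affine line, each containing the points $t=0$ and $t=1$; by the ultrametric nesting of discs this intersection is itself an open disc, which after rescaling we identify with $\bB^1(1)_+$. Pulling $F$ back along the resulting closed immersion $\bB^1(1)_+\injects\bB^n(1)_+$ gives a continuous function on $\bB^1(1)_+$ that still locally has the form $-\log|f|$ with $f$ invertible (a pullback of a unit is a unit) and that is non-constant since it separates the images of $a$ and $b$. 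So we may assume $n=1$.

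\textbf{The one-dimensional case.} Since $F$ is locally $-\log|f|$ with $f$ having empty divisor, the slope formula (Theorem~\ref{thm:slope.formula}) shows $F$ is piecewise affine with integer slopes and that at every point the incoming slopes sum to zero; i.e.\ $F$ is harmonic on $\bB^1(1)_+$. Fix $0<\rho<1$ and let $\eta_\rho\in\bB^1(1)_+$ be the Gauss point of the closed subdisc $\bar\bB^1(\rho)$. I claim $F$ is constant equal to $F(\eta_\rho)$ on $\bar\bB^1(\rho)$. Being continuous on the compact set $\bar\bB^1(\rho)$, $F$ attains a maximum $M$ there. Suppose it is attained at some $x^\ast\neq\eta_\rho$. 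Then $x^\ast$ lies in an open residue disc of $\bar\bB^1(\rho)$, so we may choose a basic wide open $W$ with central point $x^\ast$, contained in $\bar\bB^1(\rho)$ and small enough that $F|_W=-\log|f|$ for a single invertible $f$. On the underlying affinoid $Y=\tau^{-1}(x^\ast)$, whose Shilov boundary is the single point $x^\ast$, both $|f|$ and $|1/f|$ attain their suprema at $x^\ast$, so $F\equiv M$ on $Y$. On each annulus $A_i$ of $W$ with end $\zeta_i$, invertibility of $f$ forces $F$ to be affine along the skeleton of $A_i$ with an integer slope $m_i$ measured moving away from $x^\ast$; since $F\le M=F(x^\ast)$ on $W$ we get $m_i\le 0$, while Proposition~\ref{prop:zeros.on.wide.open} applied to $f$ gives $\sum_i m_i=0$ (as $d_{v_i}F(\zeta_i)=-m_i$ and $\div(f|_W)=0$). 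Hence every $m_i=0$, and then $|f|$, having dominant monomial $T^0$ on each $A_i$, is identically $e^{-M}$ on $W$, i.e.\ $F\equiv M$ on $W$. Thus $\{F=M\}$ is open away from $\eta_\rho$, and being also closed it is a union of open residue discs of $\bar\bB^1(\rho)$; taking closures forces $F(\eta_\rho)=M$ as well. So in every case $\max_{\bar\bB^1(\rho)}F=F(\eta_\rho)$, and applying the same argument to $-F=-\log|1/f|$ gives $\min_{\bar\bB^1(\rho)}F=F(\eta_\rho)$. Hence $F\equiv F(\eta_\rho)$ on $\bar\bB^1(\rho)$; since these constants agree on nested discs and $\bB^1(1)_+=\bigcup_{\rho<1}\bar\bB^1(\rho)$, the function $F$ is constant.

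\textbf{Main obstacle.} The real difficulty is that $F$ is only \emph{locally} of the form $-\log|f|$: there is no global $f$, so one cannot directly invoke the fact that the absolute value of a unit is constant on a polydisc. The mass formula (Proposition~\ref{prop:zeros.on.wide.open}) is precisely what allows local information at an interior maximum to propagate to a whole residue disc and thence to the Gauss point. One minor point to handle with care is that basic wide opens have type-$2$ central points, so a maximum occurring at a type-$3$ or type-$4$ point should be treated by placing $x^\ast$ in the interior of some wide open (or, more slickly, by invoking the maximum principle for subharmonic functions on Berkovich curves, which handles all point types at once).
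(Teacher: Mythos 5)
Your proof is correct, and its overall architecture is the same as the paper's: reduce to $n=1$ by slicing along lines through pairs of $\C_p$-points (using density of $\C_p$-points and continuity), then exploit harmonicity of $F$ in dimension one. The difference is in how the one-dimensional case is closed out. The paper simply observes that $F$ is harmonic in the sense of \cite[Definition~5.14]{baker_payne_rabinoff13:analytic_curves} and cites the resulting mean value/maximum principle: $F$ and $-F$ both attain their maxima at the Shilov point of each closed subdisc, so $F$ is constant. You instead reprove that maximum principle from scratch, propagating an interior maximum through a basic wide open via the Shilov-boundary property of the underlying affinoid and the mass formula (Proposition~\ref{prop:zeros.on.wide.open}), then using connectedness of residue discs to push the value to the Gauss point. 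This buys self-containedness at the cost of the loose end you yourself flag: a maximum attained at a point of type $1$, $3$, or $4$ cannot be the central point of a basic wide open. That is not a real gap --- type-$1$ and type-$4$ points have neighborhood bases of open discs, on which an invertible function has constant absolute value (so $F$ is locally constant there), and type-$3$ points have neighborhood bases of open annuli, where the same dominant-monomial argument forces slope zero --- but spelling this out is essentially reconstructing the general maximum principle that the paper invokes in one line. One further small point: to apply Proposition~\ref{prop:zeros.on.wide.open} you need $f$ to extend past the ends of $W$, which you get by shrinking $W$ slightly inside the open set on which $F=-\log|f|$; alternatively you can avoid the mass formula entirely by summing outgoing slopes at $x^\ast$ using Theorem~\ref{thm:slope.formula} with $\div(f)=0$.
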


\begin{proof}
  As $\bB^n(1)_+$ is covered by closed polydiscs of smaller radius, it suffices
  to prove the lemma for the closed polydisc $\bB^n(1)$ instead.  First we
  prove the lemma when $n=1$.  Since $F$ is locally of the form $-\log|f|$, it
  is \defi{harmonic} in the sense
  of~\cite[Definition~5.14]{baker_payne_rabinoff13:analytic_curves}:
  this follows from the slope formula \cite[Theorem~5.15]{baker_payne_rabinoff13:analytic_curves} and
  the fact that harmonicity is a local condition.  Therefore the mean value
  theorem applies, so $F$ attains its maximum on the Shilov boundary point
  $\zeta$ of $\bB(1)$.  By the same reasoning as applied to $-F$, we see that
  $F$ also attains its minimum on $\zeta$.  Thus $F$ is constant.

  The general case follows from the above and these observations: (a) any two
  $\C_p$-points of $\bB^n(1)$ are in the image of a morphism
  $\bB(1)\to\bB^n(1)$, and (b) the $\C_p$-points of $\bB^n(1)$ are dense in
  $\bB^n(1)$ by~\cite[Proposition~2.1.15]{berkovic90:analytic_geometry}.
\end{proof}

We extract the following statement from the proof of
Proposition~\ref{prop:abelian.int}.

\begin{proposition}\label{prop:ball.retracts.to.point}
  Let $\phi\colon \bB^n(1)_+\to E^\an$ be a morphism.  Then
  $\trop\circ\,\phi\colon \bB^n(1)_+\to N_\R$ is constant.
\end{proposition}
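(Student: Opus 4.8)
The plan is to reduce Proposition~\ref{prop:ball.retracts.to.point} to Lemma~\ref{lem:local.val.const}, which was already established just above. Concretely, a morphism $\phi\colon\bB^n(1)_+\to E^\an$ lands (after possibly shrinking to a closed subpolydisc, by density of $\C_p$-points and connectedness) inside a region where the coordinates of $\trop$ pull back to functions of the form $-\log|f|$ for invertible $f$; then each coordinate of $\trop\circ\phi$ is constant by the lemma, so $\trop\circ\phi$ is constant.

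First I would recall, as in the proof of Proposition~\ref{prop:abelian.int}, that choosing a basis of $N\cong\Z^n$ presents $\trop$ as a map $E^\an\to\R^n$, and that by~\cite[Theorem~1.2]{bosch_lutkeboh91:degenerat_abelian_varieties} the torus extension $0\to T\to E^\an\to B^\an\to 0$ splits locally on $B^\an$; consequently, over a sufficiently small open subdomain of $E^\an$ the components of $\trop$ have the form $x\mapsto -\log|f(x)|$ for invertible analytic functions $f$. Next I would pull back this local description along $\phi$: the inverse images under $\phi$ of these ``trivializing'' opens form an open cover of $\bB^n(1)_+$, and on each piece each coordinate of $\trop\circ\phi$ is locally $-\log|f\circ\phi|$ with $f\circ\phi$ invertible (a pullback of an invertible function is invertible). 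Thus each coordinate of $\trop\circ\phi\colon\bB^n(1)_+\to\R$ is a continuous function which is locally of the form $-\log|g|$ for $g$ invertible, so Lemma~\ref{lem:local.val.const} applies directly and yields that each coordinate, hence $\trop\circ\phi$ itself, is constant.

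The one point requiring a little care — and the main (minor) obstacle — is verifying that the local splitting of the uniformization cross really does give the coordinates of $\trop$ the shape $-\log|f|$ with $f$ \emph{invertible}, rather than merely meromorphic: this is precisely the content of the remark in~\cite{bosch_lutkeboh91:degenerat_abelian_varieties} that $T^\an\to E^\an\to B^\an$ splits compatibly with $T_0\to A_0\to B^\an$, so that on a small enough open $W\subset B^\an$ one has $E^\an\times_{B^\an}W\cong T^\an\times W$ and the tropicalization is computed through the torus factor via $(\trop,u)=-\log|\chi^u|$ with $\chi^u$ a unit. Granting that, the rest is immediate. In fact, since this is exactly the argument already carried out inside the proof of Proposition~\ref{prop:abelian.int}, the cleanest write-up simply says: ``This was established in the course of the proof of Proposition~\ref{prop:abelian.int}, applied to $\td U = \phi(\bB^n(1)_+)$ in place of $\td U$ there,'' and leaves it at that.

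\begin{proof}
  This was shown in the course of the proof of Proposition~\ref{prop:abelian.int}.  In detail: choosing a basis for $N$, we regard $\trop$ as a map $E^\an\to\R^n$.  As explained after the statement of~\cite[Theorem~1.2]{bosch_lutkeboh91:degenerat_abelian_varieties}, the extension $0\to T\to E^\an\to B^\an\to 0$ splits locally on $B^\an$, so that over a sufficiently small open subdomain $W\subset E^\an$ each coordinate of $\trop$ has the form $x\mapsto -\log|f(x)|$ for an invertible analytic function $f$ on $W$.  Pulling back along $\phi$, the opens $\phi^{-1}(W)$ cover $\bB^n(1)_+$, and on each such open every coordinate of $\trop\circ\phi$ is locally of the form $-\log|f\circ\phi|$ with $f\circ\phi$ invertible.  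Hence each coordinate of $\trop\circ\phi\colon\bB^n(1)_+\to\R$ is a continuous function which is locally $-\log$ of the absolute value of an invertible function, so it is constant by Lemma~\ref{lem:local.val.const}.  Therefore $\trop\circ\phi$ is constant.
\end{proof}
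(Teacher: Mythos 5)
Your proof is correct and takes essentially the same route as the paper: the paper states Proposition~\ref{prop:ball.retracts.to.point} as being ``extracted from the proof of Proposition~\ref{prop:abelian.int},'' and the argument you spell out (the local splitting of the Raynaud extension over $B^\an$ making the coordinates of $\trop$ locally of the form $-\log|f|$ with $f$ invertible, pulled back along $\phi$ and fed into Lemma~\ref{lem:local.val.const}) is exactly that argument. The only addition you make is the observation that $\phi$ need not be an isomorphism onto its image, which is harmless since invertibility is preserved under pullback.
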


%----------------------------------------------------------------------------
\subsection{Integration on curves of any reduction type}
%---------------------------------------------------------------------------- 
Fix a smooth, proper, connected $\C_p$-curve $X$ of genus at least $1$, let $J$
be its Jacobian, and let $\iota\colon X\injects J$ be the Abel--Jacobi map
defined with respect to a choice of basepoint $x_0\in X(\C_p)$.  Note that
$\iota^*\colon \Omega^1_{J/\C_p}(J)\to\Omega^1_{X/\C_p}(X)$ is an isomorphism
which does not depend on the choice of $x_0$.

As $X^\an$ is a smooth analytic space, it has a Berkovich--Coleman integration
theory $\presuper\BC\int$ as explained in Section~\ref{sec:bc.integral}.

\begin{definition}
  The \defi{abelian integral} on $X^\an$ is the map
  $\presuper\Ab\int\colon \calP\times\Omega_{X/\C_p}^1(X)\to\C_p$ defined by
  \[ \presuper\Ab\int_\gamma\iota^*\omega =
  \presuper\Ab\int_{\iota\circ\gamma}\omega. \]
\end{definition}

See~\cite{Zarhin:abelian-integrals} for a much more general construction along
these lines.

\begin{lemma}\label{lem:abelian.int.curves}
  The abelian integral is an integration theory on $X^\an$ in the sense
  of Definition~\ref{d:it}, which is independent of the choice of basepoint $x_0$.
\end{lemma}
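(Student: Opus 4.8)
The plan is to deduce Lemma~\ref{lem:abelian.int.curves} from Proposition~\ref{prop:abelian.int}, which asserts that $\presuper\Ab\int$ is an integration theory on $A^\an$ for an abelian variety $A$, applied to $A = J$. The only subtle point is that the abelian integral on $X^\an$ was \emph{defined} by pulling back via $\iota$, so almost every axiom of Definition~\ref{d:it} transfers formally from $J$ to $X$; the one axiom requiring care is the fundamental theorem of calculus on polydiscs, condition~(\ref{i:ftc}), and the independence of basepoint must be checked separately.

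First I would verify axioms (2), (3), and (4) of Definition~\ref{d:it}. For a path $\gamma\in\calP(X^\an)$ and $\iota^*\omega\in\Omega^1_{X/\C_p}(X)$ we have by definition $\presuper\Ab\int_\gamma\iota^*\omega = \presuper\Ab\int_{\iota\circ\gamma}\omega$. Since $\iota^*\colon\Omega^1_{J/\C_p}(J)\to\Omega^1_{X/\C_p}(X)$ is an isomorphism, every closed $1$-form on $X$ is of the form $\iota^*\omega$ for a unique $\omega$, so this does define a map $\calP(X^\an)\times Z^1_\dR(X^\an)\to\C_p$ (recall $Z^1_\dR(X^\an)=\Omega^1_{X/\C_p}(X)$ since $X$ is a curve, all holomorphic $1$-forms being closed). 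Homotopy invariance (2): if $\gamma\simeq\gamma'$ rel endpoints in $X^\an$, then $\iota\circ\gamma\simeq\iota\circ\gamma'$ rel endpoints in $J^\an$, so the values agree by Proposition~\ref{prop:abelian.int} applied to $J$. Concatenation (3): $\iota\circ(\gamma'*\gamma) = (\iota\circ\gamma')*(\iota\circ\gamma)$, and additivity on $J$ gives the result. Linearity in $\omega$ (4): immediate from linearity on $J$ together with $\C_p$-linearity of $\iota^*$.

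Next, axiom (\ref{i:ftc}): let $U\subset X^\an$ be an open subdomain isomorphic to an open polydisc, $\gamma\colon[0,1]\to U$, and suppose $\iota^*\omega|_U = df$ for an analytic function $f$ on $U$. I want $\presuper\Ab\int_\gamma\iota^*\omega = f(\gamma(1))-f(\gamma(0))$. The point is that $\iota(U)$ need not be an open polydisc in $J^\an$, so I cannot directly invoke axiom (\ref{i:ftc}) for $J$. Instead I follow the mechanism in the proof of Proposition~\ref{prop:abelian.int}: lift $U$ through the Raynaud uniformization $\pi\colon E^\an\to J^\an$ to $\td U\subset E^\an$ mapping isomorphically onto $\iota(U)$ (possible since $U$, hence $\iota(U)$, is simply connected), and observe via Proposition~\ref{prop:ball.retracts.to.point} that $\trop\circ(\td\iota|_{\td U})$ is constant, where $\td\iota$ is the lift. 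Then by Proposition~\ref{prop:compare.integrals} the abelian and Berkovich--Coleman logarithms agree along paths inside $\td U$, i.e.\ $\presuper\Ab\int_{\iota\circ\gamma}\omega = \presuper\BC\int_{\iota\circ\gamma}\omega$. Now $\presuper\BC\int$ satisfies (\ref{i:ftc}) on \emph{any} open subdomain (last sentence of Definition~\ref{def:bc.int}), and since $\iota$ is an open immersion onto its image, $f$ may be transported to an analytic primitive of $\omega$ on $\iota(U)$; functoriality of $\presuper\BC\int$ then yields $\presuper\BC\int_{\iota\circ\gamma}\omega = f(\gamma(1))-f(\gamma(0))$, as desired. (Alternatively and more cleanly: $\presuper\Ab\int_\gamma\iota^*\omega = \presuper\Ab\int_{\iota\circ\gamma}\omega$, and as $\iota\circ\gamma$ lands in the image of the polydisc $U$ one applies the polydisc case of Proposition~\ref{prop:abelian.int}'s proof directly.)

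Finally, independence of the basepoint $x_0$: changing $x_0$ replaces $\iota$ by $\iota' = t\circ\iota$ where $t\colon J\to J$ is translation by a fixed point of $J(\C_p)$. Since every global $1$-form on $J$ is invariant, $t^*\omega = \omega$, hence $\iota'^*\omega = \iota^*t^*\omega = \iota^*\omega$; and on paths, $\iota'\circ\gamma = t\circ(\iota\circ\gamma)$ while $\log_{J(\C_p)}$ is a homomorphism, so $\log_{J(\C_p)}(\iota'(\gamma(1))) - \log_{J(\C_p)}(\iota'(\gamma(0))) = \log_{J(\C_p)}(\iota(\gamma(1))) - \log_{J(\C_p)}(\iota(\gamma(0)))$ because the translation contributes the same additive constant to both endpoints and it cancels. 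Therefore $\presuper\Ab\int_\gamma\iota^*\omega$ is unchanged. \textbf{The main obstacle} is purely bookkeeping in axiom~(\ref{i:ftc}): one must be careful that $\iota(U)$ is only an analytic domain, not a polydisc, and hence route the argument through the uniformization cross (Propositions~\ref{prop:compare.integrals} and \ref{prop:ball.retracts.to.point}) exactly as in the proof of Proposition~\ref{prop:abelian.int}, rather than naively citing the polydisc axiom for $J$; everything else is formal.
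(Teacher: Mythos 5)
Your proposal is correct and follows essentially the same route as the paper: the paper dispatches axioms (2)--(4) and basepoint-independence as immediate from the definitions (invariance of $1$-forms under translation, $\log_{J(\C_p)}$ being a homomorphism), and reduces axiom~(1) to Proposition~\ref{prop:ball.retracts.to.point} exactly as you do, via the constancy of $\trop$ on the lift of $\iota|_U$ and the comparison with the Berkovich--Coleman integral. The only slip is notational: since $\iota(U)$ is not open in $J^\an$, one lifts the \emph{map} $\iota|_U\colon U\to J^\an$ through $\pi$ (as in diagram~\eqref{eq:lift.aj.diagram}) rather than an open subdomain of $J^\an$, but this does not affect the argument.
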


\begin{proof}
  The only statement that does not follow immediately from the definitions is
  condition~(\ref{i:ftc}), which is a consequence of
  Proposition~\ref{prop:ball.retracts.to.point}. 
\end{proof}

By Corollary~\ref{cor:same.integrals}, the Berkovich--Coleman and abelian
integrals coincide on $X^\an$ when $J$ has good reduction and, in particular,
when $X$ has good reduction.  In this rest of this
section, we will make explicit the difference between the
Berkovich--Coleman and abelian integrals on a basic wide open subdomain in terms
of the tropical Abel--Jacobi map.

%----------------------------------------------------------------------------
\subsubsection{The tropical and algebraic Abel--Jacobi maps}
%---------------------------------------------------------------------------- 
Let $\Gamma\subset X^\an$ be a skeleton of $X$ with retraction map
$\tau\colon  X^\an\to\Gamma$, as in Section~\ref{sec:skeletons}.  The \defi{Jacobian} of the
metric graph $\Gamma$ is the quotient
$J(\Gamma) = \Div^0(\Gamma)/\Prin(\Gamma)$, where $\Div^0(\Gamma)$ is the group
of degree-zero divisors in $\Gamma$ and $\Prin(\Gamma)$ is the subgroup of
divisors of meromorphic functions on $\Gamma$ (see Section~\ref{sec:slope.formula}).
The Jacobian of $\Gamma$ is a real torus and is moreover a principally
polarized tropical abelian variety in the sense of~\cite[Section~3.7]{baker_rabinoff}.
Fixing a basepoint $P_0\in\Gamma$, we define the
\defi{tropical Abel--Jacobi map} $\beta\colon \Gamma\to J(\Gamma)$ by the usual formula
\[ \beta(P) = [(P)-(P_0)]. \]

Let $\Sigma = N_\R/\trop(M')$ be the skeleton of $J^\an$, and let
$\bar\tau\colon J^\an\to\Sigma$ be the retraction map, as
in Section~\ref{sec:uniformization}.

\begin{theorem}[Baker and Rabinoff~{\cite[Theorem~2.9, Proposition~5.3]{baker_rabinoff}}]\label{thm:skeleton.jacobian}
  There is a canonical isomorphism $\Sigma\isomto J(\Gamma)$ making the
  following square commute:
  \[\xymatrix @=.2in{
    {\Div^0(X)} \ar[d]_{\tau_*} \ar[rr] & & {J^\an} \ar[d]^{\bar\tau} \\
    {\Div^0(\Gamma)} \ar[r] & {J(\Gamma)} & {\Sigma} \ar[l]_(.4)\sim
  }\]
  In other words, for $D\in\Div^0(X)$, the point $\bar\tau([D])\in\Sigma$ is
  identified with $[\tau_*D]\in J(\Gamma)$.
\end{theorem}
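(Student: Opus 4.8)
The plan is to reduce the statement to the comparison between the two uniformization cross diagrams, one for the metric graph $\Gamma$ via tropical linear equivalence and one for the abelian variety $J$ via Raynaud uniformization. First I would recall that $J^\an$ is uniformized by $E^\an$, an extension of a good-reduction abelian variety $B$ by a torus $T$ with character lattice $M$ and cocharacter lattice $N = \Hom(M,\Z)$, and that $\Sigma = N_\R/\trop(M')$, with $\bar\tau\colon J^\an\to\Sigma$ the retraction from~\eqref{eq:trop.of.unif}. On the tropical side, $J(\Gamma) = \Div^0(\Gamma)/\Prin(\Gamma)$ is a real torus; the key structural fact, which I would invoke from the theory of metric graph Jacobians, is that $H_1(\Gamma,\Z)$ (with its lattice pairing coming from the edge lengths) plays the role of the character lattice, so that $J(\Gamma) \cong H_1(\Gamma,\R)/H_1(\Gamma,\Z)$ canonically. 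The first real task is to identify $N$ with $H_1(\Gamma,\Z)$ and $\trop(M')$ with the corresponding lattice: this is exactly the content of the cited~\cite[Theorem~2.9]{baker_rabinoff}, which constructs the canonical isomorphism $\Sigma\xrightarrow{\sim} J(\Gamma)$ by matching the monodromy/period data on both sides. I would state this as the input and not reprove it.

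Granting the isomorphism $\Sigma\cong J(\Gamma)$, the remaining content is the commutativity of the square, i.e.\ that for $D\in\Div^0(X)$ one has $\bar\tau([D]) = [\tau_*D]$ under this identification. Here I would argue as follows. Both $\bar\tau$ and $\tau_*$ are homomorphisms (the former because it is the quotient map in~\eqref{eq:trop.of.unif} restricted to $J^\an$, the latter by linearity of $\tau$), so it suffices to check the identity on a generating set of $\Div^0(X)$, namely on differences $(y)-(x)$ with $x,y\in X(\C_p)$, or even on the image of $X$ under an Abel--Jacobi map. For such a divisor, $[D]\in J(\C_p)$ has $\bar\tau([D])\in\Sigma_\Q$, and by the commutative diagram~\eqref{eq:trop.of.unif.pts} this is computed by lifting to $E(\C_p)$ and applying $\trop\colon E(\C_p)\to N_\Q$. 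The claim then becomes: the composite $X(\C_p)\to J(\C_p)\to\Sigma_\Q$ agrees, under $\Sigma\cong J(\Gamma)$, with $P\mapsto \tau_*(P) - \tau_*(x_0)$ followed by the tropical Abel--Jacobi map $\beta\colon\Gamma\to J(\Gamma)$. This is precisely~\cite[Proposition~5.3]{baker_rabinoff}, which says that the retraction intertwines the algebraic and tropical Abel--Jacobi maps; I would cite it and then extend from points of $X$ to all of $\Div^0(X)$ by the homomorphism property just noted.

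The one genuine verification I would spell out is the compatibility of the two cited results: \cite{baker_rabinoff} states Theorem~2.9 and Proposition~5.3 separately, and I need that the isomorphism $\Sigma\cong J(\Gamma)$ used in Proposition~5.3 is the \emph{same} one as in Theorem~2.9. Since both are constructed canonically from the period lattices, this is automatic, but I would remark on it so that the composite diagram in the statement is unambiguous. Finally, to see that $\tau_*$ lands in $\Div^0(\Gamma)$: $\tau$ is continuous and $\Gamma$ is connected, and $\tau_*$ preserves degree because it is defined on points, so $\deg(\tau_*D) = \deg(D) = 0$. The main obstacle is purely expository rather than mathematical: the heavy lifting is entirely in the two theorems of Baker--Rabinoff, and the work here is to organize them into the single commuting square and to reduce from divisors to points via linearity. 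I would therefore keep the proof short, essentially: "$\tau_*$ and $\bar\tau$ are homomorphisms; on points of $X$ this is~\cite[Proposition~5.3]{baker_rabinoff}; the identification $\Sigma\cong J(\Gamma)$ is~\cite[Theorem~2.9]{baker_rabinoff}; extend by linearity."
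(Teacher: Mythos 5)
The paper offers no proof of this statement at all---it is imported verbatim as a result of Baker--Rabinoff, exactly as your proposal does, so the substance of your argument (defer the isomorphism $\Sigma\isomto J(\Gamma)$ and the compatibility with retraction to the cited results, then extend from points to $\Div^0(X)$ by linearity of $\tau_*$ and the homomorphism property of $\bar\tau$) matches the paper's treatment. The only small caveat is that your route through the Abel--Jacobi map is a slight detour: the cited Proposition~5.3 of Baker--Rabinoff already gives the divisor-level compatibility $\bar\tau([D])=[\tau_*D]$ directly, while the Abel--Jacobi compatibility you invoke is a separate statement (Proposition~6.2 of Baker--Rabinoff, recorded later in this paper as Proposition~\ref{prop:compat.aj}), so your linearity reduction is harmless but not needed.
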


From now on we will implicitly identify $J(\Gamma)$ with $\Sigma$.
In~\cite[Proposition~6.2]{baker_rabinoff} it is shown that
Theorem~\ref{thm:skeleton.jacobian} is compatible (under retraction) with the algebraic and
tropical Abel--Jacobi maps.

\begin{proposition}[Baker and Rabinoff~{\cite[Proposition~6.2]{baker_rabinoff}}]
\label{prop:compat.aj}
  Fix $x_0\in X(\C_p)$ and $P_0 = \tau(x_0)\in\Gamma$, and let $\iota\colon X\to J$
  and $\beta\colon \Gamma\to\Sigma$ be the corresponding Abel--Jacobi maps.  Then the
  following square commutes:
  \[\xymatrix @=.2in{
  {X^\an} \ar[r]^\iota \ar[d]_\tau & {J^\an} \ar[d]^{\bar\tau} \\
  {\Gamma} \ar[r]_\beta & {\Sigma}
  }\]
\end{proposition}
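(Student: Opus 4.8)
The plan is to deduce Proposition~\ref{prop:compat.aj} from Theorem~\ref{thm:skeleton.jacobian} together with the commuting diagram~\eqref{eq:trop.of.unif} that exhibits $\bar\tau\colon J^\an\to\Sigma$. The key observation is that both horizontal maps $\iota$ and $\beta$ are defined by the very same formula ``$P\mapsto [(P)-(P_0)]$'', just in the algebraic and tropical worlds respectively, so it suffices to track a single point through the square. Concretely, fix $x\in X^\an$; I want to show $\bar\tau(\iota(x)) = \beta(\tau(x))$ in $\Sigma = J(\Gamma)$.

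The argument proceeds in three steps. First, reduce to the case $x = x'\in X(\C_p)$: the $\C_p$-points are dense in $X^\an$ (by~\cite[Proposition~2.1.15]{berkovic90:analytic_geometry}), and all four maps $\iota$, $\tau$, $\bar\tau$, $\beta$ are continuous, so it is enough to check the identity on a dense set; I would note that $\beta$ is continuous because it is piecewise affine and $\tau$, $\bar\tau$ are the deformation retractions. Second, for $x'\in X(\C_p)$ the divisor $D = (x') - (x_0)\in\Div^0(X)$ satisfies $\iota(x') = [D]$ in $J^\an$ by the definition of the Abel--Jacobi map, and $\tau_* D = (\tau(x')) - (\tau(x_0)) = (\tau(x')) - (P_0)\in\Div^0(\Gamma)$ by linearity of $\tau_*$ and the choice $P_0 = \tau(x_0)$; the latter class in $J(\Gamma)$ is exactly $\beta(\tau(x'))$ by the definition of the tropical Abel--Jacobi map. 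Third, apply Theorem~\ref{thm:skeleton.jacobian} to the divisor $D$: it gives $\bar\tau([D]) = [\tau_* D]$ under the identification $\Sigma\isomto J(\Gamma)$, i.e., $\bar\tau(\iota(x')) = [\tau_* D] = \beta(\tau(x'))$. This closes the square on $X(\C_p)$, and by density it closes everywhere.

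There is essentially no hard analytic input left here: the entire weight of the proof has been front-loaded into Theorem~\ref{thm:skeleton.jacobian} (the compatibility of $\bar\tau$ with $\tau_*$ on degree-zero divisors), which in turn rests on the non-Archimedean uniformization theory of Section~\ref{sec:uniformization}. The only point requiring a moment's care is the continuity/density reduction in the first step: one must make sure that $\beta$ is genuinely continuous on all of $\Gamma$ (including at vertices, where piecewise-affine functions can still be continuous) and that $\tau$ is continuous on $X^\an$, both of which are standard. So the main obstacle is not in this proof at all but in the cited Theorem~\ref{thm:skeleton.jacobian}; given that, Proposition~\ref{prop:compat.aj} is a formal consequence obtained by chasing the defining formula of the Abel--Jacobi map around the square.
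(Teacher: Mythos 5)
Your argument is correct. Note that the paper offers no proof of this proposition at all: it is quoted verbatim from Baker--Rabinoff \cite[Proposition~6.2]{baker_rabinoff}, just as Theorem~\ref{thm:skeleton.jacobian} is. Your deduction --- apply Theorem~\ref{thm:skeleton.jacobian} to the divisor $D=(x')-(x_0)$ for $x'\in X(\C_p)$, observe that $\tau_*D=(\tau(x'))-(P_0)$ so that $[\tau_*D]=\beta(\tau(x'))$, and then extend from the dense subset $X(\C_p)$ to all of $X^\an$ by continuity of both composites into the Hausdorff torus $\Sigma$ --- is exactly the natural route and is essentially how the cited source proves it. The two points you flag as needing care (continuity of $\beta$, which is piecewise affine by Mikhalkin--Zharkov, and density of $\C_p$-points) are indeed the only things to check, and both are standard.
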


From now on we assume that the algebraic and tropical Abel--Jacobi maps are
taken with respect to compatible basepoints as in
Proposition~\ref{prop:compat.aj}.  Let $V\subset\Gamma$ be a simply connected
open subgraph with edge lengths in $\Q$, and let $U = \tau^{-1}(V)$, an open
analytic domain in $X^\an$.  For example, $U$ could be a basic wide open
subdomain.  As $U$ is simply connected as well, the restriction of the
Abel--Jacobi map $\iota\colon  X^\an\to J^\an$ to $U$ lifts uniquely through the universal cover
$\pi\colon E^\an\to J^\an$ to a morphism $\td\iota\colon  U\to E^\an$ taking the
basepoint to the origin.  Since
$U\setminus V$ is a disjoint union of open discs, each retracting to a unique
point of $V$, by Proposition~\ref{prop:ball.retracts.to.point} the composition
$\trop\circ\,\td\iota\colon U\to N_\R$ factors through the retraction to the skeleton
$\tau\colon U\to V$.  Moreover, by Proposition~\ref{prop:compat.aj} the restriction
$\td\beta$ of $\trop\circ\,\td\iota$ to $V$ is a lift of the restriction of the
tropical Abel--Jacobi map $\beta\colon \Gamma\to\Sigma$ to $V$ through the universal
covering map $N_\R\to\Sigma$.  In summary, the following diagram is commutative:
\begin{equation}\label{eq:lift.aj.diagram}
\xymatrix{
  {U} \ar[r]_{\td\iota} \ar@/^1.5pc/[rr]^\iota \ar[d]_\tau &
  {E^\an} \ar[r]_\pi \ar[d]^{\trop} & {J^\an} \ar[d]^{\bar\tau} \\
  {V} \ar[r]^{\td\beta} \ar@/_1.5pc/[rr]_\beta &
  {N_\R} \ar[r] & {\Sigma}
}\end{equation}

The following result of
Mikhalkin and Zharkov, which is a consequence of the discussion in Section~6 of \cite{mikhalkin_zharkov08:tropical_curves}
(see also~\cite[Theorem~4.1]{baker_faber11:metric_properties_tropical_abel_jacobi}), 
says that the map $\td\beta\colon V\to N_\R$ is very well behaved.

\begin{theorem}[Mikhalkin and Zharkov~{\cite[Section~6]{mikhalkin_zharkov08:tropical_curves}}]\label{thm:tropical.aj.is.nice}
  The partial lift $\td\beta\colon V\to N_\R$ of $\beta\colon \Gamma\to\Sigma$ satisfies the
  following properties.
  \begin{enumerate}
  \item If $e\subset V$ is an edge such that $\Gamma\setminus e$ is
    disconnected, then $\td\beta$ is constant on $e$.
  \item If $e\subset V$ is an edge such that $\Gamma\setminus e$ is
    connected, then $\td\beta$ is affine-linear on $e$ with rational slopes.
  \item Vertices of $V$ map into $N_\Q$.
  \item $\td\beta$ satisfies the tropical balancing condition.
  \end{enumerate}
\end{theorem}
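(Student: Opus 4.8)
The strategy is to deduce the four properties from the explicit combinatorial model of the tropical Abel--Jacobi map, so the first step is to recall that model (this is the content of \cite[Section~6]{mikhalkin_zharkov08:tropical_curves}). Let $\Omega(\Gamma)$ be the real vector space of harmonic $1$-forms on $\Gamma$, i.e.\ the space of expressions $\omega = \sum_e a_e\,dx_e$ with $a_e\in\R$ constant on each (oriented) edge, subject to the flux relation $\sum_{e\ni v}\epsilon_{e,v}\,a_e = 0$ at every vertex $v$; assigning to a simple cycle the indicator chain of its edges identifies $\Omega(\Gamma)$ with $H_1(\Gamma,\R)$. Integration over cycles embeds $H_1(\Gamma,\Z)$ as a full-rank lattice in $\Omega(\Gamma)^\ast$, and $J(\Gamma) = \Omega(\Gamma)^\ast/H_1(\Gamma,\Z)$, with $\beta(P)$ the class of the functional $\omega\mapsto\int_{\sigma_P}\omega$ for any path $\sigma_P$ from $P_0$ to $P$. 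The first thing to nail down is that, under the isomorphism $\Sigma\isomto J(\Gamma)$ of Theorem~\ref{thm:skeleton.jacobian} and the compatibility of Abel--Jacobi maps in Proposition~\ref{prop:compat.aj}, the universal cover $N_\R$ of $\Sigma$ is identified with $\Omega(\Gamma)^\ast$, the lattice $\trop(M')$ with $H_1(\Gamma,\Z)$, and the lift $\td\beta = (\trop\circ\td\iota)|_V$ with $P\mapsto\big(\omega\mapsto\int_{\sigma_P}\omega\big)$, where now $\sigma_P$ is the unique reduced path from $P_0$ to $P$ inside the simply connected graph $V$. Since $\trop(M')$ is a full-rank lattice contained in $N_\Q$, its $\Q$-span is all of $N_\Q$; thus $N_\Q$ is identified with the $\Q$-span inside $\Omega(\Gamma)^\ast$ of the integral cycle classes.

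Granting this dictionary, (1) and (2) are a single computation. Parametrize an edge $e\subset V$ by arc length $t\in[0,\ell(e)]$, with $\ell(e)\in\Q$ and $e$ oriented in the direction of increasing $t$. Then for $\omega\in\Omega(\Gamma)$ one has $\big\langle\td\beta(P(t)),\omega\big\rangle - \big\langle\td\beta(P(0)),\omega\big\rangle = \int_0^t\omega|_e = a_e(\omega)\,t$, so $\td\beta$ is affine-linear on $e$ with constant velocity $d_e\in\Omega(\Gamma)^\ast$ determined by $\langle d_e,\omega\rangle = a_e(\omega)$. Now $d_e$ pairs integrally with every integral cycle class $\gamma$ (viewed in $\Omega(\Gamma)$, where the $e$-coefficient of $\gamma$ is $0$ or $\pm1$); writing $d_e = \sum_i c_i\,\iota(\gamma_i)$ for an integral basis $\{\gamma_i\}$ of $H_1(\Gamma,\Z)$ and pairing with each $\gamma_j$, the vector $(c_i)$ equals $G^{-1}$ applied to an integer vector, where $G = \big(\langle\gamma_i,\gamma_j\rangle\big)$ is the Gram matrix of the canonical inner product — a rational, positive-definite matrix since the edge lengths lie in $\Q$. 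Hence $c_i\in\Q$ and $d_e\in N_\Q$, which gives the rationality of the slope in (2), the arc-length parameter being $\Q$-affine. If moreover $\Gamma\setminus e$ is disconnected then $e$ lies on no simple cycle, so $a_e$ vanishes identically on $\Omega(\Gamma) = H_1(\Gamma,\R)$; hence $d_e = 0$ and $\td\beta$ is constant on $e$, which is (1).

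Parts (3) and (4) then follow formally. A vertex $w$ of $V$ is joined to $P_0$ by a reduced path in $V$ crossing finitely many (sub-)edges $e_i$ of rational length, so $\td\beta(w) = \td\beta(P_0) + \sum_i\pm\ell(e_i)\,d_{e_i}$ lies in $N_\Q$ by (1)--(2) (with $\td\beta(P_0) = 0$); this is (3). For the balancing condition (4) at a vertex $w$ of $V$, orient each incident edge $e$ away from $w$ and let $d_{e,w}\in N_\Q$ be the resulting outgoing slope of $\td\beta$ at $w$; then for every $\omega\in\Omega(\Gamma)$,
\[
\Big\langle\sum_{e\ni w}d_{e,w},\,\omega\Big\rangle \;=\; \sum_{e\ni w}a_e(\omega) \;=\; 0
\]
by the flux relation defining harmonicity, whence $\sum_{e\ni w}d_{e,w} = 0$ — exactly tropical balancing, with bridge edges contributing $0$ consistently with (1).

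The proof is thus essentially bookkeeping, and the only point requiring genuine care is the first step: matching the Mikhalkin--Zharkov presentation $J(\Gamma) = \Omega(\Gamma)^\ast/H_1(\Gamma,\Z)$ with the Baker--Rabinoff skeleton $\Sigma = N_\R/\trop(M')$ of Theorem~\ref{thm:skeleton.jacobian}, so that the integral and rational structures correspond and the two Abel--Jacobi maps agree under retraction (Proposition~\ref{prop:compat.aj}). This dictionary is implicit in \cite{baker_rabinoff} and \cite{mikhalkin_zharkov08:tropical_curves} and also underlies \cite[Theorem~4.1]{baker_faber11:metric_properties_tropical_abel_jacobi}; once it is in hand there is no further obstacle.
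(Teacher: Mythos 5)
Your argument is correct, but note that the paper does not prove this theorem at all: it is stated as a citation to Mikhalkin--Zharkov (with Baker--Faber, Theorem~4.1, as an alternative reference), so there is no internal proof to compare against. What you have done is reconstruct the content of those references, namely the combinatorial model $J(\Gamma)=\Omega(\Gamma)^\ast/H_1(\Gamma,\Z)$ with $\beta$ given by path integration against harmonic $1$-forms, from which (1), (2) and (4) are indeed immediate (bridge edges carry no harmonic form; the velocity on an edge is the evaluation functional $\omega\mapsto a_e(\omega)$; balancing is the flux condition), and (3) and the rationality in (2) follow from your Gram-matrix argument, which correctly uses that edge lengths are rational (guaranteed here since the value group of $\C_p$ is $\Q$, and the paper's setup explicitly assumes $V$ has rational edge lengths). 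The one step you rightly flag as needing care --- matching $(N_\R,\trop(M'))$ with $(\Omega(\Gamma)^\ast,H_1(\Gamma,\Z))$ so that $\td\beta$ becomes path integration --- is exactly what Theorem~\ref{thm:skeleton.jacobian} and Proposition~\ref{prop:compat.aj} supply once the isomorphism of real tori is lifted to universal covers, so citing it is legitimate. Two small points: for a general integral cycle the $e$-coefficient is an integer, not necessarily $0$ or $\pm 1$ (harmless, since integrality of the pairing is all you use); and your proof of (3) assumes the basepoint $P_0$ lies in $V$ with $\td\beta(P_0)=0$ --- if it does not, observe that (1), (2), (4) are insensitive to the choice of lift, and (3) then requires the additional (easy) remark that $\beta$ of any vertex lies in $\Sigma_\Q=N_\Q/\trop(M')$, proved by running your edge-by-edge computation along a path in all of $\Gamma$ rather than in $V$.
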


The balancing condition in the last part of
Theorem~\ref{thm:tropical.aj.is.nice} roughly says that at any vertex $v\in V$,
a weighted sum of the images of the tangent vectors at $v$ under $\td\beta$ is
equal to zero.  This implies, for instance, that if $v$ has three adjacent edges
$e_1,e_2,e_3$, then their images under $\td\beta$ are coplanar (see the end of
~\cite[Section~3]{baker_faber11:metric_properties_tropical_abel_jacobi} for details).

%----------------------------------------------------------------------------
\subsubsection{Comparison of the integrals, bis}
%---------------------------------------------------------------------------- 
Now we are able to draw some consequences for integration on basic wide open
subdomains.
Suppose that $V$ is an open star neighborhood of a type-$2$ point
$\zeta\in\Gamma$ as in Section~\ref{sec:basic.wide.open}, so that $U = \tau^{-1}(V)$
is a basic wide open subdomain.  Recall that $\deg(\zeta)$ denotes the valency
of $\zeta$ as a vertex in $\Gamma$, which is at least $1$ since a basic wide
open by definition has at least one end.

\begin{proposition}\label{prop:correction.wide.open}
  Let $H\subset\Omega_{X/\C_p}^1(X)$ be the subspace of those $1$-forms $\omega$ such
  that $\presuper\BC\int_\gamma\omega = \presuper\Ab\int_\gamma\omega$ for all
  paths $\gamma\colon [0,1]\to U$ with endpoints in $U(\C_p)$.  Then the codimension
  of $H$ in $\Omega_{X/\C_p}^1(X)$ is strictly less than $\deg(\zeta)$.
\end{proposition}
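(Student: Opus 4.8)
The plan is to reduce the statement to a dimension count on the tropical side, via the comparison theorem (Proposition~\ref{prop:compare.integrals}) together with the compatibility of the algebraic and tropical Abel--Jacobi maps recorded in~\eqref{eq:lift.aj.diagram}. Throughout I would write $r = \deg(\zeta) \ge 1$ and identify $\Omega^1_{X/\C_p}(X)$ with $\Omega^1_{J/\C_p}(J) = \Omega^1_\inv(E)$ via $\iota^*$ and $\pi^*$, where $\pi\colon E^\an \to J^\an$ is the uniformization and $\td\iota\colon U \to E^\an$ is the lift of $\iota|_U$ fixed in~\eqref{eq:lift.aj.diagram}.

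First I would make the difference of the two integrals completely explicit on $U$. For a path $\gamma\colon [0,1] \to U$ with endpoints in $U(\C_p)$ and $\omega \in \Omega^1_\inv(E)$, functoriality of the Berkovich--Coleman integral (Definition~\ref{def:bc.int}(2)) gives $\presuper\BC\int_\gamma \iota^*\omega = \presuper\BC\int_{\td\iota\circ\gamma}\omega$, which since $E^\an$ is simply connected equals $\langle \log_\BC(\td\iota\gamma(1)) - \log_\BC(\td\iota\gamma(0)),\,\omega\rangle$; likewise, unwinding the definition of the abelian integral on $X$ and on $J$ and using functoriality of $\log_{J(\C_p)}$ gives $\presuper\Ab\int_\gamma\iota^*\omega = \langle \log_\Ab(\td\iota\gamma(1)) - \log_\Ab(\td\iota\gamma(0)),\,\omega\rangle$. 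Subtracting and applying Proposition~\ref{prop:compare.integrals} (so that $\log_\BC - \log_\Ab = L \circ \trop$ with $L$ a $\Q$-linear map) together with the identity $\trop\circ\td\iota = \td\beta\circ\tau$ from~\eqref{eq:lift.aj.diagram}, I obtain
\begin{equation*}
  \presuper\BC\int_\gamma\iota^*\omega - \presuper\Ab\int_\gamma\iota^*\omega = \big\langle L\big(\td\beta(\tau\gamma(1)) - \td\beta(\tau\gamma(0))\big),\, \omega\big\rangle .
\end{equation*}
Hence, letting $W \subseteq N_\R$ be the $\R$-span of $\{\,\td\beta(P) - \td\beta(Q) : P,Q \in \tau(U(\C_p))\,\}$, the form $\iota^*\omega$ lies in $H$ exactly when $\omega$ annihilates $L(W)$, so $\operatorname{codim} H = \dim_{\C_p} L(W) \le \dim_\R W$. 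Since $V$ is connected and $\tau(U(\C_p))$ is dense in $V$ (one gets dense sets of interior points of each edge $e_i$ from $\C_p$-points of the annulus $A_i$, and the point $\zeta$ itself from $\C_p$-points of $\tau^{-1}(\zeta)$), continuity of $\td\beta$ shows that $W$ is the $\R$-span of the slope vectors $w_1,\dots,w_r$ of $\td\beta$ along $e_1,\dots,e_r$.

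The crucial input is then Theorem~\ref{thm:tropical.aj.is.nice}: parts (1)--(3) guarantee that these slope vectors $w_i$ are well defined and lie in $N_\Q$ (being $0$ precisely on edges lying on a bridge of $\Gamma$), and part~(4), the balancing condition at the vertex $\zeta$, produces a nontrivial linear relation $\sum_{i=1}^{r} m_i w_i = 0$ with positive integer weights $m_i$ on the nonzero $w_i$. (If every $w_i$ vanishes there is nothing left to prove, since then $W = 0$ and $r \ge 1$.) A nontrivial linear relation among the $r$ vectors $w_1,\dots,w_r$ forces $\dim_\R W = \dim_\R \operatorname{span}\{w_i\} \le r - 1$, and combining the inequalities yields $\operatorname{codim} H \le r - 1 < r = \deg(\zeta)$, which is the assertion.

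I expect the only real subtlety to be the bookkeeping in the first step: lining up the two logarithms $\log_\BC$ and $\log_\Ab$ as honest homomorphisms $E(\C_p) \to \Lie(J)$, checking that the partial lift $\td\iota$ sends $U(\C_p)$ into $E(\C_p)$, and verifying that $\trop\circ\td\iota = \td\beta\circ\tau$ as claimed in~\eqref{eq:lift.aj.diagram}. The conceptual core of the argument — that a basic wide open with $r$ ends can ``see'' at most an $(r-1)$-dimensional space of periods — is nothing more than the tropical balancing condition at the central vertex, so once the explicit comparison formula above is in place the proof is essentially immediate.
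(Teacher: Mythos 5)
Your proposal is correct and follows essentially the same route as the paper's proof: both reduce the comparison to the formula $\log_\BC-\log_\Ab=L\circ\trop$ from Proposition~\ref{prop:compare.integrals}, transport it to $U$ via the lifted Abel--Jacobi diagram~\eqref{eq:lift.aj.diagram}, and then bound the span of $\td\beta(V)$ by $\deg(\zeta)-1$ using the balancing condition of Theorem~\ref{thm:tropical.aj.is.nice}. Your write-up merely makes two points explicit that the paper leaves implicit (the density/continuity step identifying the span with that of the edge slope vectors $w_1,\dots,w_r$, and the precise use of the relation $\sum m_iw_i=0$), which is a harmless elaboration rather than a different argument.
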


\begin{proof}
  We are free to choose the basepoint $x_0$ in $U(\C_p)$ such that
  $P_0 = \tau(x_0) = \zeta$.  We choose the lift $\td\iota\colon U\to E^\an$ of
  $\iota$ such that $\td\iota(x_0) = 0$.  Since we can compose paths, we have
  $\omega\in H$ if and only if
  $\presuper\BC\int_\gamma\omega = \presuper\Ab\int_\gamma\omega$ for all paths
  $\gamma$ such that $\gamma(0) = x_0$.  As $U$ is simply connected, the
  Berkovich--Coleman integral is path-independent, so we write
  $\presuper\BC\int_{x_0}^{x}\omega = \presuper\BC\int_{\gamma}\omega$ for any
  path $\gamma$ from $x_0$ to $x\in U(\C_p)$.

  By Proposition~\ref{prop:compare.integrals}
  and~\eqref{eq:integral.difference}, there is a linear map
  $$L\colon N_\Q\to\Lie(J) = \Hom(\Omega_{X/\C_p}^1(X),\C_p)$$ such that for all
  $\omega\in\Omega_{X/\C_p}^1(X)$ and all $x\in U(\C_p)$, we have
  \[ \presuper\BC\int_{x_0}^x\omega - \presuper\Ab\int_{x_0}^x\omega =
  \angles{L\circ\trop(\td\iota(x)),\,\omega}. \]
  By the balancing condition in Theorem~\ref{thm:tropical.aj.is.nice},
  $\trop(\td\iota(U(\C_p))) = \td\beta(V)\cap N_\Q$ spans a $\Q$-vector space
  of dimension at most $\deg(\zeta)-1$ (note that $\td\beta(\zeta) = 0$ since
  $\zeta$ is the basepoint of the tropical Abel--Jacobi map $\beta$).  Therefore
  the annihilator $H$ of $L(\trop(\td\iota(U(\C_p))))$ has dimension strictly
  less than $\deg(\zeta)$.
\end{proof}

If $V\subset\Gamma$ is an open edge, then the open annulus $U = \tau^{-1}(V)$ is
a basic wide open subdomain with respect to any type-$2$ point $\zeta\in V$.
In this case one has the following slightly more precise variant of
Proposition~\ref{prop:correction.wide.open}, recovering a result of
Stoll~\cite[Proposition~7.3]{Stoll:uniformity}.

\begin{proposition}\label{prop:correction.annulus}
  Let $e\subset\Gamma$ be an open edge, and let $A = \tau^{-1}(e)\subset X^\an$,
  an open annulus.  Choose an identification $A\cong\bS(\varpi)_+$ with the standard
  open annulus of inner radius $|\varpi|$ and outer radius $1$.  Then for all
  $\omega\in\Omega_{X/\C_p}^1(X)$ there exists $a(\omega)\in\C_p$ such that
  \[ \presuper\BC\int_{x}^y\omega - \presuper\Ab\int_{x}^y\omega =
  a(\omega)\big(\val(y) - \val(x)\big) \]
  for all $x,y\in\bS(\varpi)_+(\C_p)$.  Moreover, $\omega\mapsto a(\omega)$ is
  $\C_p$-linear.
\end{proposition}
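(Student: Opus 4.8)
The plan is to deduce Proposition~\ref{prop:correction.annulus} from Proposition~\ref{prop:correction.wide.open} (or directly from Proposition~\ref{prop:compare.integrals}) by observing that for an annulus the tropical Abel--Jacobi map is pinned down very explicitly by Theorem~\ref{thm:tropical.aj.is.nice}. First I would set up notation: let $e \subset \Gamma$ be the open edge, $A = \tau^{-1}(e) \cong \bS(\varpi)_+$, and pick any type-$2$ point $\zeta \in e$, which makes $A$ a basic wide open subdomain with underlying affinoid $\tau^{-1}(\zeta)$ and exactly $\deg(\zeta) = 2$ ends (the two tangent directions along $e$ at $\zeta$). Choose the basepoint $x_0 \in A(\C_p)$ with $\tau(x_0) = \zeta$, and lift the Abel--Jacobi map $\iota$ to $\td\iota\colon A \to E^\an$ with $\td\iota(x_0) = 0$. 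By Proposition~\ref{prop:compare.integrals} and~\eqref{eq:integral.difference}, there is a $\Q$-linear map $L\colon N_\Q \to \Lie(J)$ with
\[
  \presuper\BC\int_{x_0}^{x}\omega - \presuper\Ab\int_{x_0}^{x}\omega = \angles{L\circ\trop(\td\iota(x)),\,\omega}
\]
for all $x \in A(\C_p)$ and all $\omega$.

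Next I would pin down $\trop\circ\td\iota$ on $A$. By Proposition~\ref{prop:ball.retracts.to.point}, the composition $\trop\circ\td\iota\colon A \to N_\R$ factors through the retraction $\tau\colon A \to e$, so it equals $\td\beta \circ \tau$ where $\td\beta\colon e \to N_\R$ is the partial lift of the tropical Abel--Jacobi map $\beta$. Now apply Theorem~\ref{thm:tropical.aj.is.nice}: the edge $e$ of the skeleton $\Gamma$ is a single edge, and $\Gamma \setminus e$ may be connected or disconnected. If $\Gamma \setminus e$ is disconnected, $\td\beta$ is constant on $e$; if connected, $\td\beta$ is affine-linear on $e$ with some rational slope vector $v_e \in N_\Q$. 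In either case $\td\beta$ is affine-linear on $e$, and using the standard length/coordinate identification --- the point of $e$ at logarithmic distance $\log|\varpi| - \val(T(x))$... more precisely, the retraction $\tau\colon \bS(\varpi)_+ \to e$ identifies $e$ with the open interval $(0, \val(\varpi))$ via $x \mapsto \val(T(x))$ (with appropriate orientation), so $\td\beta(\tau(x))$ is an affine-linear function of $\val(T(x)) = \val(x)$. Hence $\trop(\td\iota(x)) = w_0 + \val(x)\, v_e$ for a fixed $w_0 \in N_\Q$ and fixed $v_e \in N_\Q$ (with $v_e = 0$ in the disconnected case).

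Substituting, for $x, y \in \bS(\varpi)_+(\C_p)$ I get
\[
  \presuper\BC\int_{x}^{y}\omega - \presuper\Ab\int_{x}^{y}\omega
  = \angles{L(w_0 + \val(y)v_e),\,\omega} - \angles{L(w_0 + \val(x)v_e),\,\omega}
  = \angles{L(v_e),\,\omega}\,\big(\val(y) - \val(x)\big),
\]
where I have used linearity of $L$ and bilinearity of the pairing, and the $w_0$-terms cancel. Setting $a(\omega) = \angles{L(v_e),\,\omega}$ gives exactly the claimed formula, and $\C_p$-linearity of $\omega \mapsto a(\omega)$ is immediate since it is the pairing of the fixed vector $L(v_e) \in \Lie(J)$ with $\omega$. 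The main (minor) obstacle is bookkeeping the orientation and normalization conventions so that the coordinate on $e$ really is $\val(T(x))$ up to an additive constant and sign --- i.e., matching the convention in Section~\ref{sec:de.rham}/Section~\ref{sec:skeletons} for the skeleton of $\bS(\varpi)_+$ with the one used implicitly in $\trop$ and $\td\beta$ --- but this only affects the sign/identification of $v_e$, not the structure of the formula. One should also note that $a(\omega)$ does not depend on the choice of $\zeta \in e$ or of $x_0$, since the left-hand side manifestly does not; and it is independent of the choice of identification $A \cong \bS(\varpi)_+$ up to the sign built into $\val$, which is exactly the ambiguity in the orientation of the annulus.
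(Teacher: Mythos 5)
Your proposal is correct and is essentially the argument the paper intends: the paper leaves this proof to the reader as ``almost identical'' to that of Proposition~\ref{prop:correction.wide.open}, and you have carried out exactly that argument, specializing to the case $V=e$ where Theorem~\ref{thm:tropical.aj.is.nice} makes $\td\beta$ affine-linear in $\val(T(x))$ and hence yields the precise formula $a(\omega)(\val(y)-\val(x))$ with $a(\omega)=\angles{L(v_e),\omega}$ rather than merely a codimension bound.
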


The proof is almost identical to that of Proposition~\ref{prop:correction.wide.open} and is left to the reader.

\begin{remark}
We expect that the above results should make it possible to compute abelian integrals on hyperelliptic curves of bad reduction in residue characteristic greater than $2$.  Such curves have an explicit cover by hyperelliptic wide opens that can be obtained from their defining equations (see ~\cite{Stoll:uniformity}).  The Balakrishnan--Bradshaw--Kedlaya algorithm \cite{BBK} can be applied to such wide opens to compute Berkovich--Coleman integrals.  After determining the tropical Abel--Jacobi map through the use of tropical $1$-forms (see~\cite{mikhalkin_zharkov08:tropical_curves}), one can then obtain the abelian integrals.
\end{remark}

%%%%%%%%%%%%%%%%%%%%%%%%%%%%%%%%%%%%%%%%%%%%%%%%%%%%%%%%
\section{Bounding zeros of integrals on wide opens}
\label{sec:bounds-zeroes}
%%%%%%%%%%%%%%%%%%%%%%%%%%%%%%%%%%%%%%%%%%%%%%%%%%%%%%%%

In this section, we leverage Proposition~\ref{prop:zeros.on.wide.open} to bound
the number of zeros of the Berkovich--Coleman integral of an \emph{exact}
$1$-form $\omega = df$ on a basic wide open curve.  This amounts to relating the
slopes of $-\log\|\omega\|$ to those of $-\log|f|$ on an annulus, which we do
in Proposition~\ref{p:annularbounds}.  To eventually obtain bounds depending
essentially only on the genus, we will also need a combinatorial argument about
stable metric graphs, which we make in Lemmas~\ref{lem:graph.combinatorics} and~\ref{lem:unstable.combinatorics}.  The
main result of the section is Theorem~\ref{thm:wideopen.bounds}.

In this section we work over $K = \C_p$.

\subsection{Slopes on annuli}\label{sec:slopes.on.annuli}
First, we recall the relationship between Newton polygons and slopes on the
skeleton of an annulus.  Let $\varpi\in\C_p^\times$ with $|\varpi| < 1$, and
recall that $\bS(\varpi)_+$ denotes the open annulus of outer radius $1$ and
inner radius $|\varpi|$.  Let $a=\val(\varpi)$, the logarithmic modulus of
$\bS(\varpi)_+$.  An analytic function on $\bS(\varpi)_+$ can be expressed as an
infinite-tailed Laurent series $\sum_{n\in\Z} a_n T^n$ with the property that
\[ \val(a_n) + nr \to \infty \sptxt{as} n\to\pm\infty \]
for all $r\in(0,a)$.
For $r\in(0,a)$, we set
\[ \bigg\|\sum a_n T^n\bigg\|_r = \max\big\{ |a_n|\exp(-nr) \big\}. \]
This is a multiplicative seminorm which defines a point $\xi_r\in\bS(\varpi)_+$.
The map $\sigma\colon(0,a)\to\bS(\varpi)_+$ given by $\sigma(r) = \xi_r$
is a continuous embedding and its image
$\Sigma(\bS(\varpi)_+) \coloneq \sigma((0,a))$ is by definition the skeleton of
$\bS(\varpi)_+$. 

Note that if $f(T) = \sum a_n T^n$ is an analytic function on $\bS(\varpi)_+$,
$F = -\log|f|$, and $\xi_r = \sigma(r)\in\Sigma(\bS(\varpi)_+)$, then by
definition
\begin{equation}\label{eq:F.of.xir}
  F(\xi_r) = -\log\|f\|_r = \min\{\val(a_n) + nr ~:~ a_n\neq 0\}.
\end{equation}

See Figure~\ref{fig:newton.poly.annulus} for an illustration in terms of Newton polygons.

\begin{figure}[ht]
  \centering
  \begin{tikzpicture}[every circle/.style={fill=black, radius=.7mm}, scale=.8,
    help lines/.style={black!20!white, line width=.2pt},
    every node/.style={font=\tiny, fill=white, inner sep=0pt}]
    \draw [help lines] (-10,-2) grid (2,4);
    \draw [thick, ->] (-10, 0) -- (2,0);
    \draw [thick, ->] (0, -2) -- (0,4);
    \draw [dashed] (-10, 4) -- (2,-2)
          node [pos=.1, below=2mm] {$r = 1/2$};
    \filldraw (-8.3, 4) -- (-6, 2) circle
           -- (-4, 1) circle -- (1, -.5) circle -- (2,-.6);
    \fill (-7, 3.5) circle[] (-5, 3) circle[] (-3, 2) circle[]
          (-2, 1.5) circle[] (-1, 3) circle[] (0, 2) circle;
    \node at (-4, 0) [below=2mm] {$-4$};
    \node at (0,-1) [left=2mm] {$-1$};
  \end{tikzpicture}
  
  \caption{A possible Newton polygon of an analytic function $f = \sum a_nT^n$
    on an open annulus.  The dashed line is $y + \frac 12x = -1$.  If
    $F = -\log|f|$, then $F(\xi_{1/2}) = -1$, and $d_v F(\xi_{1/2}) = 4$ in the
    notation of Lemma~\ref{lem:slope.from.np}.}
  \label{fig:newton.poly.annulus}
\end{figure}
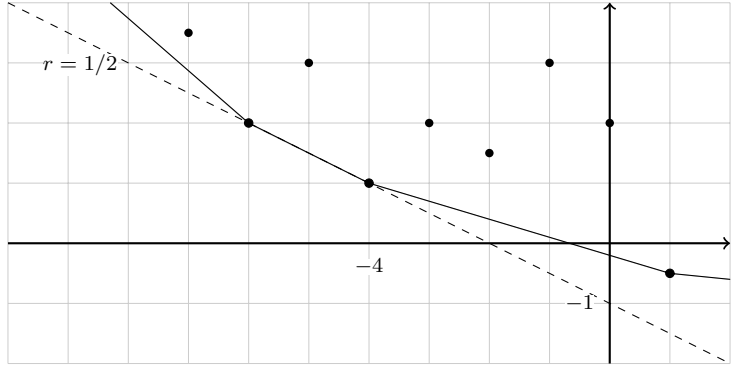

Recall that if $v$ is a tangent direction at $\xi_r$, then
$d_v F(\xi_r)$ denotes the slope of $F$ in the $v$ direction.

\begin{lemma}\label{lem:slope.from.np}
  Let $\xi_r = \sigma(r)\in\Sigma(\bS(\varpi)_+)$, and let $v$ be the tangent
  direction at $\xi_r$ defined by the line segment $\sigma((0,r])$.  Let
  $f(T) = \sum a_n T^n$ be an analytic function on $\bS(\varpi)_+$, and let
  $F = -\log|f|$.  Then
  \[ d_v F(\xi_r) = -\max\big\{ n~:~ \val(a_n) + nr = F(\xi_r) \big\}. \]
\end{lemma}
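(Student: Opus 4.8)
The plan is to reduce the statement to an elementary fact about the concave piecewise-affine function $g\colon(0,a)\to\R$ defined by $g(r):=F(\xi_r)$. By \eqref{eq:F.of.xir} we have $g(r)=\min\{\val(a_n)+nr:a_n\neq 0\}$, a pointwise infimum of the affine functions $\ell_n(r):=\val(a_n)+nr$, hence concave. Since $\sigma\colon(0,a)\to\Sigma(\bS(\varpi)_+)$ is an isometry onto the skeleton, and the tangent direction $v$ at $\xi_r$ is the one pointing along $\sigma((0,r])$ — that is, toward decreasing parameter — the point at distance $t$ from $\xi_r$ in the direction $v$ is $\xi_{r-t}$, so by definition $d_vF(\xi_r)=\lim_{t\to 0^+}\bigl(g(r-t)-g(r)\bigr)/t=-g'(r^-)$, the negative of the left derivative of $g$ at $r$. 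Thus it suffices to prove that $g'(r^-)=\max\{n:\ell_n(r)=g(r)\}$.

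\textbf{Cutting down to finitely many terms.} To make sense of the left derivative I would first argue that $g$ is genuinely piecewise affine near $r$, despite being an infimum of infinitely many lines. Fix $\delta_0>0$ with $[r-\delta_0,r+\delta_0]\subset(0,a)$. For $n\geq 0$ and $s\geq r-\delta_0$ one has $\ell_n(s)\geq\ell_n(r-\delta_0)$, and for $n\leq 0$ and $s\leq r+\delta_0$ one has $\ell_n(s)\geq\ell_n(r+\delta_0)$; since $\ell_n(r-\delta_0)\to\infty$ as $n\to+\infty$ and $\ell_n(r+\delta_0)\to\infty$ as $n\to-\infty$ by the growth condition on $\sum a_nT^n$ defining convergence on the open annulus, only finitely many indices $n$ satisfy $\ell_n(s)\leq g(r)+1$ for some $s\in[r-\delta_0,r+\delta_0]$. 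Call this finite set $S'$. On a small enough neighborhood of $r$ we then have $g(s)=\min_{n\in S'}\ell_n(s)$, a minimum of finitely many affine functions; in particular $g$ is piecewise affine near $r$ and its one-sided derivatives exist. I expect this reduction to be the only mildly delicate point; everything else is bookkeeping.

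\textbf{Identifying the left derivative.} Let $S=\{n:\ell_n(r)=g(r)\}\subseteq S'$; this is finite and nonempty since the minimum in \eqref{eq:F.of.xir} is attained. For $s<r$ close to $r$, each $\ell_n$ with $n\in S'\setminus S$ satisfies $\ell_n(s)>g(s)$ (it satisfies $\ell_n(r)>g(r)$ strictly, and there are finitely many such $n$), so $g(s)=\min_{n\in S}\ell_n(s)$. Writing $s=r-t$ with $t>0$ small, we have $\ell_n(r-t)=g(r)-nt$ for $n\in S$, so the minimum over $n\in S$ is achieved at the \emph{largest} $n\in S$; hence $g(r-t)=g(r)-n_{\max}t$ for $t$ small, with $n_{\max}=\max S$, giving $g'(r^-)=n_{\max}$. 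Therefore
\[
d_vF(\xi_r)=-g'(r^-)=-\max\{n:\val(a_n)+nr=F(\xi_r)\},
\]
as claimed. This is precisely the Newton-polygon picture of Figure~\ref{fig:newton.poly.annulus}: the line $y=-rx+F(\xi_r)$ is the supporting line of slope $-r$ to the lower convex hull of the points $(n,\val(a_n))$, and $-d_vF(\xi_r)$ is the abscissa of its rightmost point of contact.
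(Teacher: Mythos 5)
Your proof is correct and follows essentially the same route as the paper's: both arguments come down to observing that for $s$ slightly less than $r$ the minimum $\min_n(\val(a_n)+ns)$ is achieved uniquely at $N=\max\{n:\val(a_n)+nr=F(\xi_r)\}$, and then reading off the slope. The only difference is packaging --- the paper converts the dominance of $a_NT^N$ on a subannulus into the slope statement by citing the structure theory of units on annuli (Proposition~2.5(1) of Baker--Payne--Rabinoff), whereas you differentiate the concave piecewise-affine function $s\mapsto F(\xi_s)$ directly, spelling out the finiteness reduction that the paper leaves implicit.
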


\begin{proof} Let $N = \max\{n~:~\val(a_n) + nr = F(\xi_r)\}$.  There exists a small
$\epsilon$ such that $\val(a_N) + Ns < \val(a_n) + ns$ for all $n\neq N$ and all
$s\in(r-\epsilon, r)$.  It follows that the restriction of $f$ to the
subannulus
\[ A = \big\{ \eta\in\bS(\varpi)_+~:~-\log|T(\eta)|\in(r-\epsilon,r) \big\} \]
is invertible, with $|f(\eta)| = |a_N\eta^N|$ for all $\eta\in A$.  Therefore
the slope of $-\log|f|$ along $\sigma((r-\epsilon,r))$ (in the \emph{positive}
direction) is equal to $N$
by~\cite[Proposition~2.5(1)]{baker_payne_rabinoff13:analytic_curves}.
\end{proof}

All of our bounds will be stated in terms of the following function $N_p(r,N_0)$.

\begin{definition}\label{def:Np}
  Let $r$ be a positive real number, let $N_0$ be an integer, and let $p$ be a
  prime.  Define $N_p(r,N_0)$ to be the smallest positive integer $N$ such that
  for all $n\geq N$, one has
  \begin{equation}\label{eq:one.annulus.bound}
    r(n - N_0) > \lfloor\log_p(n)\rfloor.
  \end{equation}
\end{definition}

\begin{remark}
\label{R:bounds-on-NpAB}
  The integer $N_p(r,N_0)$ gets larger as $N_0$ increases and as $r$
  decreases, and it gets smaller as $p$ increases.  Clearly
  \[ \lim_{s\nearrow r} N_p(s,\,N_0) = N_p(r,\,N_0). \]
  If $N_0\geq 0$ and $p \geq N_0 + 2$, then $N_p(r,N_0) = N_0 + 1$ because
  $\lfloor\log_p(N_0+1)\rfloor = 0 < r$.  One should think of $N_p(r,N_0)-N_0$
  as the correction to the $p$-adic Rolle theorem coming from the fact that
  $1/p$ has negative valuation (see Corollary~\ref{cor:disc.bound} and see also~\cite[Section~6]{stoll06:rational_points} for a more sophisticated approach to
  the same problem). (Stoll's correction factor $\delta(\scdot,\scdot)$ is
  slightly better, but ours is easier to define.)
 
  We give an explicit upper bound on $N_p(r,N_0)$ as follows.  If $N_0\leq 0$, we
  take $N = 1$.  Otherwise, write $N=N_0\exp(u)$ for $u>0$.  We want
  $N-N_0>\frac{1}{r \ln(p)}\ln(N)$; that is,
\[\exp(u)-1>\frac{\ln(N)}{N_0r\ln(p)}=\frac{\ln(N_0)}{N_0r\ln(p)}+\frac{u}{N_0r\ln(p)}.\]
  Writing $\exp(u)-1>u+\frac{u^2}{2}$ and using $u$ and $u^2$ to bound each term on the right, it suffices to pick
  \[u \geq \max\left(\frac{\ln(N_0)}{N_0r\ln(p)},\,\frac{2}{N_0r\ln(p)}\right).\]
  For $N_0\leq 7$, this gives 
  \[N_p(r,N_0)\leq \lceil N_0\exp(u) \rceil=\left\lceil N_0\exp\left(\frac{2}{N_0r\ln(p)}\right)\right\rceil,\]
  while for $N_0\geq 8$, we have
  \[N_p(r,N_0)\leq \lceil N_0\exp(u) \rceil= \left\lceil
    N_0^{1+1/(N_0r\ln(p))}\right\rceil.\]
  If we suppose that $r\ln(p)\geq 1$, then one checks case by case that
  \begin{equation}\label{eq:nice.NpAB.bound}
    N_p(r,N_0) \leq 2N_0
  \end{equation}
  for all $N_0\geq 1$.
\end{remark}

In the statement of the next proposition we will use the following notation (see Figure~\ref{fig:annulus.setup}):

\begin{longtable}[l]{rl}
  \hspace{1em} $X$ & A smooth, proper, connected $\C_p$-curve. \\
  $\fX$ & A semistable $\sO_{\C_p}$-model of $X$. \\
  $\Gamma$ & $= \Gamma_\fX\subset X^\an$, a skeleton of $X$ in the sense of
  Section~\ref{sec:skeletons}. \\
  $\bar e$ & $\subset\Gamma$, a closed interval with type-$2$ endpoints. \\
  $\zeta_\pm$ & The endpoints of $\bar e$. \\
  $v_\pm$ & The tangent direction at $\zeta_\pm$ in the direction of $e$. \\
  $e$ & $= \bar e\setminus\{\zeta_\pm\}$, the open interval inside $e$. \\
  $A$ & $= \tau^{-1}(e) \cong \bS(\varpi)_+$, an open annulus. \\
  $a$ & $= \val(\varpi)$, the logarithmic modulus of $A$.
\end{longtable}

\vskip -.13in
\noindent We choose an identification $A\cong\bS(\varpi)_+$ such that
$\xi_r\to\zeta_-$ as $r\to 0$, so $\xi_r\to\zeta_+$ as $r\to a$.
If $e$ is a loop edge, then $\zeta_+ = \zeta_-$, and we define $v_\pm$ to be the
two tangent directions at $\zeta_\pm$ in the direction of $e$.
In what
follows we use the formal metric $\|\scdot\|$ on $\Omega^1_{X/\C_p}$ induced by
the sheaf of integral Rosenlicht differentials on $\fX$, as in
Section~\ref{sec:rosenlicht}.

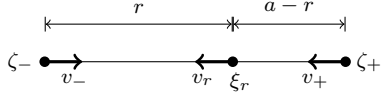
\begin{figure}[ht]
  \centering
  \begin{tikzpicture}[every circle/.style={fill=black, radius=.7mm},
    every node/.style={font=\tiny}]
    \coordinate (ZM) at (0,0);
    \coordinate (ZP) at (4,0);
    \coordinate (R) at (2.5, 0);
    \draw (ZM) node[left] {$\zeta_-$} -- (ZP) node[right] {$\zeta_+$};
    \draw[->, very thick] (ZM) -- ++(.5cm,0)
          node [pos=0.8, below] {$v_-$};
    \draw[->, very thick] (ZP) -- ++(-.5cm,0)
          node [pos=0.8, below] {$v_+$};
    \fill (ZM) circle[] (ZP) circle[] (R) circle;
    \path (R) ++(1mm,0) node [below] {$\xi_r$};
    \draw[->, very thick] (R) -- ++(-.5cm,0) node[pos=.8, below] {$v_r$};
    \draw[|<->|, thin] ($(ZM)+(0,5mm)$) -- ($(R)+(0,5mm)$) node[pos=.5, above] {$r$};
    \draw[|<->|, thin] ($(R)+(0,5mm)$) -- ($(ZP)+(0,5mm)$) node[pos=.5, above] {$a-r$};
  \end{tikzpicture}
  
  \caption{Illustration of the notation used in
    Proposition~\ref{p:annularbounds}.  The interval represents the edge $e$,
    which has length $a$.}
  \label{fig:annulus.setup}
\end{figure}

\begin{proposition}\label{p:annularbounds} 
  With the above notation, let $\omega\in H^0(X,\Omega^1_{X/\C_p})$ be a nonzero
  global differential, and suppose that $\omega$ is~\emph{exact} on $A$, so
  $\omega = df$ for an analytic function $f$ on $A$.  Let $F = -\log|f|$ and
  $F_0 = -\log\|\omega\|$, and let $N_0 = d_{v_+}F_0(\zeta_+)$.  Choose
  $r\in(0,a)$, and let $v_r$ be the tangent direction at $\xi_r$ in the direction
  of $\zeta_-$.
  Then $d_{v_r} F(\xi_{r})\leq N_p(a-r,N_0)$.
\end{proposition}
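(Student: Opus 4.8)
The plan is to translate the statement into Newton polygons on the edge $e$ and to compare the Newton polygon of the antiderivative $f$ at $\xi_r$ with that of the local equation of $\omega$ at the end $\zeta_+$.

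First I would fix the chosen parameter $T$ on $A\cong\bS(\varpi)_+$ and write $f=\sum_n a_nT^n$, so that $\omega=df=\bigl(\sum_n n a_nT^n\bigr)\frac{dT}{T}$; since $X$ is connected and $A$ is open, $\omega$ is nonzero on $A$, so $na_n\neq 0$ for some $n$. By Lemma~\ref{lem:nonv.sec.annulus}, $\|\omega\|=\bigl|\sum_n n a_nT^n\bigr|$ on $A$, and then~\eqref{eq:F.of.xir} gives, for $r\in(0,a)$,
\[ F(\xi_r)=\min_{a_n\neq 0}\{\val(a_n)+nr\},\qquad F_0(\xi_r)=\min_{na_n\neq 0}\{\val(n)+\val(a_n)+nr\}, \]
where $\val(n)$ denotes the $p$-adic valuation of the integer $n$. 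Lemma~\ref{lem:slope.from.np} then identifies $d_{v_r}F(\xi_r)=-M$, with $M=\max\{n:\val(a_n)+nr=F(\xi_r)\}$. For the end $\zeta_+$, Theorem~\ref{thm:slope.formula.2} tells us that $F_0|_\Gamma$ is a tropical meromorphic function, hence piecewise affine with finitely many pieces along $e$; letting $M'$ be the index governing its last affine piece near $\zeta_+$ (the one achieving the minimum defining $F_0(\xi_r)$ for $r$ slightly less than $a$), one gets $N_0=d_{v_+}F_0(\zeta_+)=-M'$, $F_0(\zeta_+)=\val(M')+\val(a_{M'})+M'a$, and $\val(M')+\val(a_{M'})+M'a\le\val(n)+\val(a_n)+na$ for every $n$ with $na_n\neq 0$ — this is the ``boundary'' analogue of Lemma~\ref{lem:slope.from.np}, obtained by passing to the limit $r\to a^-$, and it is the one point one must argue with care.

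If $d_{v_r}F(\xi_r)\le 0$ the claim is immediate since $N_p(a-r,N_0)\ge 1$, so I may set $n_0:=d_{v_r}F(\xi_r)=-M\ge 1$. Bounding $F_0(\zeta_+)$ from above by plugging the index $M$ into the minimum (and using $\val(a_M)=F(\xi_r)-Mr$), and from below via $M'$ (using $\val(M')\ge 0$ and $\val(a_{M'})+M'r\ge F(\xi_r)$), yields
\[ F(\xi_r)+M'(a-r)\ \le\ F_0(\zeta_+)\ \le\ \val(M)+F(\xi_r)+M(a-r). \]
Cancelling $F(\xi_r)$ gives $(M'-M)(a-r)\le\val(M)$; since $M'-M=n_0-N_0$ and $\val(M)=\val(n_0)\le\lfloor\log_p n_0\rfloor$ (because $p^{\val(n_0)}\mid n_0$ with $n_0\ge 1$), this is precisely $(a-r)(n_0-N_0)\le\lfloor\log_p n_0\rfloor$. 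By Definition~\ref{def:Np}, an integer $n_0$ satisfying this inequality must be $<N_p(a-r,N_0)$, so $d_{v_r}F(\xi_r)=n_0\le N_p(a-r,N_0)$, as claimed.

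The genuinely delicate step, as flagged above, is the computation of $N_0$ and $F_0(\zeta_+)$ from the Newton polygon of $\sum_n n a_nT^n$: the point $\zeta_+$ lies on the boundary of the annulus $A$ rather than in its interior, so Lemma~\ref{lem:slope.from.np} does not apply verbatim there. This is what forces the hypothesis that $\omega$ be a \emph{global} regular $1$-form: then $F_0=-\log\|\omega\|$ is finite on all of $X^\an$ and restricts to a tropical meromorphic — hence continuous, piecewise affine, with finitely many pieces — function on $\Gamma$, so $F_0$ is affine along $e$ in a neighborhood of $\zeta_+$ and its value and incoming slope there are the limits as $r\to a^-$ of the interior formulas. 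Everything else is bookkeeping with the orientation conventions and the elementary inequality $\val(n)\le\lfloor\log_p n\rfloor$ for $n\ge 1$.
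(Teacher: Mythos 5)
Your proof is correct and follows essentially the same route as the paper's: expand $f$ and $\omega$ in Laurent series, apply Lemma~\ref{lem:slope.from.np} together with the limit $r\to a^-$ (justified, as you note, by the piecewise-affine structure of $-\log\|\omega\|$ on the skeleton, where the paper instead invokes finiteness of $\div(\omega|_A)$), and compare valuations using $\val(n)\le\lfloor\log_p n\rfloor$. Your packaging of the key estimate as a two-sided bound on $F_0(\zeta_+)$ at the extremal indices $M$ and $M'$ is just a contrapositive reorganization of the paper's chain of inequalities over all $n<0$, and both yield $(a-r)(n_0-N_0)\le\lfloor\log_p(n_0)\rfloor$, hence $n_0<N_p(a-r,N_0)$.
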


\begin{proof}
  Let $T\colon A\isomto\bS(\varpi)_+$ be the identification we chose above, so
  $\xi\to\zeta_-$ as $-\log|T(\xi)|\to 0$.  The
  restriction of $\omega$ to $A$ has an infinite-tailed Laurent
  series expansion of the form
  \[ \omega = \sum_{n\in\Z} a_n T^n\,\frac{dT}T. \]
  By Lemmas~\ref{lem:nonv.sec.annulus} and~\ref{lem:indep.of.skel}, 
  for $\xi\in A$ one has
  $F_0(\xi) = -\log\|\omega(\xi)\| = -\log|\sum a_n\xi^n|$.  
  Using~\eqref{eq:F.of.xir} and taking the limit as $r\to a$, we obtain
\begin{equation}\label{eq:log.differential.norm}
\begin{split}
  F_0(\zeta_+) = -\log\|\omega(\zeta_+)\| &= \inf\{\val(a_n) + na~:~a_n\neq 0\}.
\end{split}
\end{equation}
In particular, the right-hand side of this equation is finite.  Since $\omega$
has finitely many zeros on $A$, the Newton polygon $\mathcal{N}$ of
$\sum a_n T^n$ has finitely many segments with slope in $(-a, 0)$.  Therefore
the infimum in~\eqref{eq:log.differential.norm} is achieved, and
$d_{v_+} F_0(\zeta_+) = d_{v_{r'}} F_0(\xi_{r'})$ for $r' < a$ very close to
$a$, where $\xi_{r'}$ and $v_{r'}$ are defined as in the statement of the
proposition.  From this and Lemma~\ref{lem:slope.from.np}, as applied to
$\sum a_n T^n$ and $\xi_{r'}$ with $r'\to a$, one sees that
\begin{equation}\label{eq:calc.N0}
  N_0 = -\max\{n~:~ \val(a_n) + na = F_0(\zeta_+) \}.
\end{equation}

Since $df = \omega$, we have
\[ f = \sum_{n\in\Z} b_n T^n = b_0 + \sum_{n\neq 0} \frac{a_n}n T^n \]
on $A$, where $b_n = a_n/n$ for $n\neq 0$ and $b_0\in\C_p$ is some constant.
According to Lemma~\ref{lem:slope.from.np}, 
\begin{equation}\label{eq:calc.dvF}
  d_{v_r} F(\xi_{r}) = -\max\big\{ n~:~ \val(b_n) + nr = F(\xi_{r}) \big\},
\end{equation}
where
\begin{equation}\label{eq:calc.Fxir}
  F(\xi_r) = \min\{\val(b_n) + nr ~:~ b_n\neq 0\}.
\end{equation}

The number $N \coloneq N_p(a-r, N_0)$ is positive, so if $d_{v_r} F(\xi_r)\leq 0$, then we
are done.  Hence, we may assume $d_{v_r} F(\xi_r) > 0$, so that
$\val(b_n) + nr = F(\xi_r)$ implies $n < 0$.  Note that we are in a situation
where the constant $b_0$ plays no role.  For $n < 0$ such that $a_n\neq 0$, we
have
\[\begin{split}
  \val(b_n) + nr &= \val(a_n) + na - \val(n) - n(a-r) \\
  &\geq \val(a_{-N_0}) - N_0a -\val(n) - n(a-r) \\
  &= \val(a_{-N_0}) - N_0r - \val(n) - (n+N_0)(a-r) \\
  &\geq \val(a_{-N_0}) - \val(-N_0) - N_0r - \val(n) - (n+N_0)(a-r) \\
  &= \val(b_{-N_0}) - N_0r - \val(n) + (-n-N_0)(a-r) \\
  &\geq F(\xi_r) - \lfloor\log_p(-n)\rfloor + (-n-N_0)(a-r).
\end{split}\]
Here we have used~\eqref{eq:log.differential.norm} and~\eqref{eq:calc.N0} in the
first inequality, and~\eqref{eq:calc.dvF} and~\eqref{eq:calc.Fxir} in the last
(along with $\val(n)\leq\lfloor\log_p(-n)\rfloor$).  It follows that
when~\eqref{eq:one.annulus.bound} is satisfied, then
$\val(b_n) + nr > F(\xi_r)$ for $n\leq -N$, so that $N \geq d_{v_r} F(\xi_r)$.
\end{proof}

We would like to apply Proposition~\ref{p:annularbounds} to arbitrary open
annuli embedded in $X^\an$.  For this we need the following lemma.

\begin{lemma}\label{lem:skel.from.annulus}
  Let $U\subset X^\an$ be an open subdomain isomorphic to an open annulus
  $\bS(\varpi)_+$.  Then there exists a skeleton $\Gamma$ of $X$ and an open
  edge $e$ of $\Gamma$ such that $\tau^{-1}(e) = U$.
\end{lemma}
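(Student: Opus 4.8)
\emph{Plan.} The plan is to realize $U$ as one of the annular connected components of $X^{\an}\setminus V$ for a well-chosen semistable vertex set $V$ of $X$, and then to invoke the dictionary between semistable vertex sets (equivalently, semistable models) and skeleta, as in~\cite[Theorem~4.11]{baker_payne_rabinoff13:analytic_curves} and Section~\ref{sec:skeletons}. Write $U\cong\bS(\varpi)_+$; its skeleton $\Sigma(U)$ is an open segment of length $\val(\varpi)$, and the set $\overline U\setminus U$ of ends of $U$ inside $X^{\an}$ consists of one or two type-$2$ points $\zeta_-,\zeta_+$ of $X$ (they coincide exactly when $U$ ``wraps around''). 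Moreover $\partial U=\overline U\setminus U=\{\zeta_-,\zeta_+\}$, since any point of $\overline U$ other than $\zeta_\pm$ has a neighborhood already contained in $U$. I claim it then suffices to produce a semistable vertex set $V$ of $X$ with $\{\zeta_-,\zeta_+\}\subseteq V$ and $V\cap U=\emptyset$: in that case $U$ is open, disjoint from $V$, connected, and has topological boundary contained in $V$, so $U$ is clopen in $X^{\an}\setminus V$ and hence one of its connected components; as $V$ is a semistable vertex set this component is an open ball or an open annulus, and being an annulus it equals $\tau^{-1}(e)$ for the open edge $e$ of $\Gamma:=\Sigma(X,V)$ corresponding to it, where $\tau\colon X^{\an}\to\Gamma$ is the retraction --- which is the desired conclusion.

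\emph{Construction of $V$.} First I would invoke semistable reduction (equivalently~\cite[Theorem~4.11]{baker_payne_rabinoff13:analytic_curves}, using that $\C_p$ is algebraically closed) to get \emph{some} semistable vertex set $V_0$ of $X$. Enlarging $V_0$ by the (one or two) type-$2$ points $\zeta_\pm$ again yields a semistable vertex set: removing a type-$2$ point from an open ball or an open annulus leaves a disjoint union of open balls and at most two open annuli, so the complement of the enlarged set is still a disjoint union of open balls and finitely many open annuli. Thus we may assume $\{\zeta_-,\zeta_+\}\subseteq V_0$. Now set $V:=V_0\setminus U$. Since $\zeta_\pm\notin U$ we still have $\{\zeta_-,\zeta_+\}\subseteq V$, and $V\cap U=\emptyset$ by construction, so it only remains to verify that $V$ is a semistable vertex set.

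\emph{The key verification.} For any connected component $C$ of $X^{\an}\setminus V_0$, the set $C\cap U$ is open in $C$, and its boundary relative to $C$ is contained in $\partial U=\{\zeta_\pm\}\subseteq V_0$, hence is empty; so $C\cap U$ is clopen in the connected set $C$, i.e.\ either $C\subseteq U$ or $C\cap U=\emptyset$. Since $X^{\an}\setminus V=(X^{\an}\setminus V_0)\cup(V_0\cap U)$ and $V_0\cap U\subseteq U$, it follows that $U$ itself is one connected component of $X^{\an}\setminus V$ (it absorbs exactly the components $C\subseteq U$ and the reinstated points $V_0\cap U$, and its boundary $\{\zeta_\pm\}$ lies in $V$), while the remaining components of $X^{\an}\setminus V$ are precisely the components $C$ of $X^{\an}\setminus V_0$ with $C\cap U=\emptyset$, unchanged because their boundaries still lie entirely in $V$. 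Each of these is an open ball or open annulus as $V_0$ is a semistable vertex set, $U$ is an open annulus, and there are only finitely many annuli in all; hence $V$ is a semistable vertex set, and the proof is complete.

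\emph{Main obstacle.} There is no deep difficulty here; the one point needing care is the identification $\overline U\setminus U=\{\zeta_-,\zeta_+\}$ with $\zeta_\pm$ of type $2$, on which the whole ``excision'' argument rests --- I would justify it from the structure theory of annuli embedded in analytic curves (Section~\ref{sec:skeletons} and~\cite{baker_payne_rabinoff13:analytic_curves}). One also uses the routine fact that enlarging a semistable vertex set by finitely many type-$2$ points preserves semistability. Everything else is soft point-set topology combined with the vertex-set/skeleton correspondence.
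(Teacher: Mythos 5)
Your proposal is correct and follows essentially the same route as the paper: the paper also takes a semistable vertex set $V$ containing the two boundary points of $U$ and then deletes $V\cap U$, observing that the complement of $V\setminus U$ is again a disjoint union of open balls and annuli with $U$ as one of its components. The one step you defer to ``structure theory'' --- that $\overline U\setminus U$ consists of (at most two) type-$2$ points --- is exactly where the paper does its only real work, via a surgery argument: excise a closed subannulus of $U$, cap the two remaining open annuli with open discs to form a new curve, and apply the disc case \cite[Lemma~3.3]{abbr14:lifting_harmonic_morphism_I} to identify the ends as type-$2$ points.
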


\begin{proof}
  First we recall that if $U'\subset X^\an$ is an open subdomain isomorphic to
  the open disc $\bB(1)_+$, then the closure of $U'$ is $U'\djunion\{x\}$ for a
  type-$2$ point $x\in X^\an$
  by~\cite[Lemma~3.3]{abbr14:lifting_harmonic_morphism_I}.

  By~\cite[Lemma~3.6]{abbr14:lifting_harmonic_morphism_I}, the closure of $U$ in
  $X^\an$ is $U\djunion\{x,y\}$, where $x,y\in X^\an$ are points which are not
  necessarily distinct.  We claim that $x,y$ have type $2$.  This claim reduces
  to the case of a disc by doing surgery on $X^\an$, as in the proof
  of~\cite[Lemma~4.12(2)]{baker_payne_rabinoff13:analytic_curves}.  Briefly, one
  excises a closed subannulus from $U$, then caps the ends of the remaining two
  open annuli by open discs.  One obtains a new open set
  $U'\cong\bB(1)_+\djunion\bB(1)_+$ in a new curve $X'^\an$, with $x,y$ identified
  with the points in the closures of the two open discs.

  Let $V$ be a semistable vertex set of $X$ containing $x$ and $y$.  Such exists
  by~\cite[Proposition~3.13(3)]{baker_payne_rabinoff13:analytic_curves}.  Let
  $V' = V\setminus U$.  Then $X^\an\setminus V'$ is again a disjoint union of
  open discs and open annuli, one of which is $U$, so $V'$ is a semistable
  vertex set.  The corresponding skeleton $\Gamma$ has an open edge
  $e\coloneq\Gamma\cap U$ satisfying the conditions of the lemma.
\end{proof}

As an immediate consequence we recover a general version of the standard
Chabauty--Coleman bound for zeros of an antiderivative on an open disc, as found
(in a slightly stronger version) in~\cite[Proposition~6.3]{stoll06:rational_points}.

\begin{corollary}\label{cor:disc.bound}
  Let $B\subset X^\an$ be an open subset isomorphic to the open unit disc
  $\bB(1)_+$, and choose an isomorphism $T\colon B\to\bB(1)_+$.  Let
  $\omega\in H^0(X,\Omega^1_{X/\C_p})$ be a nonzero global differential, and let
  $N_0$ be the number of zeros of $\omega$ on $B$.  Then
  $\omega = df$ for an analytic function $f$ on $B$, and for any $r > 0$, $f$
  has at most $N_p(r,N_0+1)$ zeros on the subdisc
  $B_r \coloneq \{\eta\in B~:~-\log|T(\eta)| > r\}$.
\end{corollary}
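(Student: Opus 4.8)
The plan is to reduce the statement to Proposition~\ref{p:annularbounds} by cutting a standard open annulus out of $B$ and keeping careful track of orientations.

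Since $B\cong\bB(1)_+$ is an open disc, the Poincar\'e lemma (the discussion after Definition~\ref{d:it}) furnishes an analytic $f$ on $B$ with $df=\omega$; set $F=-\log|f|$, so that it remains to bound the number of zeros of $f$ on $B_r$. Fix $r>0$ and pick any real $a>r$. Let $A=\{\eta\in B:|T(\eta)|>e^{-a}\}$; through the coordinate $T$ this is isomorphic to the standard open annulus $\bS(\varpi)_+$ with $\val(\varpi)=a$, and it is an open subdomain of $X^\an$, so Lemma~\ref{lem:skel.from.annulus} provides a semistable model $\fX$ of $X$ whose skeleton $\Gamma=\Gamma_\fX$ has an open edge $e$ with $\tau^{-1}(e)=A$. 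Let $\zeta_-,\zeta_+$ be the endpoints of $\overline e$, labelled so that $\zeta_-$ is the endpoint toward $|T|=e^{-a}$ and $\zeta_+$ the endpoint toward $|T|=1$ (the Gauss point of $B$). With this labelling, the normalization of the identification $A\cong\bS(\varpi)_+$ demanded in Proposition~\ref{p:annularbounds} forces the annulus parameter to be the reciprocal $\varpi/T$ of the disc coordinate up to a unit; consequently the proposition's point $\xi_\rho$ is the point $\zeta_{a-\rho}\in e$ with $-\log|T(\zeta_{a-\rho})|=a-\rho$, and its tangent direction $v_\rho$ (toward $\zeta_-$) points toward $|T|\to 0$, i.e.\ into the subdiscs.

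The only quantity to compute is the proposition's number $N_0':=d_{v_+}F_0(\zeta_+)$, where $F_0=-\log\|\omega\|$ for the integral Rosenlicht metric of $\fX$. Writing $\omega=g(T)\,dT$ on $B$ with $g=\sum_{m\ge0}a_mT^m$ a power series, we have $\omega=(Tg(T))\frac{dT}{T}$ on $A$, so Lemma~\ref{lem:nonv.sec.annulus} gives $F_0=-\log|Tg(T)|$ on $A$. Since $\omega$ has only finitely many zeros, $F_0$ is piecewise affine near $\zeta_+$, and a one-line Newton polygon computation at the $|T|=1$ end (the extra factor $T$ producing the shift by one) shows $N_0'=N_0+1$, where $N_0$ is the number of zeros of $g$—equivalently, of $\omega$—on $B$. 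Applying Proposition~\ref{p:annularbounds} at the point $\xi_{a-r}$ (legitimate since $a-r\in(0,a)$) then yields
\[
d_{v_{a-r}}F(\xi_{a-r})\ \le\ N_p\big(a-(a-r),\,N_0'\big)\ =\ N_p(r,\,N_0+1).
\]

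Finally, $\xi_{a-r}=\zeta_r$ is the Shilov boundary point of the closed subdisc $\{|T|\le e^{-r}\}$, and $v_{a-r}$ is the tangent direction at $\zeta_r$ pointing into $B_r$; the slope $d_{v_{a-r}}F(\zeta_r)$ is then exactly the number of zeros of $f$ in $B_r$, a standard Newton polygon count (cf.\ the computation in the proof of Proposition~\ref{p:annularbounds}, or Lemma~\ref{lem:slope.from.np} combined with the slope formula, Theorem~\ref{thm:slope.formula}). Combining this with the displayed inequality proves the corollary. I expect the only real obstacle to be organizational: one must pin down which endpoint of $e$ is $\zeta_\pm$, which of the two tangent directions at $\zeta_r$ is the relevant one, and the reciprocal relation between the annulus parameter and $T$ that the normalization in Proposition~\ref{p:annularbounds} imposes; everything else is a direct quotation of Proposition~\ref{p:annularbounds}, Lemma~\ref{lem:skel.from.annulus} and Lemma~\ref{lem:nonv.sec.annulus}, together with a routine Newton polygon computation.
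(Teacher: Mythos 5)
Your proof is correct and takes essentially the same route as the paper's (much terser) argument: apply Proposition~\ref{p:annularbounds} to the outer collar annulus of logarithmic modulus $a>r$ inside $B$, with the shift from $N_0$ to $N_0+1$ coming from rewriting $g(T)\,dT$ as $Tg(T)\,dT/T$, and with the two Newton-polygon identifications (inward slope of $-\log\|\omega\|$ at the end $\zeta_+$, and inward slope of $-\log|f|$ at $\zeta_r$ counting zeros on $B_r$) that the paper leaves implicit. The only nitpick is that $a$ should be chosen in the value group $\Q=\val(\C_p^\times)$ rather than an arbitrary real, so that the collar is genuinely isomorphic to a standard annulus $\bS(\varpi)_+$; this costs nothing since the final bound $N_p(r,N_0+1)$ is independent of $a$.
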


\begin{proof}
  That $\omega$ is exact follows from the Poincar\'e lemma.
  Let $g$ be an analytic function on a disc $\bB(1)_+$ with finitely many zeros.
  By a classical Newton polygon argument, the number of zeros of $g$ on
  $\bB(1)_+$ is equal to the slope of $-\log|g|$ at the Gauss point $\zeta_r$ of
  the closed disc of radius $\exp(-r)$ for $r>0$ close to zero.  Hence, the
  corollary follows from Proposition~\ref{p:annularbounds} as applied to an
  annulus of logarithmic modulus $a > r$ contained in $B$, recalling that the
  slope of $\omega$ on an annulus is calculated with respect to $dT/T$.
\end{proof}

\subsection{Combinatorics of stable graphs}\label{sec:stable.graphs}
The minimal skeleton $\Gamma = \Gamma_{\min}$ (in the sense
of Section~\ref{sec:skeletons}) of a curve of genus $g \geq 2$ is the skeleton
associated to a stable model.  This implies that $\Gamma$ is a connected metric
graph, with vertices $x$ weighted by the genus $g(x)$, such that all vertices of
valency $\leq 2$ have positive weight.  Such a metric graph is called
\defi{stable}.

In this subsection we make some (undoubtedly well known) observations about the
combinatorics of stable vertex-weighted metric graphs $(\Gamma, g)$.  We extend
the weight $g$ to all points of $\Gamma$ by setting $g(x) = 0$ if $x$ is not a
vertex.  Likewise, we declare that the valency of a nonvertex $x\in\Gamma$ is
$\deg(x)\coloneq 2$.  The \defi{genus} of $\Gamma$ is defined via the genus
formula~\eqref{eq:genus.formula}: that is,
\[ g(\Gamma) \coloneq h_1(\Gamma) + \sum_{x\in\Gamma} g(x). \]
Recall~\eqref{eq:canon.div.graph} that the \defi{canonical divisor} on $\Gamma$ is 
\[ K_\Gamma \coloneq \sum_{x\in\Gamma} \big(2g(x) - 2 + \deg(x)\big)\,(x). \]
The degree of $K_\Gamma$ is $2g(\Gamma)-2$, and since $\Gamma$ is stable,
$K_\Gamma$ is \emph{effective} and has positive multiplicity on every vertex.

\begin{lemma}\label{lem:graph.combinatorics}
  Let $(\Gamma, g)$ be a stable vertex-weighted metric graph of genus
  $g(\Gamma)\geq 2$.
  \begin{enumerate}
  \item $\Gamma$ has at most $2g-2$ vertices.
  \item $\Gamma$ has at most $3g-3$ edges and at most $g$ loop edges.
  \item Every vertex of $\Gamma$ has valency at most $2g(\Gamma)$.
  \end{enumerate}
\end{lemma}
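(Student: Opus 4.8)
The plan is to deduce all three bounds from two facts recorded just above the lemma: the genus formula~\eqref{eq:genus.formula}, which in particular gives $h_1(\Gamma)\leq g(\Gamma)$ since the weights are nonnegative; and the fact that $K_\Gamma$ is effective of degree $2g(\Gamma)-2$ with multiplicity at least $1$ at every vertex, the latter being precisely where stability enters. Writing $\#V$ and $\#E$ for the numbers of vertices and edges of $\Gamma$, I would first note the elementary identities $\sum_{x}\deg(x)=2\#E$ (the handshake lemma, a loop contributing $2$ at its vertex) and $\#E-\#V=h_1(\Gamma)-1$ for a connected graph, which together with~\eqref{eq:genus.formula} re-derive $\deg(K_\Gamma)=\sum_{x}(2g(x)-2+\deg(x))=2\sum_{x}g(x)+2(h_1(\Gamma)-1)=2g(\Gamma)-2$.

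Part~(1) is then immediate: every vertex contributes at least $1$ to $\sum_{x}(2g(x)-2+\deg(x))=\deg(K_\Gamma)$ while nonvertices contribute $0$, so $\#V\leq\deg(K_\Gamma)=2g(\Gamma)-2$. Part~(3) is the same observation applied one vertex at a time: fixing a vertex $x$, the remaining summands of $\deg(K_\Gamma)$ are nonnegative, so $2g(x)-2+\deg(x)\leq 2g(\Gamma)-2$, whence $\deg(x)\leq 2g(\Gamma)-2g(x)\leq 2g(\Gamma)$. For the edge bound in part~(2), combine $h_1(\Gamma)\leq g(\Gamma)$ with part~(1): $\#E=h_1(\Gamma)+\#V-1\leq g(\Gamma)+(2g(\Gamma)-2)-1=3g(\Gamma)-3$. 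For the loop-edge bound, I would argue that deleting a loop edge leaves $\#V$ and the number of connected components unchanged while lowering $\#E$ by one, hence lowers $h_1$ by one; deleting all $\ell$ loop edges produces a graph with first Betti number $h_1(\Gamma)-\ell\geq 0$, so $\ell\leq h_1(\Gamma)\leq g(\Gamma)$.

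This is all routine bookkeeping, so I do not expect a real obstacle; the only things needing care are the conventions --- extending $g$ and $\deg$ to nonvertices so that $K_\Gamma$ and the above degree computation are unambiguous --- and ensuring that the stability hypothesis is used exactly where it is needed, namely for the inequality $2g(x)-2+\deg(x)\geq 1$ at every vertex, which is what powers parts~(1) and~(3).
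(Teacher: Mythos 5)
Your proof is correct and follows essentially the same route as the paper: both arguments rest on the effectivity and positivity at vertices of $K_\Gamma$ together with $\deg(K_\Gamma)=2g(\Gamma)-2$ for parts~(1) and~(3), and on $\#E=h_1(\Gamma)+\#V-1$ combined with $h_1(\Gamma)\leq g(\Gamma)$ for part~(2). The only difference is cosmetic --- you re-derive $\deg(K_\Gamma)=2g(\Gamma)-2$ and spell out the loop-edge count via deletion, where the paper simply cites these facts.
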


\begin{proof}
  As mentioned above, the canonical divisor $K_\Gamma$ has degree $2g(\Gamma)-2$
  and is effective, with positive multiplicity on vertices.  Since
  $2g(x) - 2 + \deg(x) = 0$ for $x$ not a vertex, $K_\Gamma$ is supported
  on the set of vertices.  This proves~(1).  Letting $V$ be the number of
  vertices of $\Gamma$ and $E$ be the number of edges, we have
  $h_1(\Gamma) = E - V + 1$, so
  \[ E = h_1(\Gamma) + V - 1 \leq g(\Gamma) + (2g(\Gamma) - 2) - 1 = 3g(\Gamma)
  - 3. \]
  Clearly a graph with more than $g$ loop edges has genus greater than $g$,
  so this proves~(2).  For~(3), note that
  \[ 2g(\Gamma) - 2 = \deg(K_\Gamma) = \sum\big(2g(x) - 2 + \deg(x)\big), \]
  where the sum is taken over all vertices.  Since each summand is positive, for
  a given vertex $x$, we have $2g(x) - 2 + \deg(x) \leq 2g(\Gamma) - 2$, so
  \[ \deg(x) \leq 2g(\Gamma) - 2g(x) \leq 2g(\Gamma). \]
\end{proof}

The following lemma does not require the weighted metric graph to be stable.
It plays the role of~\cite[Corollary~6.7]{Stoll:uniformity},
which is proved using an explicit calculation on hyperelliptic curves.

\begin{lemma}\label{lem:unstable.combinatorics}
  Let $(\Gamma, g)$ be a vertex-weighted metric graph of genus
  $g(\Gamma)$.  Let $F$ be a tropical meromorphic function on $\Gamma$
  such that $\div(F) + K_\Gamma \geq 0$.  Then for all $x\in\Gamma$ and all
  tangent directions $v$ at $x$, we have $|d_v F(x)| \leq 2g(\Gamma)-1$.
  If $K_\Gamma$ is effective, that is, if $\Gamma$ has no genus-zero leaves, then 
  we may replace $2g(\Gamma)-1$ by $2g(\Gamma)-2$.
\end{lemma}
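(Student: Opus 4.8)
The plan is to argue directly from the hypothesis $\div(F) + K_\Gamma \geq 0$. Fix $x \in \Gamma$ and a tangent direction $v$ at $x$; I want to bound $|d_v F(x)|$. The key structural observation is that a single large slope must be "paid for" by the canonical divisor, and the total degree of $K_\Gamma$ is $2g(\Gamma) - 2$, while the genus can only absorb so much before the graph's topology forces a contradiction. More precisely, I would reduce to bounding the incoming slope $d_w F(y)$ at each point $y$ in each tangent direction $w$, since $|d_v F(x)|$ is controlled by such quantities (if $d_v F(x) < 0$, the slope is incoming at $x$ from the $v$-side, handled directly; if $d_v F(x) > 0$, then it is an incoming slope at the next vertex along $v$, or one tracks $F$ along the ray).

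First I would recall that $\ord_y(F) = -\sum_{w \in T_y(\Gamma)} d_w F(y)$, so for any $y$, $\ord_y(F) + 2g(y) - 2 + \deg(y) \geq 0$, i.e.\ $\sum_w d_w F(y) \leq 2g(y) - 2 + \deg(y)$. This already bounds the sum of the \emph{outgoing} slopes at $y$ from above by $2g(y) - 2 + \deg(y) \leq \deg(K_\Gamma) = 2g(\Gamma) - 2$ when $K_\Gamma$ is effective (using Lemma~\ref{lem:graph.combinatorics}(3) or just that each summand of $\deg K_\Gamma$ is nonnegative), hence the sum of incoming slopes at $y$ is at least $-(2g(\Gamma)-2)$. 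To turn a bound on a \emph{sum} of slopes into a bound on an \emph{individual} slope, I would follow a slope-tracking argument: start at $x$ in direction $v$ and walk along the path where $F$ keeps increasing (or decreasing). Along such a path $-\div(F)$ is bounded below by $-K_\Gamma$, and the total variation of a single slope is constrained because $K_\Gamma$ has total degree $2g(\Gamma) - 2$; the slope can jump up only where $K_\Gamma$ has positive mass, and the walk cannot return to a visited vertex with a strictly larger slope without creating a cycle contribution that overshoots the genus. So the slope is bounded by $\deg(K_\Gamma) = 2g(\Gamma) - 2$ in the effective case.

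For the general case (when $\Gamma$ may have genus-zero leaves, so $K_\Gamma$ is not effective), the divisor $K_\Gamma$ can have coefficient $-1$ at a leaf, and walking toward such a leaf, $F$ must have a pole there to compensate, which adds $1$ to the available "budget," giving the bound $2g(\Gamma) - 1$. Concretely I would bound the outgoing slope at an interior vertex by $2g(\Gamma) - 2$ as before, and at a genus-zero leaf the single edge direction has incoming slope $\geq -1$ hence $\leq$ whatever $\div(F) + K_\Gamma \geq 0$ permits, which after the slope-tracking argument contributes the extra $+1$.

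\textbf{Main obstacle.} The subtle point is the reduction from "sum of slopes at a vertex is bounded" to "each individual slope is bounded": a priori one slope could be huge and positive while another is huge and negative at the same vertex, with the sum small. This is ruled out because an incoming slope being very negative forces $\ord_y(F)$ to be large positive, which is impossible once $2g(y) - 2 + \deg(y)$ is exhausted — but making this rigorous requires the slope-tracking/path argument above, carefully controlling how much $K_\Gamma$-mass is consumed and ensuring one does not double-count mass or create a spurious cycle. I expect this path argument, and in particular handling loop edges and the case where the walk could in principle revisit a vertex, to be the main technical work; everything else is bookkeeping with the identity $\ord_y(F) = -\sum_w d_w F(y)$ and the degree of $K_\Gamma$.
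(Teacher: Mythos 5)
There is a genuine gap, and you have correctly located it yourself: the passage from ``the sum of the slopes at a vertex is bounded'' to ``each individual slope is bounded'' is exactly where your sketch has no working mechanism. The path-tracking argument you propose does not close it. The constraint $\ord_y(F)\geq -m_y$ (with $m_y = 2g(y)-2+\deg(y)$) bounds only the \emph{sum} $\sum_w d_wF(y)$ of the outgoing slopes at $y$. If you walk into a vertex $y$ along an increasing path with slope $s$, the slope along the edge you continue on is bounded by $s+m_y$ \emph{plus the absolute values of whatever negative outgoing slopes occur in the side directions at $y$}, and those can be arbitrarily large while $\div(F)+K_\Gamma\geq 0$ still holds with equality at $y$ (e.g., at a trivalent $y$ one side direction can have slope $+100$ and the other $-100+s+m_y$). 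So the danger is not cycles or revisited vertices at all; it is mass flowing in ``from below'' along directions not on your path, and a one-dimensional walk cannot account for it.

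The missing idea is to replace the walk by a \emph{global sublevel-set} argument, which is what the paper does. Assume $d_vF(x)>0$ (else replace $v$ by $-v$), set $r=F(x)$, and let $\Gamma_{\leq r}=\{y:F(y)\leq r\}$. Then $x$ lies on the boundary of $\Gamma_{\leq r}$, and at every boundary point $x_i$ every tangent direction $v_{ij}$ pointing \emph{into} $\Gamma_{\leq r}$ satisfies $d_{v_{ij}}F(x_i)\leq 0$ — all boundary slopes have one sign, so there are no uncontrolled negative contributions. Since $\deg\div(F|_{\Gamma_{\leq r}})=0$, one gets
\[
 d_vF(x)\;\leq\;\sum_{i,j}\bigl(-d_{v_{ij}}F(x_i)\bigr)\;=\;-\sum_{y\in\Gamma_{<r}}\ord_y(F)\;\leq\;\sum_{y\in\Gamma_{<r}}m_y\;\leq\;\deg(K_\Gamma)=2g(\Gamma)-2
\]
in the effective case, where the last step uses $m_y\geq 0$ for $y\notin\Gamma_{<r}$. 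Your ingredients (the identity $\ord_y(F)=-\sum_w d_wF(y)$, the degree of $K_\Gamma$, and the $+1$ correction from a genus-zero leaf with $m_z=-1$) are the right ones, and your treatment of the non-effective case by pruning leaf edges — using that $\ord_z(F)\geq 1$ at a genus-zero leaf forces $F$ to decrease into the graph, so the pruned restriction still satisfies the hypothesis — is essentially the paper's; but without the sublevel-set step the core bound is not established.
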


\begin{proof}
  We may assume that $x$ is not a vertex and that $F$ is differentiable at $x$.
  First we assume that $\Gamma$ has no leaves of genus zero, so that $K_\Gamma$
  is effective.  If $F$ is constant in a neighborhood of $x$, then we are done,
  so assume that this is not the case.  Let $r = F(x)$, let
  $\Gamma_{\leq r} = \{y\in\Gamma~:~F(y)\leq r\}$, and define $\Gamma_{<r}$
  similarly.  Then $\Gamma_{\leq r}$ is a subgraph of $\Gamma$, $x$ is a leaf of
  $\Gamma_{\leq r}$, and the tangent direction $v$ at $x$ in which $F$ is
  increasing points away from $\Gamma_{\leq r}$.  Let $x_1,\ldots,x_n$ be the
  points on the boundary of $\Gamma_{\leq r}$ in $\Gamma$, and let $\{v_{ij}\}$
  be the tangent directions at $x_i$ in $\Gamma_{\leq r}$.  The degree of the
  tropical meromorphic function $F|_{\Gamma_{\leq r}}$ on the metric graph
  $\Gamma_{\leq r}$ is zero, so we have
  \[ 0 = \sum_{y\in\Gamma_{\leq r}} \ord_y(F) = -\sum d_{v_{ij}}F(x_i) +
  \sum_{y\in\Gamma_{<r}}\ord_y(F), \]
  since $\ord_{v_{ij}}(F)$ is the \emph{incoming} slope.
  As each $-d_{v_{ij}} F(x_i)$ is nonnegative, we have
  \[ d_vF(x) \leq -\sum d_{v_{ij}} F(x_i) = -\sum_{y\in\Gamma_{<r}}\ord_y(F). \]
  Let $m_y = 2g(y) - 2 + \deg(y)$, the multiplicity of $y$ in $K_\Gamma$.
  Then $\ord_y(F) + m_y\geq 0$, so $-\ord_y(F)\leq m_y$ and hence,
  \begin{equation*}
    -\sum_{y\in\Gamma_{<r}}\ord_y(F) \leq \sum_{y\in\Gamma_{<r}}m_y \leq
    2g(\Gamma)-2,
  \end{equation*}
  since $K_\Gamma$ has degree $2g(\Gamma)-2$, and $m_y\geq 0$ for
  $y\notin\Gamma_{<r}$.

  Now we drop the assumption that $\Gamma$ has no genus-zero leaves.  Let $z$ be
  such a leaf, let $y$ be the first vertex along the edge adjoining $z$, that is, 
  the first point along this edge with $m_y \neq 0$, and let $e$ be the line
  segment joining $y$ and $z$.  The lemma is easy to prove when $\Gamma = e$, so
  we assume this is not the case.  Since $m_z = -1$, the incoming slope of $F$
  at $z$ is at least $1$.  From this it follows that $F$ is monotonically
  increasing from $y$ to $z$, and in particular, that the incoming slope of $F$
  at $y$ is at most $-1$.  Letting $\Gamma' = (\Gamma\setminus e)\cup\{y\}$,
  this implies that $F|_{\Gamma'}$ is a tropical meromorphic function satisfying
  $\div(F|_{\Gamma'}) + K_{\Gamma'}\geq 0$.

  Let $x\in \Gamma$. By repeatedly removing genus-zero leaf edges not containing
  $x$, we may find a subgraph $\Gamma''\subseteq \Gamma$ containing $x$ with at
  most one genus-zero leaf edge $e$ (which then contains $x$ by construction) such that
  $F|_{\Gamma''}$ is a tropical meromorphic function satisfying
  $\div(F|_{\Gamma''}) + K_{\Gamma''}\geq 0$.  Note that $g(\Gamma'')=g(\Gamma)$
  and that $\Gamma'$ has at most one point $z$ with $m_z < 0$.  If there is no
  such point, then the conclusion follows from the special case
  above. Otherwise, we proceed as before. Because $m_z=-1$, we obtain
  \[ -\sum_{y\in(\Gamma'')_{<r}}\ord_y(F) \leq \sum_{y\in(\Gamma'')_{<r}}m_y \leq
  2g(\Gamma'')-2+1 = 2g(\Gamma)-1. \]
\end{proof}

\subsection{Bounding zeros on wide opens}
\label{sec:bounding.zeros}
Let $U$ be a basic wide open subdomain of $X^\an$ with central point
$\zeta$, underlying affinoid $Y$, and annuli $A_1,\ldots,A_d$, as
in Section~\ref{sec:basic.wide.open}.  Suppose that $U$ is defined with respect to a
star neighborhood in a skeleton $\Gamma$.

\begin{definition}\label{def:thickness}
  The \defi{thickness} of $U$ is $\min\{a_1,\ldots,a_d\}$, where $a_i$ is the
  logarithmic modulus of $A_i$.
\end{definition}

Let $a$ be the thickness of $U$.  For $r\in(0,a)\cap\Q$, we let $U_r$ denote the
basic wide open subdomain inside of $U$ obtained by deleting a half-open annulus
of logarithmic modulus $r$ from each $A_i$, as in
Figure~\ref{fig:wideopen.bound}.

\begin{figure}[ht]
  \centering
  \begin{tikzpicture}[scale=1.5,
    every circle node/.style={draw, inner sep=.4mm},
    every node/.style={font=\tiny},
    other edge/.style={black!30!white, very thin}]
    \node (Z) at (0,0) [circle, fill=black, label=above:$\zeta$] {};
    \draw (Z) ++(30:1.5cm) 
          node (Z1) [circle, label=above:$\zeta_1$] {};
    \draw[other edge] (Z1) -- ++(0:.7cm)
          (Z1) -- ++(135:.7cm);
    \draw (Z) ++(180:1.7cm) 
          node (Z2) [circle, label=above:$\zeta_2$] {};
    \draw[other edge] (Z2) -- ++(135:.7cm)
          (Z2) -- ++(180:.7cm)
          (Z2) -- ++(225:.7cm);
    \draw (Z) ++(270:1.2cm) 
          node (Z3) [circle, label=below:$\zeta_3$] {};
    \draw[other edge] (Z3) -- ++(200:.7cm)
          (Z3) -- ++(340:.7cm);
    \path (Z) edge (Z1) (Z) edge (Z2) (Z) edge (Z3);
    \draw[very thick, ->] (Z1) -- ($(Z1)!.3cm!(Z)$)
          node [anchor=-90] {$v_1$};
    \draw[very thick, ->] (Z2) -- ($(Z2)!.3cm!(Z)$)
          node [anchor=70] {$v_2$};
    \draw[very thick, ->] (Z3) -- ($(Z3)!.3cm!(Z)$)
          node [anchor=150] {$v_3$};
    \node[circle, fill=white, label={[inner sep=0pt]135:$\zeta_1'$}] 
          (ZP1) at ($(Z1)!.6cm!(Z)$) {};
    \node[circle, fill=white, label=above:$\zeta_2'$] 
          (ZP2) at ($(Z2)!.6cm!(Z)$) {};
    \node[circle, fill=white, label={[inner sep=1pt]right:$\zeta_3'$}] 
          (ZP3) at ($(Z3)!.6cm!(Z)$) {};
    \path[densely dotted, very thick] 
          (Z) edge (ZP1) (Z) edge (ZP2) (Z) edge (ZP3);
    \draw[very thick, ->] (ZP1) -- ($(ZP1)!.3cm!(Z)$)
          node [anchor=135, inner sep=0pt] {$v_1'$};
    \draw[very thick, ->] (ZP2) -- ($(ZP2)!.3cm!(Z)$)
          node [anchor=70] {$v_2'$};
    \draw[very thick, ->] (ZP3) -- ($(ZP3)!.3cm!(Z)$)
          node [anchor=30, inner sep=3pt] {$v_3'$};
    \draw[|<->|, thin] ($(Z1)!.2cm!90:(Z)$) -- ($(ZP1)!.2cm!90:(Z)$)
          node [pos=.5, anchor=120] {$r$};
    \draw[|<->|, thin] ($(Z2)!-.3cm!90:(Z)$) -- ($(ZP2)!-.3cm!90:(Z)$)
          node [pos=.5, anchor=90] {$r$};
    \draw[|<->|, thin] ($(Z3)!.2cm!90:(Z)$) -- ($(ZP3)!.2cm!90:(Z)$)
          node [pos=.5, anchor=0] {$r$};
  \end{tikzpicture}
  
  \caption{Illustration of Definition~\ref{def:thickness} and the proof of
    Theorem~\ref{thm:wideopen.bounds}.  The lines of standard thickness
    represent an open star neighborhood $V$ of $\zeta$ such that
    $U = \tau^{-1}(V)$.  The dotted lines represent the smaller open star
    neighborhood $V_r$ of $\zeta$ such that $U_r = \tau^{-1}(V_r)$.  The ends of
    $U$ (resp., $U_r$) are $\zeta_1,\zeta_2,\zeta_3$ (resp.,
    $\zeta_1',\zeta_2',\zeta_3'$).}
  \label{fig:wideopen.bound}
\end{figure}
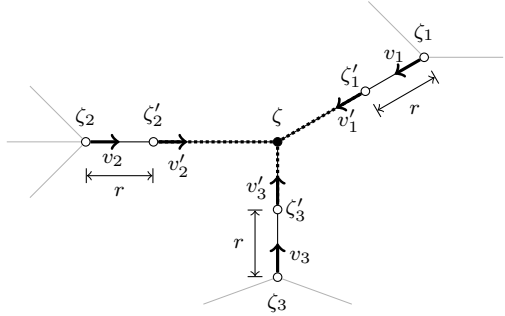

\begin{theorem}\label{thm:wideopen.bounds}
  With the above notation,
  let $\omega\in H^0(X,\Omega^1_{X/\C_p})$ be a nonzero global differential, and
  suppose that $\omega$ is exact on $U$, so $\omega = df$ for an analytic
  function $f$ on $U$.  Then $f$ has at most
  $\deg(\zeta)N_p(r,\,2g-1)$ geometric zeros, counted
  with multiplicity, on $U_r$, where  $\deg(\zeta)$ is the valency of $\zeta$
  in $\Gamma$.  If $U$ is defined with respect to a star neighborhood in a
  skeleton with no genus-zero leaves, then we may replace $2g-1$ by $2g-2$.
\end{theorem}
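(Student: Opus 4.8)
The plan is to combine the three ingredients developed in the preceding sections: the mass formula for wide opens (Proposition~\ref{prop:zeros.on.wide.open}), the Newton-polygon bound on annuli relating the slopes of $f$ to the slopes of $\omega$ (Proposition~\ref{p:annularbounds}), and the combinatorial bound on the slopes of a ``section of the tropical canonical bundle'' (Lemma~\ref{lem:unstable.combinatorics}) together with the fact that $F_0 = -\log\|\omega\|$ is such a section (equation~\eqref{eq:section.of.trop.can.bun}). First I would set up notation: write $U = \tau^{-1}(V)$ where $V$ is an open star neighborhood of $\zeta$ in a skeleton $\Gamma$, with annuli $A_1,\dots,A_d$ (so $d = \deg(\zeta)$) and ends $\zeta_1,\dots,\zeta_d$; choose $r\in(0,a)\cap\Q$ with $a$ the thickness of $U$, so $U_r$ is the smaller wide open with ends $\zeta_1',\dots,\zeta_d'$ obtained by deleting a half-open annulus of logarithmic modulus $r$ from each $A_i$. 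Let $v_i'$ be the tangent direction at $\zeta_i'$ pointing back toward $\zeta$. Since $U_r$ is again a basic wide open and $f$ restricts to an analytic (hence non-pole-having) function on a neighborhood of each $\zeta_i'$, Proposition~\ref{prop:zeros.on.wide.open} gives
\[
\deg\big(\div(f|_{U_r})\big) = \sum_{i=1}^d d_{v_i'} F(\zeta_i'),
\]
where $F = -\log|f|$. So it suffices to bound each term $d_{v_i'}F(\zeta_i')$ by $N_p(r, 2g-1)$ (or $N_p(r,2g-2)$ in the leafless case).

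Next I would fix an index $i$ and apply Proposition~\ref{p:annularbounds} to the annulus $A_i$, with the roles of its endpoints chosen so that $\zeta_+$ is the ``outer'' end $\zeta_i$ (the end of $U$ not in $U$), and the interior point $\xi_{r}$ of Proposition~\ref{p:annularbounds} taken to be $\zeta_i'$. Identifying $A_i \cong \bS(\varpi_i)_+$ of logarithmic modulus $a_i \geq a > r$ as in the statement of that proposition, with parameter normalized so that $\xi_s \to \zeta_i'$ corresponds to $s = r$ measured from the $\zeta_i'$-side and the point $\zeta_i$ corresponds to $s = a_i$, Proposition~\ref{p:annularbounds} yields
\[
d_{v_i'} F(\zeta_i') \leq N_p\big(a_i - r,\, d_{v_i}F_0(\zeta_i)\big),
\]
where $F_0 = -\log\|\omega\|$ and $v_i$ is the tangent direction at $\zeta_i$ pointing into $A_i$. (One needs to check the orientation conventions line up with Figure~\ref{fig:annulus.setup}, but this is bookkeeping; the content is exactly Proposition~\ref{p:annularbounds}.) Here I am using Lemmas~\ref{lem:nonv.sec.annulus} and~\ref{lem:indep.of.skel} so that the norm $\|\omega\|$ appearing in $F_0$ agrees with the quantity computed from the Laurent expansion on $A_i$, regardless of which semistable model we use.

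Finally I would invoke the global structure of $\omega$. Enlarging $\fX$ if necessary so that $\Gamma$ contains the minimal skeleton, the restriction $F_0 = -\log\|\omega\|\big|_\Gamma$ satisfies $\div(F_0) + K_\Gamma \geq 0$ by~\eqref{eq:section.of.trop.can.bun}, because $\omega$ is a regular global $1$-form. Hence Lemma~\ref{lem:unstable.combinatorics}, applied to the vertex-weighted metric graph $\Gamma$ (whose genus is at most $g$, by~\eqref{eq:genus.formula}), gives $|d_{v_i} F_0(\zeta_i)| \leq 2g(\Gamma)-1 \leq 2g-1$ for every tangent direction, and the bound improves to $2g-2$ when $\Gamma$ has no genus-zero leaves. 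Since $N_p(s,N_0)$ is nondecreasing in $N_0$ and nonincreasing in $s$, and since $a_i - r < a_i \le$ (edge lengths are positive) while also $a_i \geq a > r$ so $a_i - r > 0$ — actually the monotonicity I want is $N_p(a_i - r, N_0) \leq N_p(r, N_0)$, which holds because $a_i - r \geq a - r$ is not quite what I need; rather I should note $a_i - r$ could be large, making $N_p$ \emph{smaller}, so in the worst case I bound $N_p(a_i - r, N_0) \le N_p(r, N_0)$ only if $a_i - r \ge r$, which need not hold. The clean fix: since $\xi_r = \zeta_i'$ sits at distance $r$ from $\zeta_i'$... I would instead apply Proposition~\ref{p:annularbounds} choosing the interior point at distance exactly $r$ from the \emph{inner} end, so that "$a - r$" in that proposition's conclusion becomes the full remaining length $\geq a_i - r$, and conservatively the relevant parameter is bounded below by... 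Here lies the one genuine subtlety, and \textbf{the step I expect to be the main obstacle}: correctly matching the parameter in $N_p(a-r,N_0)$ from Proposition~\ref{p:annularbounds} to the claimed $N_p(r, 2g-1)$ in the theorem. Resolving it amounts to observing that the half-open annulus we delete has logarithmic modulus $r$, so $\zeta_i'$ sits at distance exactly $r$ along $A_i$ \emph{from the $\zeta$-side}, hence at distance $a_i - r$ from $\zeta_i$; Proposition~\ref{p:annularbounds} with $\zeta_+=\zeta_i$ and the interior point $\xi_{a_i-r}=\zeta_i'$ then produces $N_p\big(a_i-(a_i-r),\,N_0\big)=N_p(r,N_0)$ exactly. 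With $N_0 \le 2g-1$ (resp.\ $2g-2$) and the monotonicity of $N_p$ in $N_0$, summing over the $d = \deg(\zeta)$ ends gives $\deg\big(\div(f|_{U_r})\big) \leq \deg(\zeta)\, N_p(r,\,2g-1)$ (resp.\ with $2g-2$), which is the assertion. (One should also remark that $f$ being exact on $U$, hence on the smaller $U_r$, is part of the hypothesis and requires no separate argument; when the edge $A_i$ is a loop one applies the loop-edge convention of Figure~\ref{fig:annulus.setup} and the argument is unchanged.)
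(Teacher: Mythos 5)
Your proposal is correct and follows the paper's own argument exactly: the mass formula (Proposition~\ref{prop:zeros.on.wide.open}) applied at the ends $\zeta_i'$ of $U_r$, the annulus bound of Proposition~\ref{p:annularbounds} on each $A_i$, and the slope bound $|d_vF_0(\cdot)|\leq 2g-1$ (resp.\ $2g-2$) coming from Lemma~\ref{lem:unstable.combinatorics} via~\eqref{eq:section.of.trop.can.bun}. One correction to the orientation bookkeeping in your final paragraph: $\zeta_i'$ sits at distance $r$ from the \emph{outer} end $\zeta_i$ (the deleted half-open annulus of logarithmic modulus $r$ is adjacent to $\zeta_i$), not at distance $r$ from the $\zeta$-side as you state; this is, however, exactly consistent with the identification $\zeta_i'=\xi_{a_i-r}$ that you then use, so your conclusion $d_{v_i'}F(\zeta_i')\leq N_p(r,N_0)$ is the right one.
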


\begin{proof}
  Let $F_0 = -\log\|\omega\|$, and let $F = -\log|f|$, as in the statement of
  Proposition~\ref{p:annularbounds}.  As explained in
  Section~\ref{sec:canon.div.graph}, $F_0$ is a section of the tropical canonical
  bundle on $\Gamma$; that is, it satisfies the hypotheses of
  Lemma~\ref{lem:unstable.combinatorics}.  Hence, the absolute value of the slope
  of $F_0$ in any direction at any point of $\Gamma$ is at most $2g-1$, or
  $2g-2$ if $\Gamma$ has no genus-zero leaves.  In the latter case one may
  replace $2g-1$ by $2g-2$ everywhere below.

  Let $d = \deg(\zeta)$,
  let $\zeta_1,\ldots,\zeta_d$ (resp., $\zeta_1',\ldots,\zeta_d'$) be the ends
  of $U$ (resp., $U_r$), and let $v_i$ (resp., $v_i'$) be the tangent direction
  at $\zeta_i$ (resp., $\zeta_i'$) pointing in the direction of the central
  point $\zeta$, as in Figure~\ref{fig:wideopen.bound}.  
  By the above,
  we have $d_{v_i}F(\zeta_i)\leq 2g-1$ for all $i=1,\ldots,d$.  By
  Proposition~\ref{p:annularbounds}, 
  \[ d_{v_i'} F(\zeta_i') \leq N_p(r,\, d_{v_i}F(\zeta_i)) \leq N_p(r,\, 2g-1) \]
  for all $i=1,\ldots,d$.   By Proposition~\ref{prop:zeros.on.wide.open}, then,
  \[ \deg\big(\div(f|_{U_r})\big) = \sum_{i=1}^d d_{v_i'} F(\zeta_i')
  \leq d\,N_p(r,\,2g-1). 
  \]
\end{proof}

The following corollary plays the role
of~\cite[Proposition~7.7]{Stoll:uniformity}, with the slope bound of
Lemma~\ref{lem:unstable.combinatorics}
replacing~\cite[Corollary~6.7]{Stoll:uniformity}.

\begin{corollary}\label{cor:actual.annulus.bound}
  In the setting of Theorem~\ref{thm:wideopen.bounds}, if $U$ is an open annulus,
  then $f$ has at most $2\,N_p(r,\,2g-1)$ zeros on $U_r$.
\end{corollary}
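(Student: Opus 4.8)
The plan is to deduce the corollary directly from Theorem~\ref{thm:wideopen.bounds} by presenting the open annulus $U$ as a basic wide open subdomain whose central point has valency exactly $2$. First I would apply Lemma~\ref{lem:skel.from.annulus} to obtain a skeleton $\Gamma$ of $X$ and an open edge $e$ of $\Gamma$ with $U = \tau^{-1}(e)$. Next I would pick a type-$2$ point $\zeta$ in the interior of $e$ (say, its midpoint), and, after a suitable blow-up of the semistable model as in Section~\ref{sec:basic.wide.open}, arrange that $\zeta$ is a vertex of $\Gamma$; this subdivides $e$ into two open edges $e_1, e_2$ adjacent to $\zeta$. Then $V = \{\zeta\}\djunion e_1\djunion e_2$ is an open star neighborhood of $\zeta$, so $U = \tau^{-1}(V)$ is a basic wide open subdomain with central point $\zeta$ and two annuli $A_1 = \tau^{-1}(e_1)$, $A_2 = \tau^{-1}(e_2)$; in particular $\deg(\zeta) = 2$.

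With this identification in hand, the corollary is an immediate instance of Theorem~\ref{thm:wideopen.bounds}: since $\omega$ is exact on $U$, that theorem bounds the number of geometric zeros of $f$ on $U_r$, counted with multiplicity, by $\deg(\zeta)\,N_p(r,\,2g-1) = 2\,N_p(r,\,2g-1)$. Along the way I would record that the two notions of $U_r$ coincide: with $\zeta$ the midpoint of $e$, the thickness of $U$ in the sense of Definition~\ref{def:thickness} equals $a/2$, where $a$ is the logarithmic modulus of $U$, and for $r\in(0,a/2)\cap\Q$ the wide open $U_r$ obtained by deleting a half-open annulus of modulus $r$ from each of $A_1, A_2$ is precisely the sub-annulus of $U$ shrunk by $r$ at each end.

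I do not expect any genuine obstacle here; the substance lies entirely in Theorem~\ref{thm:wideopen.bounds}. The only points demanding a little care are that the blow-up making $\zeta$ a vertex leaves the genus of the skeleton unchanged --- by the genus formula~\eqref{eq:genus.formula} one has $g(\Gamma) = g$ regardless of the chosen split semistable model --- so that the slope bound $2g-1$ furnished by Lemma~\ref{lem:unstable.combinatorics} (which does not require stability of $\Gamma$) still applies; that exactness of $\omega$ on $U$ restricts to exactness on $A_1$ and $A_2$; and that, although the blow-up may introduce genus-zero leaves elsewhere in $\Gamma$, this is harmless since we only assert the bound with $2g-1$ in place of $2g-2$.
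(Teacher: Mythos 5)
Your proposal is correct and is essentially the argument the paper intends: the remark immediately following the corollary notes that, via Lemma~\ref{lem:skel.from.annulus}, an embedded open annulus is a basic wide open with respect to the star neighborhood of the midpoint $\zeta$ of the corresponding edge, so $\deg(\zeta)=2$ and Theorem~\ref{thm:wideopen.bounds} gives the bound $2\,N_p(r,\,2g-1)$. Your additional checks (that the two notions of $U_r$ agree, that $g(\Gamma)=g$ is unaffected by subdivision, and that possible genus-zero leaves only matter for the sharper $2g-2$ variant) are all accurate and consistent with the paper.
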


By Lemma~\ref{lem:skel.from.annulus}, any open subdomain $U\subset X^\an$ which
is isomorphic to an open annulus has the form $\tau^{-1}(e)$ for an open edge
$e$ of some skeleton $\Gamma$ of $X$.  This is a basic wide open subdomain with
respect to the star neighborhood $e$ of the midpoint $\zeta$ of $e$.  Therefore
Corollary~\ref{cor:actual.annulus.bound} applies to any embedded open annulus.

%%%%%%%%%%%%%%%%%%%%%%%%%%%%%%%%%%%%%%%%%%%%%%%%%%%%%%%%
\section{Uniform Bounds}
\label{sec:uniform-bounds}
%%%%%%%%%%%%%%%%%%%%%%%%%%%%%%%%%%%%%%%%%%%%%%%%%%%%%%%%

In this section we use the following notation:

\begin{longtable}[l]{rl}
  \hspace{1em} $K$ & A local field of characteristic $0$. \\
  $\varpi$ & A uniformizer of $K$. \\
  $k$ & The residue field of $K$. \\
  $p$ & The characteristic of $k$. \\
  $q$ & The number of elements of $k$. \\
  $e$ & The ramification degree of $\sO_K$ over $\Z_p$. \\
  $X$ & A smooth, proper, geometrically connected curve over $K$. \\
  $g$ & The genus of $X$, assumed to be $\geq 2$. \\
  $J$ & The Jacobian of $X$. \\
  $\iota$ & $\colon X\injects J$, an Abel--Jacobi map defined over $K$. \\
\end{longtable}

\noindent
We normalize the valuation on $K$ such that $\val(p) = 1$, and we fix an
isometric embedding $K\injects\C_p$.  Recall that $N_p(\scdot,\scdot)$ is
defined in Definition~\ref{def:Np}.

\subsection{Uniform bounds on $K$-rational points}
In the following theorem we combine Corollary~\ref{cor:actual.annulus.bound}
and~\cite[Proposition~5.3]{Stoll:uniformity} to obtain uniform bounds on the
number of $K$-points of $X$ mapping into a subgroup of $J(K)$ of a given rank
$\rho$.  This generalizes~\cite[Theorem~9.1]{Stoll:uniformity}.

\begin{theorem}\label{thm:with.ss.model}
  Let $G\subset J(K)$ be a subgroup of rank $\rho\leq g-3$.  Then
  \[ \#\iota^{-1}(G) \leq \big(5qg + 6g - 2q - 8\big)\,N_p(1/e,\,2g-1). \]
\end{theorem}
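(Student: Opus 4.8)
The plan is to run the Chabauty--Coleman argument one residue disc and one residue annulus at a time over Stoll's covering of $X(K)$, using the Berkovich--Coleman integral on each piece and transporting the vanishing of the abelian integral onto it. We may assume $\iota^{-1}(G)\neq\emptyset$ and base change to $\C_p$, so that the machinery of Sections~\ref{sec:berkovich-curves}--\ref{sec:bounds-zeroes} applies while the points we must count still lie in $X(K)$. Since $G$ has rank $\rho$ and $\log_{J(\C_p)}$ annihilates torsion, $\log_{J(\C_p)}(G)$ spans a $\C_p$-subspace of $\Lie(J)$ of dimension $\leq\rho$; let $W\subset\Omega^1_{J/\C_p}(J)\cong\Omega^1_{X/\C_p}(X)$ be its annihilator, so $\dim_{\C_p}W\geq g-\rho\geq 3$. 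For any $P,P_1\in\iota^{-1}(G)$ we have $\iota(P)-\iota(P_1)\in G$, and since $\log_{J(\C_p)}$ is a homomorphism, $\presuper\Ab\int_{P_1}^{P}\iota^*\eta=\angles{\log_{J(\C_p)}(\iota(P)-\iota(P_1)),\,\eta}=0$ for every $\eta\in W$.

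By~\cite[Proposition~5.3]{Stoll:uniformity}, applied to a regular $\sO_K$-model of $X$, the set $X(K)$ is the disjoint union of its intersections with finitely many residue discs $D_i\cong\bB(1)_+$ and residue annuli $A_j\cong\bS(\varpi_j)_+$, all defined over $K$ and with logarithmic moduli in $\frac1e\Z=\val(K^\times)$, and (after a short computation with the bounds there) the number of discs plus twice the number of annuli is at most $5qg+6g-2q-8$. Fix a disc $D_i$ meeting $\iota^{-1}(G)$ in a point $P_1$, choose any nonzero $\omega\in W$, and write $\omega|_{D_i}=df$ with $f$ analytic (Poincar\'e lemma). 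Both integration theories satisfy the fundamental theorem of calculus on a disc (condition~(\ref{i:ftc}) of Definition~\ref{d:it}), so $f(P)-f(P_1)=\presuper\Ab\int_{P_1}^{P}\omega=0$ for every $P\in\iota^{-1}(G)\cap D_i$. By Corollary~\ref{cor:disc.bound}, with $N_0\leq 2g-2$ the number of zeros of $\omega$ on $D_i$, the function $f-f(P_1)$ has at most $N_p(r,N_0+1)\leq N_p(r,2g-1)$ zeros on the subdisc where a $K$-rational coordinate centred at $P_1$ has valuation $>r$; every $K$-point of $D_i$ lies in this subdisc for all $r<1/e$, so letting $r\nearrow 1/e$ and using left-continuity of $N_p(\cdot,2g-1)$ (Remark~\ref{R:bounds-on-NpAB}) gives $\#(\iota^{-1}(G)\cap D_i)\leq N_p(1/e,2g-1)$.

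The annulus estimate is the crux, and is where $\rho\leq g-3$ enters. Fix $A_j$ meeting $\iota^{-1}(G)$ in $P_1$; by Lemma~\ref{lem:skel.from.annulus}, $A_j=\tau^{-1}(e)$ for an edge $e$ of some skeleton, so Proposition~\ref{prop:correction.annulus} provides a $\C_p$-linear functional $\eta\mapsto a_j(\eta)$ on $\Omega^1_{X/\C_p}(X)$ measuring the failure of $\presuper\BC\int=\presuper\Ab\int$ on $A_j$. Using $\dim_{\C_p}W\geq 3$, choose $0\neq\omega\in W$ killed by the two linear functionals $\eta\mapsto\Res_{A_j}(\eta)$ and $\eta\mapsto a_j(\eta)$. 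Then $\Res_{A_j}(\omega)=0$ makes $\omega|_{A_j}$ exact with a single-valued analytic antiderivative $f$, so $\presuper\BC\int_{P_1}^{P}\omega=f(P)-f(P_1)$; and $a_j(\omega)=0$ forces $\presuper\BC\int_{P_1}^{P}\omega=\presuper\Ab\int_{P_1}^{P}\omega$, which vanishes on $\iota^{-1}(G)\cap A_j$ by the first paragraph. Hence $f-f(P_1)$ vanishes on $\iota^{-1}(G)\cap A_j$, and Corollary~\ref{cor:actual.annulus.bound} bounds its zeros on the shrunk annulus $(A_j)_r$ by $2N_p(r,2g-1)$; since $K$-points of $A_j$ have coordinate valuation in $\frac1e\Z\cap(0,a_j)\subset(r,a_j-r)$ for every $r<1/e$, letting $r\nearrow 1/e$ gives $\#(\iota^{-1}(G)\cap A_j)\leq 2N_p(1/e,2g-1)$. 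Summing over the covering,
\[ \#\iota^{-1}(G)\ \leq\ \big(\#\{\text{discs}\}+2\,\#\{\text{annuli}\}\big)\,N_p(1/e,2g-1)\ \leq\ (5qg+6g-2q-8)\,N_p(1/e,2g-1). \]
The delicate point is that one must allow a \emph{different} global form $\omega\in W$ on each annulus---there is no reason a single form kills both $\Res_{A_j}$ and $a_j$ simultaneously for all $j$---and it is precisely the inequality $\dim_{\C_p}W\geq 3$, equivalently $\rho\leq g-3$, that makes the intersection of $W$ with those two hyperplanes nonzero.
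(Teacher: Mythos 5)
Your proposal is correct and follows essentially the same route as the paper's proof: the same annihilator subspace $V$ of dimension $\geq g-\rho\geq 3$, the same disc bound via Corollary~\ref{cor:disc.bound}, the same choice on each annulus of a form killing both the residue and the correction functional of Proposition~\ref{prop:correction.annulus}, and the same final count from~\cite[Proposition~5.3]{Stoll:uniformity} (where the paper makes explicit the maximization over Stoll's parameter $t$ that you summarize as ``a short computation''). No gaps.
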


\begin{proof}
  We will use $X^\an$ to denote the $\C_p$-analytic space
  $(X\tensor_K\C_p)^\an$.  Let $V$ be the annihilator in
  $H^0(X_{\C_p},\Omega^1_{X_{\C_p}/\C_p}) = H^0(J_{\C_p},\Omega^1_{J_{\C_p}/\C_p})$ of
  $\log_{J(\C_p)}(G)$, with the notation in Section~\ref{sec:abelian-integral}.
  By the standard Chabauty--Coleman calculation, $V$ has dimension at least
  $g-\rho\geq 3$.  Moreover, for $\omega\in V$ we have $\Abint_x^y\omega = 0$ for
  all $x,y\in\iota^{-1}(G)$.

  Let $B\subset X^\an$ be an open subdomain defined over $K$ which is
  $K$-isomorphic to the open unit disc $\bB(1)_+$.  Suppose that there exists
  $x\in X(K)\cap B$.  There is an isomorphism $T\colon B\isomto\bB(1)_+$ such
  that $x\mapsto 0$ and $B\cap X(K)$ is identified with $\varpi\sO_K$.  In
  particular, $B\cap X(K)\subset B_r\coloneq \{\eta\in B:-\log|T(\eta)| > r\}$
  for all $r < \frac 1e$.  For any nonzero $\omega\in V$, there is a unique
  analytic function $f$ on $B$ such that $df = \omega$ and $f(x) = 0$.  For
  $y\in B(\C_p)$, we have $\Abint_x^y\omega = \BCint_x^y\omega = f(y)$.  Hence
  points of $\iota^{-1}(G)$ in $B$ are zeros of $f$, so
  \begin{equation}\label{eq:chabauty.on.balls}
    \#\iota^{-1}(G)\cap B\leq \lim_{r\nearrow 1/e} N_p(r,\,(2g-2)+1)
    = N_p(1/e,\, 2g-1)
  \end{equation}
  by Corollary~\ref{cor:disc.bound}.

  Now let $A\subset X^\an$ be an open subdomain defined over $K$ which is
  $K$-isomorphic to an open annulus $\bS(\varpi^b)_+$ for $b\geq 1$.  Then
  $A\cap X(K)\subset A_r\coloneq \{\eta\in A:-\log|T(\eta)|\in(r,b/e-r)\}$ for
  all $r < \frac 1e$ as above. Suppose that there exists $x\in A\cap X(K)$.
  This implies that $b\geq 2$.  Choose $\omega\in V$ nonzero which is exact on $A$
  and such that $\BCint_\gamma\omega = \Abint_\gamma\omega$ for all paths
  $\gamma$.  This is possible because both are codimension-one conditions on
  $\omega$: namely, that $\Res(\omega) = 0$ in the notation
  of Section~\ref{sec:de.rham} and that $a(\omega) = 0$ in the notation of
  Proposition~\ref{prop:correction.annulus}.  As above, there is an analytic
  function $f$ on $A$ such that $df = \omega$ and all points of $\iota^{-1}(G)$
  in $A$ are zeros of $f$.  By Corollary~\ref{cor:actual.annulus.bound}, then,
  \begin{equation}\label{eq:chabauty.on.annuli}
    \#\iota^{-1}(G)\cap A\leq \lim_{r\nearrow 1/e} 2N_p(r,\,2g-1) = 2N_p(1/e,\, 2g-1).
  \end{equation}

  By~\cite[Proposition~5.3]{Stoll:uniformity}, there exists
  $t\in\{0,1,2,\ldots,g\}$ such that $X(K)$ is covered by at most
  $(5q+2)(g-1)-3q(t-1)$ embedded open discs and at most embedded $2g-3+t$ open
  annuli, all defined over $K$.  Using~\eqref{eq:chabauty.on.balls}
  and~\eqref{eq:chabauty.on.annuli}, then, we have
  \[\begin{split}
    \#\iota^{-1}(G) &\leq \big((5q+2)(g-1)-3q(t-1)\big)\,N_p(1/e,\,2g-1)
    + 2(2g-3+t)\,N_p(1/e,\,2g-1) \\
    &\leq \big((5q+2)(g-1)-3q(t-1)+4g-6+2t)\big)\,N_p(1/e,\,2g-1) \\
    &\leq (5qg + 6g - 2q - 8)\,N_p(1/e,\,2g-1),
  \end{split}\]
  where the third inequality holds because the quantity is maximized at $t=0$.
\end{proof}

Suppose now that $X$ is defined over a number field $F$.  Let $\fp$ be a prime
of $F$ over $2$, and let $K = F_\fp$.  The number $q$ of elements of the residue
field $k$ of $F_\fp$ and the ramification degree of $F_\fp$ over $\Z_2$ are both
bounded in terms of the degree $[K:\Q]$.  Applying
Theorem~\ref{thm:with.ss.model} with $G = J(F)$ yields
Theorem~\ref{T:uniformity-K-points}, and applying
Theorem~\ref{thm:with.ss.model} with $G = J(F)_\tors$ yields
Theorem~\ref{T:uniformity-K-torsion}. 

\begin{remark}
  It should be possible to refine the bound of Theorem~\ref{thm:with.ss.model}
  to include the rank $\rho$, as in~\cite[Theorem~8.1]{Stoll:uniformity}, although
  it is not obvious how to generalize Corollary~\ref{cor:actual.annulus.bound}
  in this way.
\end{remark}

\subsection{Uniform bounds on geometric torsion packets}
In the following theorem, the Abel--Jacobi map $\iota:X\injects J$ need only be
defined over $\C_p$.  The requirement that $X$ be defined over $K$ and not just
over $\C_p$ is only used to bound from below the minimum length of an edge in
a skeleton $\Gamma$; the resulting bounds depend on $K$ only through its ramification
degree over $\Z_p$.  We set
\[ E(g,p) \coloneq
\begin{cases}
  \#\GSp_{2g}(\F_5) &\text{if } p\neq 5 \\
  \#\GSp_{2g}(\F_7) &\text{if } p = 5.
\end{cases}\]
Note that
\[
\#\GSp_{2g}(\F_{\ell}) = \left({\ell}^{2g}-1\right) \left({\ell}^{2g-2}-1\right) \cdots
\left({\ell}^{2}-1\right) \cdot {\ell}^{g^2}\cdot \left({\ell-1}\right) <
\ell^{2g^2 + g + 1}
\]
for any prime $\ell$.

\begin{theorem}\label{thm:geom.torsion.bound}
  Let $\Gamma$ be the minimal skeleton of $X_{\C_p}$, considered as a
  vertex-weighted metric graph.
  \begin{enumerate}
  \item If $g > 2g(v) + \deg(v)$ for all vertices $v$ of $\Gamma$, then 
    \[ \#\iota^{-1}(J(\C_p)_\tors) \leq (16g^2-12g)\,N_p\big((4eE(g,p))^{-1},\,2g-2\big). \]
  \item If $g > 2g(v) + 2\deg(v) - 2$ for all vertices $v$ of $\Gamma$, then 
    \[ \#\iota^{-1}(J(\C_p)_\tors) \leq (8g-6)\,N_p\big((4eE(g,p))^{-1},\,2g-2\big). \]
  \end{enumerate}
\end{theorem}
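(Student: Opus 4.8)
\emph{Overall plan.} Work over $\C_p$; let $\Gamma=\Gamma_{\min}$ be the minimal skeleton of $X_{\C_p}$ with retraction $\tau$. The plan is the one sketched in Section~\ref{sec:overview}: cover $X^\an$ by finitely many shrunken basic wide open subdomains, produce on each one a nonzero \emph{global regular} differential that is \emph{exact} there, bound the zeros of its antiderivative by Theorem~\ref{thm:wideopen.bounds}, and observe that every torsion point of $\iota$ lies among those zeros via the comparison of Section~\ref{sec:integration}.

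\emph{Step 1: base change and the cover.} First I would pass to a finite extension $K'/K$ over which $X$ has split semistable reduction. By Grothendieck's semistable reduction criterion applied to $J$, together with the equivalence between stable reduction of $X$ and semistable reduction of $J$, and an unramified twist, one may take $K'=K(J[\ell])$ for a prime $\ell\neq p$ with $\ell\in\{5,7\}$; via the symplectic Weil pairing, $[K':K]$ divides $\#\GSp_{2g}(\F_\ell)=E(g,p)$, so the ramification index of $K'$ over $\Z_p$ is at most $eE(g,p)$. Hence every edge of $\Gamma$ has length in $\val((K')^\times)$, so length $\geq (eE(g,p))^{-1}$. Set $r:=(4eE(g,p))^{-1}$. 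For each vertex $v$ of $\Gamma$ (subdividing loop edges once if necessary) let $V_v$ be a maximal star neighborhood of $v$ and $U_v:=\tau^{-1}(V_v)$, a basic wide open with central point $v$. Since $r$ is below half the minimal edge length, the shrunken wide opens $U_{v,r}$ of Definition~\ref{def:thickness} already cover $X^\an$: on the tube over an edge $vw$, points within distance $r$ of $v$ lie in $U_{v,r}$ and all others lie in $U_{w,r}$. If $\iota^{-1}(J(\C_p)_\tors)=\emptyset$ there is nothing to prove; otherwise fix $x_0$ in it as Abel--Jacobi basepoint. By Lemma~\ref{lem:graph.combinatorics}, $\Gamma$ has boundedly many vertices, each of valency $\leq 2g$, and $\leq 4g-3$ edges after loop subdivision.

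\emph{Step 2: the differential on $U_v$, and part (2).} By Coleman's computation (Section~\ref{sec:de.rham}, Theorem~\ref{thm:H1dR}), $\dim_{\C_p}H^1_\dR(U_v)=2g(v)-1+\deg(v)$, independently of the ambient curve; hence the restriction map $H^0(X_{\C_p},\Omega^1_{X_{\C_p}/\C_p})\to H^1_\dR(U_v)$ has kernel of dimension at least $g-(2g(v)-1+\deg(v))$, which is $\geq 2$ under ($\dagger$) and $\geq \deg(v)$ under the hypothesis $g>2g(v)+2\deg(v)-2$ of part~(2). Any nonzero $\omega$ in this kernel is a global regular form with $\omega=df$ on $U_v$ for an analytic $f$; since the minimal skeleton of a curve of genus $\geq 2$ has no genus-zero leaves, $-\log\|\omega\|$ meets the hypotheses of Lemma~\ref{lem:unstable.combinatorics} with slope bound $2g-2$, so Theorem~\ref{thm:wideopen.bounds} bounds the zeros of $f$ on $U_{v,r}$ by $\deg(v)\,N_p(r,2g-2)$. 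That the torsion points of $\iota$ in $U_v$ are among these zeros follows from Proposition~\ref{prop:correction.wide.open}: the abelian integral of any global $\omega$ between two torsion points of $\iota$ vanishes (take a torsion point of $U_v$ as the base of the integral), so such a point is a zero of $f$ once $\omega$ lies in the subspace $H$ of Proposition~\ref{prop:correction.wide.open}, which has codimension $<\deg(v)$. Under part~(2) the kernel has dimension $\geq\deg(v)$, hence meets $H$ nontrivially; choosing $\omega$ there and summing $\deg(v)\,N_p(r,2g-2)$ over all vertices (using $\sum_v\deg(v)=2\#E(\Gamma)\leq 8g-6$) gives the bound $(8g-6)\,N_p((4eE(g,p))^{-1},2g-2)$.

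\emph{Step 3: part (1), and the main obstacle.} Under ($\dagger$) alone the $2$-dimensional kernel need not meet $H$, so I would split $U_v$ into its affinoid part $Y_v$ and its annuli. The tropical correction $\trop\circ\td\iota_v$ factors through $\tau$ (diagram~\eqref{eq:lift.aj.diagram}, Proposition~\ref{prop:ball.retracts.to.point}) and is therefore constant on $Y_v$, so for any nonzero $\omega$ in the kernel every torsion point of $Y_v$ solves $f=\mathrm{const}$ and is counted among the $\leq\deg(v)\,N_p(r,2g-2)$ zeros of $f-\mathrm{const}$ on $U_{v,r}\supseteq Y_v$. For a torsion point in the tube over an edge $e$ at $v$, instead pick $\omega$ in the kernel with vanishing annular period along $e$ (one further linear condition, still solvable as $\dim\geq 2$), so the Berkovich--Coleman and abelian integrals agree on that annulus by Proposition~\ref{prop:correction.annulus}, and Corollary~\ref{cor:actual.annulus.bound} bounds the torsion there by $2\,N_p(r,2g-2)$. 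Summing the vertex and edge contributions supplied by Lemma~\ref{lem:graph.combinatorics}, and crudely estimating $\deg(v)\leq 2g$, yields the asserted constant $16g^2-12g$ times $N_p((4eE(g,p))^{-1},2g-2)$. I expect the crux to be exactly Step~2: obtaining a \emph{global} regular form that is \emph{exact} on $U_v$, which is possible only because ($\dagger$) forces $g>\dim H^1_\dR(U_v)=2g(v)-1+\deg(v)$. The secondary difficulties are organizing the base change so that the single parameter $r=(4eE(g,p))^{-1}$ is at once small enough for the $U_{v,r}$ to cover $X^\an$ (whence the lower bound on edge lengths and the passage to $K(J[\ell])$) and the value at which Theorem~\ref{thm:wideopen.bounds} and Corollary~\ref{cor:actual.annulus.bound} are applied, and keeping track that it is the Berkovich--Coleman integral, not the abelian one, whose zeros Newton polygons control --- harmless in part~(2) but costing a factor $\sim\deg(v)$ under ($\dagger$) alone.
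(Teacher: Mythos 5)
Your overall architecture matches the paper's proof closely: the reduction to a split stable model over $K(J[\ell])$ with $[K':K]\leq E(g,p)$, the subdivision of loop edges and the edge/vertex counts from Lemma~\ref{lem:graph.combinatorics}, the use of Theorem~\ref{thm:H1dR} to find a global regular form exact on $U_v$ under the numerical hypotheses, and your entire treatment of part~(2) via Proposition~\ref{prop:correction.wide.open} are exactly what the paper does. (Two cosmetic points: you must take $r$ \emph{strictly} less than half the minimal edge length for the $U_{v,r}$ to cover $X^\an$ --- with $r$ exactly equal to half an edge length the fibre over the midpoint is missed --- and then pass to the limit using $\lim_{s\nearrow r}N_p(s,N_0)=N_p(r,N_0)$; and the choice of Abel--Jacobi basepoint is harmless by Lemma~\ref{lem:abelian.int.curves}.)

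There is, however, a genuine gap in your part~(1). Your accounting bounds torsion only on $\bigcup_v Y_v\cup\bigcup_e A_{e,r}$: the affinoids via constancy of the tropical correction, and the shrunken annuli via Corollary~\ref{cor:actual.annulus.bound}. That corollary controls zeros of $f$ only on the annulus shrunk by $r$ at \emph{both} ends, so the two collars $A_e\setminus A_{e,r}$ of logarithmic modulus $r$ at the ends of each annulus belong to no region on which you have bounded the torsion, and nothing excludes torsion points there. Shrinking by less does not help: Proposition~\ref{p:annularbounds} inherently requires staying a positive distance from the end at which the slope of $-\log\|\omega\|$ is controlled. The paper's fix is to use the pieces $U_{v,\epsilon,r}=U_{v,r}\cap\tau^{-1}(\{v\}\cup\epsilon)$, each containing the affinoid together with the annulus over $\epsilon$ shrunk only at its far end; these do cover $X^\an$, and on each one the single form $\omega\in V_v$ with $a_\epsilon(\omega)=0$ makes the two integrals agree (the correction is constant on $Y_v$ and, by $a_\epsilon(\omega)=0$, also on $A_\epsilon$). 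Since $\tau^{-1}(\{v\}\cup\epsilon)$ is not a basic wide open when $\deg(v)>1$, the zeros of $f$ there can only be bounded by $\deg\big(\div(f|_{U_{v,r}})\big)\leq 2g\,N_p(r,2g-2)$ via Theorem~\ref{thm:wideopen.bounds}, and summing over the at most $8g-6$ pairs $(v,\epsilon)$ is what produces the quadratic constant $16g^2-12g$. The fact that your count comes out linear in $g$ is the symptom of the missing collars: they are precisely where the extra factor of $2g$ must be paid.
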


Note that the bounds only depend on $p$ through the correction factor
$N_p(\scdot,\scdot)$, which can be removed by recalling that
$N_2(\scdot,\scdot)\geq N_p(\scdot,\scdot)$.

\begin{proof}
  First suppose that $X$ admits a split stable model $\fX$ over $\sO_{K}$, so
  that $\Gamma = \Gamma_\fX$.  The hypotheses imply that $X$ does not have good
  reduction, namely, that $\Gamma$ is not a point.  Let $\Gamma'$ denote the
  metric graph obtained from $\Gamma$ by adding a vertex at the midpoint of each
  loop edge.  Since $\Gamma$ is stable, $2g(v) + \deg(v)\geq 3$ for all vertices
  $v$ of $\Gamma$, and $2g(v)+\deg(v)=2$ if $v$ is a midpoint of a loop edge, so
  $g > 2g(v)+\deg(v)$ for all vertices of $\Gamma'$.  Note that $\Gamma'$ has at
  most $3g-2$ vertices and $4g-3$ edges by Lemma~\ref{lem:graph.combinatorics}.
  Since our model $\fX$ is split, each edge of $\Gamma'$ has length at least
  $1/2e$.

  For each vertex $v$ of $\Gamma'$, let $S_v$ denote the union of $v$ and all
  open edges adjacent to $v$, and let $U_v = \tau^{-1}(S_v)$.  Then $U_v$ is a
  basic wide open subdomain of $X^\an$ of thickness
  (Definition~\ref{def:thickness}) at least $1/2e$.  By Theorem~\ref{thm:H1dR}
  the space $V_v\subset H^0(X_{\C_p},\Omega^1_{X_{\C_p}/\C_p})$ of $1$-forms
  $\omega$ which are exact on $U_v$ has dimension at least
  \[ \dim(V_v) \geq g - (2g(v)-1+\deg(v)) \geq 2. \]
  Let $\epsilon$ be an open edge of $\Gamma'$ adjacent to $v$, and let
  $U_{v,\epsilon} = \tau^{-1}(\{v\}\cup \epsilon)\subset U_v$, the union of the
  underlying affinoid of $U_v$ with the open annulus $\tau^{-1}(\epsilon)$.  By
  Proposition~\ref{prop:correction.annulus}, there exists a nonzero differential
  $\omega\in V_v$ such that $\BCint_x^y\omega = \Abint_x^y\omega$ for all
  $x,y\in U_{v,\epsilon}(\C_p)$.

  Suppose that there exists $x_0\in U_{v,\epsilon}(\C_p)$ such that
  $\iota(x_0)\in J(\C_p)_\tors$.  Since $\omega$ is exact, we have $\omega = df$
  for an analytic function $f$ on $U_{v}$ such that $f(x_0) = 0$.  Since
  \[ f(y) = \BCint_{x_0}^y\omega = \Abint_{x_0}^y\omega =
  \angles{\log_{J(\C_p)}(\iota(y)-\iota(x_0)),\,\omega} \]
  for $y\in U_{v,\epsilon}(\C_p)$ and since $\log_{J(\C_p)}$ vanishes on
  $J(\C_p)_\tors$, we have $f(y) = 0$ for all
  $y\in\iota^{-1}(J(\C_p)_\tors)\cap U_{v,\epsilon}$.  Choose $r\in(0,1/4e)$, define
  $U_{v,r}\subset U_v$ as in Section~\ref{sec:bounding.zeros}, and let
  $U_{v,\epsilon,r} = U_{v,r}\cap U_{v,\epsilon}$.  Then
  \begin{equation}\label{eq:Uver.bound}
    \#\big(\iota^{-1}(J(\C_p)_\tors)\cap U_{v,\epsilon,r}\big)
    \leq \deg(\div(f|_{U_{v,\epsilon,r}})) \leq \deg(\div(f|_{U_{v,r}}))
    \leq 2g\, N_p(r,\,2g-2),
  \end{equation}
  where we have used Theorem~\ref{thm:wideopen.bounds} and
  Lemma~\ref{lem:graph.combinatorics}(3) for the final inequality.
  We have $X^\an = \bigcup_{v,\epsilon} U_{v,\epsilon,r}$, where the union is
  taken over all vertices $v$ of $\Gamma'$ and all open edges $\epsilon$
  adjacent to $v$, and where $r\in(0,1/4e)$ (recall that $1/4e$ is half the
  minimum length of an edge).  The number of pairs $(v,\epsilon)$ consisting of a
  vertex and an adjacent edge is equal to twice the number of edges, which is at
  most $8g-6$.  Therefore,
  \[ \#\iota^{-1}(J(\C_p)_\tors) \leq (8g-6)(2g)N_p(r,\,2g-2) \]
  for all $r < 1/4e$.  Taking the limit as $r\nearrow 1/4e$ yields assertion~(1)
  in this case.

  Now suppose that $g > 2g(v) + 2\deg(v) - 2$ for all vertices $v$ of $\Gamma$
  (hence of $\Gamma'$).  Then $\dim(V_v)\geq \deg(v)$, so by
  Proposition~\ref{prop:correction.wide.open}, there exists a nonzero
  differential $\omega\in V_v$ such that $\BCint_x^y\omega = \Abint_x^y\omega$
  for all $x,y\in U_{v}(\C_p)$.  Proceeding as above, we see that
  \[ \#\big(\iota^{-1}(J(\C_p)_\tors)\cap U_{v,r}\big) \leq
  \deg(v)N_p(r,\,2g-2) \]
  for all $r\in(0,1/4e)$.  Using the facts that $X^\an = \bigcup_v U_{v,r}$
  and that $\sum_v\deg(v)$ is twice the number of edges in $\Gamma'$, we have
  \[ \#\iota^{-1}(J(\C_p)_\tors) \leq
  \sum_v \deg(v)N_p(r,\,2g-2) \leq (8g-6)N_p(r,\,2g-2).
  \]
  Taking the limit as $r\nearrow 1/4e$ completes the proof in this case.

  Finally, we reduce to the case when $X$ admits a split stable model over $K$ by
  making a potentially ramified field extension $K'/K$.  
  By \cite[Theorem~2.4]{DeligneMumford:irreducibility}, $X$ admits a stable
  model over $\sO_K$ if and only if its Jacobian $J$ has stable reduction, that is, if
  and only if the connected component of the special fiber of the N\'eron model
  of $J_K$ is semiabelian.
  By~\cite[Corollary~6.3]{silverbergZarhin:semistableReduction}, for any prime
  $\ell \geq 5$ which is coprime to $p$, if $K'' = K(J[\ell])$, then $J_{K''}$
  admits a stable model.  Since $J$ is principally polarized,
  $\Gal(K''/K) \subset \GSp_{2g}(\F_{\ell})$.
  Choosing $\ell = 5$ or, if $p = 5$, $\ell = 7$, gives
  $[K'': K] \leq E(g,p)$.  In particular, the ramification degree of $K''/K$ is at
  most $E(g,p)$, so the ramification degree of $K''/\Z_p$ is at most $eE(g,p)$.
  The stable model of $X_{K''}$ may not be split, but it can be made split by
  trivializing the action of $\Gal(\bar k/k)$ on the geometric skeleton
  $\Gamma$.  This results in an unramified extension $K'$ of $K''$.  Now we 
  apply the above argument to the curve $X_{K'}$.
\end{proof}

\begin{remark}
  The hypotheses of Theorem~\ref{thm:geom.torsion.bound}(1) are satisfied if $X$
  is a Mumford curve of genus $g$ and all vertices of $\Gamma$ have valency at
  most $g-1$, namely, if $g\geq 4$ and $\Gamma$ is trivalent.  The hypotheses of
  Theorem~\ref{thm:geom.torsion.bound}(2) are satisfied if $X$ is a Mumford
  curve of genus $g$ and all vertices of $\Gamma$ have valency at most
  $g/2$, namely, if $g\geq 6$ and $\Gamma$ is trivalent.
\end{remark}

%%%%%%%%%%%%%%%%%%%%%%%%%%%%%%%%%%%%%%%%%%%%%%%%%%%%%%%%
\subsection*{Acknowledgements}
%%%%%%%%%%%%%%%%%%%%%%%%%%%%%%%%%%%%%%%%%%%%%%%%%%%%%%%%

This work clearly owes a debt to Stoll~\cite{Stoll:uniformity}, who, in addition
to being an inspiration, had many helpful comments on an early draft of this
paper.  The authors would also like to thank Matt Baker, Vladimir Berkovich,
Kiran Kedlaya, Dino Lorenzini, Andrew Obus, Jennifer Park, Bjorn Poonen, Alice
Silverberg, and Yuri Zarhin for helpful discussions; Matt Baker and Walter
Gubler for further comments on an early draft; and Christian Vilsmeier for
important corrections.  The authors thank the referees for a number of comments
and corrections.  Katz was supported by a National Sciences and Energy Research
Council Discovery grant.  Rabinoff was sponsored by a National Security Agency
Young Investigator grant.
Zureick-Brown was supported by a National Security Agency Young Investigator
grant.

\bibliography{KRZB_tropical_uniformity}

\def\cprime{$'$}
\providecommand{\bysame}{\leavevmode\hbox to3em{\hrulefill}\thinspace}
\providecommand{\MR}{\relax\ifhmode\unskip\space\fi MR }
% \MRhref is called by the amsart/book/proc definition of \MR.
\providecommand{\MRhref}[2]{%
  \href{http://www.ams.org/mathscinet-getitem?mr=#1}{#2}
}
\providecommand{\href}[2]{#2}
\begin{thebibliography}{ABBR15}

\bibitem[ABBR15]{abbr14:lifting_harmonic_morphism_I}
Omid Amini, Matthew Baker, Erwan Brugall\'e, and Joseph Rabinoff, \emph{Lifting
  harmonic morphisms {I}: metrized complexes and {B}erkovich skeleta}, Res.
  Math. Sci. \textbf{2} (2015), Art. 7, 67. \MR{3375652}

\bibitem[Bak08]{Baker:specialization}
Matthew Baker, \emph{Specialization of linear systems from curves to graphs},
  Algebra Number Theory \textbf{2} (2008), no.~6, 613--653, With an appendix by
  Brian Conrad.

\bibitem[BBK10]{BBK}
Jennifer~S. Balakrishnan, Robert~W. Bradshaw, and Kiran~S. Kedlaya,
  \emph{Explicit {C}oleman integration for hyperelliptic curves}, Algorithmic
  number theory, Lecture Notes in Comput. Sci., vol. 6197, Springer, Berlin,
  2010, pp.~16--31.

\bibitem[Ber90]{berkovic90:analytic_geometry}
Vladimir~G. Berkovich, \emph{Spectral theory and analytic geometry over
  non-{A}rchimedean fields}, Mathematical Surveys and Monographs, vol.~33,
  American Mathematical Society, Providence, RI, 1990.

\bibitem[Ber07]{Berkovich:integrationOfOne}
\bysame, \emph{Integration of one-forms on {$p$}-adic analytic spaces}, Annals
  of Mathematics Studies, vol. 162, Princeton University Press, Princeton, NJ,
  2007.

\bibitem[BF11]{baker_faber11:metric_properties_tropical_abel_jacobi}
Matthew Baker and Xander Faber, \emph{Metric properties of the tropical
  {A}bel-{J}acobi map}, J. Algebraic Combin. \textbf{33} (2011), no.~3,
  349--381.

\bibitem[BG13]{bhargava2012average}
Manjul Bhargava and Benedict~H. Gross, \emph{The average size of the 2-{S}elmer
  group of {J}acobians of hyperelliptic curves having a rational {W}eierstrass
  point}, Automorphic representations and {$L$}-functions, Tata Inst. Fundam.
  Res. Stud. Math., vol.~22, Tata Inst. Fund. Res., Mumbai, 2013, pp.~23--91.
  \MR{3156850}

\bibitem[BGW13]{bhargava2013pencils}
Manjul Bhargava, Benedict~H Gross, and Xiaoheng Wang, \emph{Pencils of quadrics
  and the arithmetic of hyperelliptic curves}, preprint arXiv:1310.7692. To
  appear in J. Amer. Math. Soc. (2013).

\bibitem[Bha13]{bhargava2013most}
Manjul Bhargava, \emph{Most hyperelliptic curves over {$\Q$} have no rational
  points}, preprint arXiv:1308.0395 (2013).

\bibitem[BL84]{bosch_lutkeboh84:stable_reduction_uniformiz_abelian_varieties_II}
Siegfried Bosch and Werner L{\"u}tkebohmert, \emph{Stable reduction and
  uniformization of abelian varieties. {II}}, Invent. Math. \textbf{78} (1984),
  no.~2, 257--297.

\bibitem[BL91]{bosch_lutkeboh91:degenerat_abelian_varieties}
\bysame, \emph{Degenerating abelian varieties}, Topology \textbf{30} (1991),
  no.~4, 653--698.

\bibitem[BL93]{bosch_lutkeboh93:formal_rigid_geometry_I}
\bysame, \emph{Formal and rigid geometry. {I}. {R}igid spaces}, Math. Ann.
  \textbf{295} (1993), no.~2, 291--317.

\bibitem[BPR13]{baker_payne_rabinoff13:analytic_curves}
Matthew Baker, Sam Payne, and Joseph Rabinoff, \emph{On the structure of
  nonarchimedean analytic curves}, Tropical and {N}on-{A}rchimedean {G}eometry,
  Contemp. Math., vol. 605, Amer. Math. Soc., Providence, RI, 2013,
  pp.~93--121.

\bibitem[BR10]{baker_rumely10:book}
Matthew Baker and Robert Rumely, \emph{Potential theory and dynamics on the
  {B}erkovich projective line}, Mathematical Surveys and Monographs, vol. 159,
  American Mathematical Society, Providence, RI, 2010.

\bibitem[BR14]{baker_rabinoff}
Matthew Baker and Joseph Rabinoff, \emph{The skeleton of the {J}acobian, the
  {J}acobian of the skeleton, and lifting meromorphic functions from tropical
  to algebraic curves}, Int. Math. Res. Not. IMRN (2014).

\bibitem[Bre00]{Breuil:integration}
Christophe Breuil, \emph{Int\'egration sur les vari\'et\'es {$p$}-adiques
  (d'apr\`es {C}oleman, {C}olmez)}, Ast\'erisque (2000), no.~266, Exp.\ No.\
  860, 5, 319--350, S{\'e}minaire Bourbaki, Vol. 1998/99.

\bibitem[BS13]{bhargava2013average}
Manjul Bhargava and Arul Shankar, \emph{The average size of the 5-{S}elmer
  group of elliptic curves is 6, and the average rank is less than 1}, preprint
  arXiv:1312.7859 (2013).

\bibitem[Bui96]{buium:p-adic-jets}
Alexandru Buium, \emph{Geometry of {$p$}-jets}, Duke Math. J. \textbf{82}
  (1996), no.~2, 349--367.

\bibitem[BZ]{besserzerbes}
Amnon Besser and Sarah Zerbes, \emph{Vologodsky integration on semi-stable
  curves}, in preparation.

\bibitem[CdS88]{CdS}
Robert Coleman and Ehud de~Shalit, \emph{{$p$}-adic regulators on curves and
  special values of {$p$}-adic {$L$}-functions}, Invent. Math. \textbf{93}
  (1988), no.~2, 239--266.

\bibitem[Cha41]{chabauty41-sur-les-points}
Claude Chabauty, \emph{Sur les points rationnels des courbes alg\'ebriques de
  genre sup\'erieur \`a l'unit\'e}, C. R. Acad. Sci. Paris \textbf{212} (1941),
  882--885.

\bibitem[CHM97]{CaporasoHM:uniformity}
Lucia Caporaso, Joe Harris, and Barry Mazur, \emph{Uniformity of rational
  points}, J. Amer. Math. Soc. \textbf{10} (1997), no.~1, 1--35.

\bibitem[Chr13]{christensen:thesis}
Christian Christensen, \emph{Erste {C}hernform und {C}hambert--{L}oir {M}a\ss e
  auf dem {Q}uadrat einer {T}ate-{K}urve.}, Ph.D. thesis, Universit\"at
  T\"ubingen, 2013, available at \url{tobias-lib.uni-tuebingen.de}.

\bibitem[CI99]{ColemanIovita}
Robert Coleman and Adrian Iovita, \emph{The {F}robenius and monodromy operators
  for curves and abelian varieties}, Duke Math. J. \textbf{97} (1999), no.~1,
  171--215.

\bibitem[CL06]{chambertloir06:mesures_equidist}
Antoine Chambert-Loir, \emph{Mesures et \'equidistribution sur les espaces de
  {B}erkovich}, J. Reine Angew. Math. \textbf{595} (2006), 215--235.

\bibitem[CL11]{chambertloir11:heights_measures}
\bysame, \emph{Heights and measures on analytic spaces. {A} survey of recent
  results, and some remarks}, Motivic integration and its interactions with
  model theory and non-{A}rchimedean geometry. {V}olume {II}, London Math. Soc.
  Lecture Note Ser., vol. 384, Cambridge Univ. Press, Cambridge, 2011,
  pp.~1--50.

\bibitem[Col85]{Coleman:effectiveChabauty}
Robert~F. Coleman, \emph{Effective {C}habauty}, Duke Math. J. \textbf{52}
  (1985), no.~3, 765--770.

\bibitem[Col87]{coleman:ramified_torsion_curves}
\bysame, \emph{Ramified torsion points on curves}, Duke Math. J. \textbf{54}
  (1987), no.~2, 615--640.

\bibitem[Col89]{ColemanReciprocity}
\bysame, \emph{Reciprocity laws on curves}, Compositio Math. \textbf{72}
  (1989), no.~2, 205--235.

\bibitem[Col92]{Colmez:periodes}
Pierre Colmez, \emph{P\'eriodes {$p$}-adiques des vari\'et\'es ab\'eliennes},
  Math. Ann. \textbf{292} (1992), no.~4, 629--644.

\bibitem[CTT16]{cohen_temkin_trushin14:differentials}
Adina Cohen, Michael Temkin, and Dmitri Trushin, \emph{Morphisms of {B}erkovich
  curves and the different function}, Adv. Math. \textbf{303} (2016), 800--858.
  \MR{3552539}

\bibitem[DM69]{DeligneMumford:irreducibility}
Pierre Deligne and David Mumford, \emph{The irreducibility of the space of
  curves of given genus}, Inst. Hautes \'Etudes Sci. Publ. Math. (1969),
  no.~36, 75--109.

\bibitem[dS06]{deShalit:versus}
Ehud de~Shalit, \emph{Coleman integration versus {S}chneider integration on
  semistable curves}, Doc. Math. (2006), no.~Extra Vol., 325--334 (electronic).

\bibitem[Hin88]{hindry:autour-conjecture-manin-mumford}
Marc Hindry, \emph{Autour d'une conjecture de {S}erge {L}ang}, Invent. Math.
  \textbf{94} (1988), no.~3, 575--603.

\bibitem[Ho14]{ho:howManySurvey}
Wei Ho, \emph{How many rational points does a random curve have?}, Bull. Amer.
  Math. Soc. (N.S.) \textbf{51} (2014), no.~1, 27--52.

\bibitem[KZB13]{katz_dzb13:chabauty_coleman}
Eric Katz and David Zureick-Brown, \emph{The {C}habauty-{C}oleman bound at a
  prime of bad reduction and {C}lifford bounds for geometric rank functions},
  Compos. Math. \textbf{149} (2013), no.~11, 1818--1838.

\bibitem[Liu02]{liu:algebraicGeometry}
Qing Liu, \emph{Algebraic geometry and arithmetic curves}, Oxford Graduate
  Texts in Mathematics, vol.~6, Oxford University Press, Oxford, 2002,
  Translated from the French by Reinie Ern{\'e}, Oxford Science Publications.

\bibitem[LT02]{LorenziniT:thue}
Dino Lorenzini and Thomas~J. Tucker, \emph{Thue equations and the method of
  {C}habauty-{C}oleman}, Invent. Math. \textbf{148} (2002), no.~1, 47--77.

\bibitem[MP12]{McCallumP:chabautySurvey}
William McCallum and Bjorn Poonen, \emph{The method of {C}habauty and
  {C}oleman}, Explicit methods in number theory, Panor. Synth\`eses, vol.~36,
  Soc. Math. France, Paris, 2012, pp.~99--117.

\bibitem[MZ08]{mikhalkin_zharkov08:tropical_curves}
Grigory Mikhalkin and Ilia Zharkov, \emph{Tropical curves, their {J}acobians
  and theta functions}, Curves and abelian varieties, Contemp. Math., vol. 465,
  Amer. Math. Soc., Providence, RI, 2008, pp.~203--230.

\bibitem[Par14]{park:tropicalSymmetricChabauty}
Jennifer Mun~Young Park, \emph{Effective {C}habauty for symmetric powers of
  curves}, Ph.D. thesis, Massachusetts Institute of Technology, 2014.

\bibitem[PS14]{PoonenS:mostOddDegree}
Bjorn Poonen and Michael Stoll, \emph{Most odd degree hyperelliptic curves have
  only one rational point}, Ann. of Math. (2) \textbf{180} (2014), no.~3,
  1137--1166.

\bibitem[PZ08]{pilazannier}
Jonathan Pila and Umberto Zannier, \emph{Rational points in periodic analytic
  sets and the {M}anin-{M}umford conjecture}, Atti Accad. Naz. Lincei Cl. Sci.
  Fis. Mat. Natur. Rend. Lincei (9) Mat. Appl. \textbf{19} (2008), no.~2,
  149--162.

\bibitem[Ray83]{raynaud:maninMumford}
Michel Raynaud, \emph{Courbes sur une vari\'et\'e ab\'elienne et points de
  torsion}, Invent. Math. \textbf{71} (1983), no.~1, 207--233.

\bibitem[Sik09]{siksek:symmetricPowerChabauty}
Samir Siksek, \emph{Chabauty for symmetric powers of curves}, Algebra Number
  Theory \textbf{3} (2009), no.~2, 209--236.

\bibitem[Sto06]{stoll06:rational_points}
Michael Stoll, \emph{Independence of rational points on twists of a given
  curve}, Compos. Math. \textbf{142} (2006), no.~5, 1201--1214.

\bibitem[Sto13]{Stoll:uniformity}
\bysame, \emph{Uniform bounds for the number of rational points on
  hyperelliptic curves of small {M}ordell-{W}eil rank}, preprint
  arXiv:1307.1773. To appear in the Journal of the European Math Society
  (2013).

\bibitem[SW13]{shankar2013average}
Arul Shankar and Xiaoheng Wang, \emph{Average size of the 2-{S}elmer group of
  {J}acobians of monic even hyperelliptic curves}, preprint arXiv:1307.3531
  (2013).

\bibitem[SZ95]{silverbergZarhin:semistableReduction}
Alice Silverberg and Yuri~G. Zarhin, \emph{Semistable reduction and torsion
  subgroups of abelian varieties}, Ann. Inst. Fourier (Grenoble) \textbf{45}
  (1995), no.~2, 403--420.

\bibitem[Tem14]{temkin:pluriforms}
Michael Temkin, \emph{Metrization of differential pluriforms on {B}erkovich
  analytic spaces}, preprint arXiv:1410.3079 (2014).

\bibitem[Tho15]{Thorne:E6-arithmetic}
Jack~A. Thorne, \emph{{$E_6$} and the arithmetic of a family of
  non-hyperelliptic curves of genus 3}, Forum Math. Pi \textbf{3} (2015), e1,
  41.

\bibitem[Thu05]{thuillier05:thesis}
A.~Thuillier, \emph{Th{\'e}orie du potentiel sur les courbes en
  g{\'e}om{\'e}trie analytique non archim{\'e}dienne. {A}pplications {\`a} la
  th{\'e}orie d'{A}rakelov}, Ph.D. thesis, University of Rennes, 2005, Preprint
  available at
  \url{http://tel.archives-ouvertes.fr/docs/00/04/87/50/PDF/tel-00010990.pdf}.

\bibitem[Ull98]{ullmo:positivite-manin-mumford}
Emmanuel Ullmo, \emph{Positivit\'e et discr\'etion des points alg\'ebriques des
  courbes}, Ann. of Math. (2) \textbf{147} (1998), no.~1, 167--179.

\bibitem[Vol03]{Vologodsky:hodge}
Vadim Vologodsky, \emph{Hodge structure on the fundamental group and its
  application to {$p$}-adic integration}, Mosc. Math. J. \textbf{3} (2003),
  no.~1, 205--247, 260.

\bibitem[Zar96]{Zarhin:abelian-integrals}
Yuri~G. Zarhin, \emph{{$p$}-adic abelian integrals and commutative {L}ie
  groups}, J. Math. Sci. \textbf{81} (1996), no.~3, 2744--2750, Algebraic
  geometry, 4.

\bibitem[Zha93]{zhang93:admissible_pairing}
Shouwu Zhang, \emph{Admissible pairing on a curve}, Invent. Math. \textbf{112}
  (1993), no.~1, 171--193.

\end{thebibliography}
\bibliographystyle{amsalpha}

\end{document}